\documentclass[11pt]{article}

\usepackage{amsmath, amsthm, amssymb, tikz, hyperref, enumerate}
\usepackage{amsfonts,shuffle}
\usepackage{fullpage}
\usepackage[enableskew,vcentermath]{youngtab}
\usepackage{multirow}

\newcommand\beq{\begin{equation}}
\newcommand\eeq{\end{equation}}
\newcommand\bce{\begin{center}}
\newcommand\ece{\end{center}}
\newcommand\bea{\begin{eqnarray}}
\newcommand\eea{\end{eqnarray}}
\newcommand\ba{\begin{array}}
\newcommand\ea{\end{array}}
\newcommand\ben{\begin{enumerate}}
\newcommand\een{\end{enumerate}}
\newcommand\bit{\begin{itemize}}
\newcommand\eit{\end{itemize}}
\newcommand\brr{\begin{array}}
\newcommand\err{\end{array}}
\newcommand\bt{\begin{tabular}}
\newcommand\et{\end{tabular}}

\renewcommand\S{{\mathcal S}}

\newcommand\A{{\mathcal A}}
\newcommand\LL{{\mathcal L}}

\newcommand\G{{\mathcal G}}
\newcommand\LLL{{\mathcal Y}}
\newcommand\Sh{\G^{+^2}}

\DeclareMathOperator\SYT{SYT}

\DeclareMathOperator\st{st}

\DeclareMathOperator\shape{shape}

\newcommand{\bbz}{\mathbb{Z}}

\DeclareMathOperator\Des{Des}
\DeclareMathOperator\des{des}
\DeclareMathOperator\cdes{cdes}
\DeclareMathOperator\cDes{cDes}
\newcommand\uu{\mathbf{u}}
\newcommand\vv{\mathbf{v}}
\newcommand{\ww}{\mathbf{w}}

\newcommand{\ZZ}{\mathbb{Z}}

\newcommand{\CC}{\mathbb{C}}

\newcommand{\BBB}{{\mathcal{B}}}
\newcommand{\Comp}{\operatorname{Comp}}
\newcommand{\ch}{\operatorname{ch}}

\newcommand{\Q}{{\mathcal Q}}

\theoremstyle{plain}
\newtheorem{theorem}{Theorem}[section]
\newtheorem{proposition}[theorem]{Proposition}
\newtheorem{lemma}[theorem]{Lemma}
\newtheorem{corollary}[theorem]{Corollary}
\newtheorem{conjecture}[theorem]{Conjecture}
\newtheorem{question}[theorem]{Question}
\newtheorem{problem}[theorem]{Problem}

\theoremstyle{definition}
\newtheorem{defn}[theorem]{Definition}
\newtheorem*{ex}{Example}

\theoremstyle{remark}

\newtheorem{remark}[theorem]{Remark}
\newtheorem{observation}[theorem]{Observation}

\numberwithin{figure}{section}

\title{Schur-positive sets of permutations via products and grid classes}

\author{Sergi Elizalde~\thanks{Department of Mathematics, Dartmouth College, Hanover, NH 03755, USA. {\tt sergi.elizalde@dartmouth.edu}.
} \and Yuval
Roichman~\thanks{Department of Mathematics, Bar-Ilan University,
 Ramat-Gan 52900, Israel.  {\tt yuvalr@math.biu.ac.il}.}}

\date{}

\begin{document}

\maketitle

\begin{abstract}
Characterizing sets of permutations whose
associated quasisymmetric function is symmetric and Schur-positive
is a long-standing problem in algebraic combinatorics.
In this paper we present a general method to construct Schur-positive sets and multisets,
based on geometric grid classes and the product operation. Our approach produces
many new instances of Schur-positive sets, and provides a broad framework
that explains the existence of known such sets that until now were sporadic cases.
\end{abstract}



\tableofcontents

\section{Introduction}

\subsection{Background}

Given any subset $A$ of the symmetric group $\S_n$, define the
quasisymmetric function
\[
\Q(A) := \sum\limits_{\pi\in A} F_{n,\Des(\pi)},
\]
where $\Des(\pi):=\{i:\ \pi(i)>\pi(i+1)\}$ is the descent set of
$\pi$ and $F_{n,\Des(\pi)}$ is Gessel's {\em fundamental
quasisymmetric function} (see Section~\ref{sec:prel_quasi} for
more background and definitions). The quasisymmetric function
$\Q(A)$ was introduced in~\cite{Gessel}, where Gessel was
concerned with the case when $A$ is the set of linear extensions
of a labeled poset\footnote{Gessel was motivated by a well-known
conjecture 
of Stanley~\cite[III, Ch. 21]{Stanley_thesis}, which he
reformulates as follows:
if $A$ is the set of linear extensions of a labeled
poset $P$, then $\Q(A)$ is
symmetric if and only if $P$ is isomorphic to the poset determined
by a skew semistandard Young tableau.}.
The following long-standing problem was first posed
in~\cite{Gessel-Reutenauer}.

\begin{problem}\label{prob:symmetric}
For which subsets of permutations $A\subseteq \S_n$ is $\Q(A)$ symmetric?
\end{problem}

A symmetric function is called {\it Schur-positive} if all
coefficients in its expansion in the Schur basis are nonnegative.
The problem of determining whether a given symmetric function is
Schur-positive is a major problem in contemporary algebraic
combinatorics~\cite{Stanley_problems}.

By analogy, a set (or, more generally, a multiset) $A$ of
permutations in $\S_n$ is called {\it Schur-positive} if $\Q(A)$
is symmetric and Schur-positive. 
Classical examples of Schur-positive sets of permutations include
inverse descent classes and Knuth classes~\cite{Gessel}, conjugacy
classes~\cite[Theorem 5.5]{Gessel-Reutenauer} and permutations of
fixed inversion number~\cite[Prop. 9.5]{Adin-R}.

An exotic example of a Schur-positive
set, different from all the above ones, was recently found: the set of arc permutations, which
may be characterized as those
avoiding the patterns $\{\sigma\in \S_4:\
|\sigma(1)-\sigma(2)|=2\}$. A bijective proof of its Schur-positivity is given in~\cite{ER1}.
Inspired by this example, Woo and Sagan raised the
problem of finding other Schur-positive pattern-avoiding
sets~\cite{Sagan_talk}. Our goal in this paper is to provide a conceptual approach that
explains the existing results and produces new
examples of Schur-positive pattern-avoiding sets of permutations.
An important tool in our approach will be to consider products of geometric
grid classes.

A {\it geometric grid class} consists of those permutations that
can be drawn on a specified set of line segments of slope $\pm1$,
whose locations are determined by the positions of the
corresponding entries in a matrix $M$ with entries in
$\{0,1,-1\}$. An example of a matrix and its corresponding line segments is given in Figure~\ref{fig:grid}.
Let $\G_n(M)$ be the set of permutations in
$\S_n$ that can be obtained by placing $n$ dots on the segments in
such a way that there are no two dots on the same vertical or
horizontal line, labeling the dots with $1,2,\dots,n$ from bottom to top,
and then reading them from left to right.
Geometric grid classes may be characterized by a finite set of
forbidden patterns~\cite{AABRV}. It follows from \cite[Theorem
5.5]{Gessel-Reutenauer} together with elementary combinatorial
arguments that all one-column grid classes, as well as layered and
colayered permutation classes are Schur-positive; see
Section~\ref{sec:elementary} below.

\begin{figure}[h]
\centering
\begin{tikzpicture}[scale=0.8]
\draw (-3.5,1.5) node {$M=\left(
\begin{array}{cc}
0&1\\ -1&0 \\ 1& -1
\end{array}\right)
$};
\draw (0,0) rectangle (4,3);
\draw[dotted] (0,2)--(4,2); \draw[dotted] (0,1)--(4,1);
\draw[dotted] (2,0)--(2,3); \draw[thick] (0,2)--(4,0);
\draw[thick] (2,2)--(4,3); \draw[thick] (0,0)--(2,1);
\end{tikzpicture}
\hspace{10mm}
\begin{tikzpicture}[scale=0.8]
\draw (0,0) rectangle (4,3);
\draw[dotted] (0,2)--(4,2); \draw[dotted] (0,1)--(4,1);
\draw[dotted] (2,0)--(2,3); \draw[thick] (0,2)--(4,0);
\draw[thick] (2,2)--(4,3); \draw[thick] (0,0)--(2,1);
\def\xa{0.25}
\def\xb{0.75}
\def\xc{1.25}
\def\xd{1.75}
\def\xe{2.25}
\def\xf{2.75}
\def\xg{3.25}
\def\xh{3.75}
\def\ya{2-\xa/2}
\def\yd{2-\xd/2}
\def\yg{1+\xg/2}
\def\yb{\xb/2}
\def\yc{\xc/2}
\def\ye{2-\xe/2}
\def\yf{1+\xf/2}
\def\yh{2-\xh/2}
\draw[fill,blue] (\xa,\ya) circle (0.1); \draw[fill,blue]
(\xb,\yb) circle (0.1); \draw[fill,blue] (\xc,\yc) circle (0.1);
\draw[fill,blue] (\xd,\yd) circle (0.1); \draw[fill,blue]
(\xe,\ye) circle (0.1); \draw[fill,blue] (\xf,\yf) circle (0.1);
\draw[fill,blue] (\xg,\yg) circle (0.1);
\draw[fill,blue] (\xh,\yh) circle (0.1);
\draw[blue] (\xa,\ya) node[above] {6}; \draw[ blue]
(\xb,\yb) node[above] {2}; \draw[ blue] (\xc,\yc) node[above] {3};
\draw[ blue] (\xd,\yd) node[above] {5}; \draw[ blue]
(\xe,\ye) node[above] {4}; \draw[ blue] (\xf,\yf) node[above] {7};
\draw[ blue] (\xg,\yg) node[above] {8};
\draw[ blue] (\xh,\yh) node[above] {1};
\end{tikzpicture}
\caption{A matrix $M$, its corresponding grid of segments, and a drawing of the permutation $62354781\in\G_8(M)$.}
\label{fig:grid}
\end{figure}
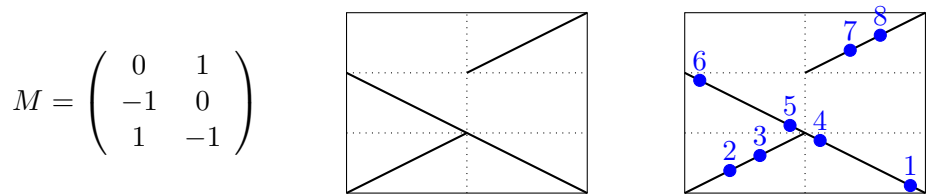

\subsection{Summary of main results}

In general, the product of Schur-positive subsets of $\S_n$ does
not give a Schur-positive multiset or set. We are interested in
finding families of subsets whose product
is Schur-positive, either as a multiset or as a set.

After introducing some definitions in Section~\ref{sec:prelim}, a little background on fine sets in Section~\ref{sec:background},
and some simple examples of Schur-positive grid classes in Section~\ref{sec:elementary},
our first main result about products of Schur-positive sets appears in Section~\ref{sec:products} as Theorem~\ref{main}. For each $J\subseteq \{1,\dots,n-1\}$, define the {\em descent
class} $D_{n,J}:=\{\pi\in \S_n:\ \Des(\pi)=J\}$, and its inverse $D_{n,J}^{-1}:=\{\pi^{-1}:\ \pi\in D_{n,J}\}$.
It was shown by Gessel~\cite{Gessel} that $D_{n,J}^{-1}$ is Schur-positive.
Theorem~\ref{main} includes the following statement, which appears also as Theorem~\ref{main_FD}.

\begin{theorem}\label{main_FD_1}
For every Schur-positive set $\BBB\subseteq \S_n$ and every
$J\subseteq \{1,\dots,n-1\}$, the multiset product $\BBB
D_{n,J}^{-1}$ is Schur-positive.
\end{theorem}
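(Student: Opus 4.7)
The plan has two parts: a structural decomposition of $D_{n,J}^{-1}$, and a Kronecker-product identity for multiplying a fine multiset by a union of Knuth classes.

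For the decomposition, the classical RSK identity $\Des(\pi^{-1})=\Des(P(\pi))$ yields
\[
D_{n,J}^{-1} \;=\; \bigsqcup_{\substack{T\in\SYT(n) \\ \Des(T)=J}} K_T, \qquad K_T:=\{\pi\in\S_n : P(\pi)=T\},
\]
a disjoint union of Knuth classes. Each $K_T$ is Schur-positive with $\Q(K_T)=s_{\shape(T)}$, and summing recovers the formula $\Q(D_{n,J}^{-1})=r_\alpha$, the ribbon Schur function attached to the composition $\alpha$ of $n$ determined by $J$.

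The main technical input, which I would import from Section~\ref{sec:background}, is the Kronecker-product identity: for any fine multiset $\BBB\subseteq\S_n$ (one with $\Q(\BBB)$ symmetric) and any multiset $\mathcal{C}\subseteq\S_n$ that is a union, with multiplicities, of Knuth classes,
\[
\Q(\BBB\cdot \mathcal{C}) \;=\; \Q(\BBB) * \Q(\mathcal{C}),
\]
where $*$ denotes the internal (Kronecker) product of symmetric functions. The underlying mechanism is that $\Q(\pi\cdot K_T)$ depends on $\pi$ only through $\Des(\pi)$; once $\BBB$ is replaced by a disjoint union of Knuth classes with the same descent distribution (possible because $\Q(\BBB)$ is symmetric), the resulting transfer matrix between descent sets and symmetric functions is identified with the Kronecker pairing. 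Note that the analogous identity fails if $\mathcal{C}$ is merely fine but not a union of Knuth classes, so the Knuth-class structure of $D_{n,J}^{-1}$ is essential. Establishing this identity is the main obstacle; it is exactly what the fine-set framework of Section~\ref{sec:background} is designed to provide.

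Combining these ingredients with bilinearity of $*$ gives
\[
\Q(\BBB\cdot D_{n,J}^{-1}) \;=\; \Q(\BBB) * r_\alpha.
\]
Write $\Q(\BBB)=\sum_\lambda a_\lambda s_\lambda$ and $r_\alpha=\sum_\mu b_\mu s_\mu$ with $a_\lambda,b_\mu\in\bbn$; then the right-hand side equals $\sum_{\lambda,\mu}a_\lambda b_\mu\,(s_\lambda*s_\mu)$. Each $s_\lambda*s_\mu$ is the Frobenius characteristic of the tensor product $V^\lambda\otimes V^\mu$ of two irreducible $\S_n$-representations, and therefore expands in the Schur basis with nonnegative integer coefficients. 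Summing against nonnegative integers preserves Schur-positivity, so $\Q(\BBB\cdot D_{n,J}^{-1})$ is Schur-positive, completing the proof.
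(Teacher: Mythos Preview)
Your proof has a genuine gap: the Kronecker-product identity you claim to import from Section~\ref{sec:background} is not proved there, and in the generality you state it --- for $\mathcal{C}$ an arbitrary union of Knuth classes --- it is \emph{false}. A single Knuth class $K_T$ is such a union, and any Knuth class $K_S$ is a fine set; your identity would then force $\Q(K_S K_T)=s_{\shape(S)}*s_{\shape(T)}$. But as the paper itself observes in Section~\ref{sec:open}, for $n\ge 5$ there exist Knuth classes $A,B\subset\S_n$ for which $\Q(AB)$ is not even symmetric. So the ``underlying mechanism'' you sketch --- that $\Q(\pi\cdot K_T)$ depends only on $\Des(\pi)$ --- cannot hold as stated, and nothing in Section~\ref{sec:background} supplies it.

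The identity $\Q(\BBB D_{n,J}^{-1})=\Q(\BBB)*\Q(D_{n,J}^{-1})$ that you ultimately want \emph{is} correct, but establishing it is precisely the content of Theorem~\ref{main}, not a consequence of the framework in Section~\ref{sec:background}. The paper's proof does not pass through the Knuth-class decomposition of $D_{n,J}^{-1}$. Instead it first treats the weak inverse descent class $R_{n,J}^{-1}=\bigsqcup_{I\subseteq J}D_{n,I}^{-1}$, exploiting the fact that right multiplication by $R_{n,J}^{-1}$ has a concrete shuffle interpretation: $\pi R_{n,\{k\}}^{-1}$ is the set of all shuffles of $(\pi(1),\dots,\pi(k))$ with $(\pi(k+1),\dots,\pi(n))$. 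Stanley's shuffling theorem (Proposition~\ref{Stanley_lemma}) then guarantees that the descent distribution over such shuffles depends only on the descent sets of the blocks; combined with Lemma~\ref{lemma_restriction} and Lemma~\ref{lemma_shuffles}, this shows $\BBB R_{n,J}^{-1}$ is fine for $(\rho\downarrow_{\S_{\bar J}})\uparrow^{\S_n}$. Inclusion--exclusion over $I\subseteq J$, together with the reciprocity $(\phi\otimes\psi\downarrow_H)\uparrow^G\cong\phi\uparrow^G\otimes\psi$, then recovers the statement for $D_{n,J}^{-1}$. The shuffle structure of $R_{n,J}^{-1}$ is what drives the argument; an individual Knuth class has no analogous structure, which is why your decomposition, though natural, does not lead to a proof.
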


We provide a representation-theoretic proof that involves
Solomon's descent representations
and Stanley's shuffling theorem.

If instead of considering multiset products we are interested in the underlying sets being Schur-positive, we have a more restricted theorem.
Let $c$ be the $n$-cycle $(1,2,\dots,n)$, and let $C_n=\langle
c\rangle=\{c^k:\ 0\le k< n\}$, the cyclic subgroup generated by $c$, which is shown to be Schur-positive in Corollary~\ref{cor:LLLfine}. We can interpret
$D_{n-1,J}^{-1}$ as a subset of $\S_n$ by identifying $\S_{n-1}$ as the set of the permutations in $\S_n$ that fix $n$. The following theorem about multiset products appears as Theorem~\ref{thm:horizontal1}.

\begin{theorem}\label{main_FD_2}
For every $J\subseteq \{1,\dots,n-2\}$, the set product
$D_{n-1,J}^{-1}C_n$ is Schur-positive.
\end{theorem}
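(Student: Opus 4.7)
The plan is to reduce the theorem to the Pieri-type identity
\[
\Q(D_{n-1,J}^{-1}\, C_n) \;=\; \Q(D_{n-1,J}^{-1}) \cdot s_{(1)}.
\]
Once this is established, Schur-positivity is immediate: Gessel proved $\Q(D_{n-1,J}^{-1}) = \sum_{\lambda\vdash n-1} d^\lambda_J\, s_\lambda$, where $d^\lambda_J := |\{T\in\SYT(\lambda):\Des(T)=J\}|$, so multiplying by $s_{(1)}$ is Pieri's rule, giving
\[
\Q(D_{n-1,J}^{-1}\, C_n) \;=\; \sum_\lambda d^\lambda_J \sum_{\substack{\mu\supset\lambda\\|\mu|=n}} s_\mu,
\]
which is manifestly Schur-positive.

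I would first verify that the product is a set: if $\sigma_1 c^{k_1} = \sigma_2 c^{k_2}$ with $\sigma_1,\sigma_2$ fixing $n$, then $\sigma_2^{-1}\sigma_1 = c^{k_2-k_1}$ fixes $n$, forcing $k_1\equiv k_2 \pmod{n}$ and hence $\sigma_1=\sigma_2$. Equivalently, $C_n$ is a complete system of right coset representatives for $\S_{n-1}$ in $\S_n$, so the product decomposes as the disjoint union $\bigsqcup_{\sigma\in D_{n-1,J}^{-1}}\sigma C_n$. Next, I would refine this using Knuth classes, writing $D_{n-1,J}^{-1} = \bigsqcup_{P:\Des(P)=J} K_P$, where $K_P$ consists of the permutations in $\S_{n-1}$ with RSK insertion tableau $P\in\SYT(\lambda)$ for some $\lambda\vdash n-1$. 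By Gessel-Reutenauer, $\Q(K_P)=s_\lambda$, and summing a per-Knuth-class identity $\Q(K_P\cdot C_n) = s_{\shape(P)}\cdot s_{(1)}$ over all $P$ with $\Des(P)=J$ yields the overall Pieri identity.

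The main obstacle is proving this per-Knuth-class identity. A representation-theoretic approach would identify right multiplication by the coset system $C_n$ with induction from $\S_{n-1}$ to $\S_n$ at the level of the Gessel characteristic; concretely, one would show $\Q(K_P\cdot C_n) = \ch(\chi^\lambda\uparrow_{\S_{n-1}}^{\S_n})$, where $\chi^\lambda := \ch^{-1}(s_\lambda)$ is the irreducible character associated with $K_P$. Once this identification is made, the branching rule $\chi^\lambda\uparrow_{\S_{n-1}}^{\S_n} = \sum_{\mu\supset\lambda}\chi^\mu$ yields $\Q(K_P\cdot C_n) = \sum_{\mu\supset\lambda}s_\mu = s_\lambda\cdot s_{(1)}$ immediately, via Frobenius reciprocity. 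Alternatively, one can seek a descent-preserving bijection
\[
K_P\cdot C_n \;\longleftrightarrow\; \bigsqcup_{\mu\supset\shape(P),\,|\mu|=n}\SYT(\mu),
\]
whose existence is consistent with the branching-rule cardinality identity $n\cdot f^{\shape(P)} = \sum_{\mu\supset\shape(P)} f^\mu$. The delicate point is that this bijection cannot be the naive map $\pi\mapsto Q(\pi)$: direct computation in small cases shows that the $Q$-tableaux of elements of $K_P\cdot C_n$ distribute across shapes differently from the target counts $f^\mu$, with the discrepancies canceling only once fundamental quasisymmetric functions are grouped into Schur functions. Any combinatorial proof will therefore require a more intricate procedure, likely involving jeu-de-taquin promotion on the cyclic orbits to extract the correct extended tableau from the pair $(\sigma,k)$.
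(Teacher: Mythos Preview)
Your target identity $\Q(D_{n-1,J}^{-1}C_n)=\Q(D_{n-1,J}^{-1})\cdot s_{(1)}$ is correct and is exactly equivalent to what the paper establishes (namely that $D_{n-1,J}^{-1}C_n$ is fine for $S^{Z_{n-1,J}}\uparrow^{\S_n}$). The verification that the product is a set is also fine. But the proposal stops short of a proof: you explicitly flag the per-Knuth-class identity $\Q(K_P\cdot C_n)=s_{\shape(P)}\cdot s_{(1)}$ as ``the main obstacle'' and then sketch two possible attacks without carrying either one out. Saying that the representation-theoretic approach ``would identify right multiplication by $C_n$ with induction'' is a restatement of what has to be proved, not a proof; and for the bijective route you yourself note that the obvious map fails and something ``more intricate'' is needed. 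As it stands, the crucial step is missing.

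The paper avoids the Knuth-class decomposition entirely. Instead it works with the coarser sets $R_{n-1,J}^{-1}=\bigsqcup_{I\subseteq J}D_{n-1,I}^{-1}$ and writes down an explicit $\Des$-preserving bijection
\[
R_{n-1,J}^{-1}C_n \;\longrightarrow\; \SYT(L_{n,J}),
\]
where $L_{n,J}$ is the disconnected skew shape consisting of horizontal strips of sizes $j_1,\,j_2-j_1,\,\dots,\,n-1-j_t,\,1$. The bijection sends $\sigma c^{-k}$ to the tableau obtained from the row-reading tableau of $\sigma^{-1}$ by adding $k$ to every entry (mod $n$) and then sorting each row. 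This gives $\Q(R_{n-1,J}^{-1}C_n)=s_{L_{n,J}}=\ch(1_{\S_{\bar J}}\uparrow^{\S_n})$ directly, and a single inclusion--exclusion over $I\subseteq J$, together with the standard identity $\sum_{I\subseteq J}(-1)^{|J\setminus I|}1_{\S_{\bar I}}\uparrow^{\S_{n-1}}=S^{Z_{n-1,J}}$, finishes the argument. No Knuth classes, no promotion.

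It is worth noting that the per-Knuth-class statement you are aiming for is strictly stronger than the theorem: it is the special case $\BBB=K_P$ of the assertion that $\BBB C_n$ is fine for $\rho\uparrow^{\S_n}$ whenever $\BBB\subseteq\S_{n-1}$ is fine for $\rho$. The paper records this general statement separately (Theorem~\ref{thm:horizontal_induction}) and defers its proof to a sequel. So your route, if completed, would prove more---but the paper's bijection gives a self-contained argument for the theorem at hand without needing that extra strength.
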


The proof of this theorem combines a descent-set-preserving bijection together with sieve methods and
representation-theoretic arguments.

Schur-positivity of various set and multiset products of grid classes follows from the above two theorems.
In Section~\ref{sec:vertical} we discuss applications of Theorem~\ref{main_FD_1} to vertical rotations of grids, of which arc permutations are a special case, and thus we obtain a short proof of their Schur-positivity. Applications of Theorem~\ref{main_FD_2} to horizontal rotations are discussed in Section~\ref{sec:horizontal}, together with
equidistribution phenomena.

Theorems~\ref{main_FD_1} and~\ref{main_FD_2}  imply
Schur-positivity of certain  grid classes. One such class is $\G(M_k)$, the grid obtained by vertical rotation of the
grid whose matrix is a $k\times 1$ matrix of ones (see the drawing of the grid $\G(M_3)$ in Figure~\ref{fig:GM3}).
The following consequence is stated in Corollary~\ref{cor:rotated_shuffles2}.

\begin{corollary}
For every positive integers $k$ and $n$, $\Q(\G_n(M_k))$ is
Schur-positive.
\end{corollary}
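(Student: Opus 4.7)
The plan is to exhibit $\G_n(M_k)$ as a disjoint union of set products $D_{n-1,J}^{-1}\cdot C_n$ and invoke Theorem~\ref{main_FD_2} on each piece.

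I would first establish the combinatorial description $\G_n(M_k)=\{\pi\in \S_n:\ |\cDes(\pi^{-1})|\le k\}$, where $\cDes$ denotes the cyclic descent set. The non-rotated grid class of the $k\times 1$ matrix of ones is $\G_n(\widetilde M_k)=\bigsqcup_{|J|\le k-1}D_{n,J}^{-1}$, namely the permutations with $|\Des(\pi^{-1})|\le k-1$, and vertical rotation cyclically shifts the value intervals attached to the $k$ rows. A direct case analysis shows that the unit value shift $\tau\mapsto c\tau$ with $c=(1,2,\dots,n)$ merely moves one cyclic descent from position $\tau^{-1}(n)$ to position $\tau^{-1}(n)-1\pmod n$, so $|\cDes|$ is invariant under cyclic value shifts; the characterization above follows.

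Given $\pi\in\G_n(M_k)$, set $p=\pi^{-1}(n)$, $j=n-p\bmod n$, and $\sigma=\pi c^{-j}$. Then $\sigma(n)=n$, so $n\in\cDes(\sigma^{-1})$, and the invariance above gives
\[
|\Des(\sigma^{-1})|=|\cDes(\sigma^{-1})|-1=|\cDes(\pi^{-1})|-1\le k-1.
\]
Identifying $D_{n-1,J}^{-1}$ with the set of $\sigma\in\S_n$ satisfying $\sigma(n)=n$ and $\Des(\sigma^{-1})=J\subseteq\{1,\dots,n-2\}$, the map $\pi\mapsto(\sigma,j)$ gives a bijection between $\G_n(M_k)$ and $\bigsqcup_{|J|\le k-1}D_{n-1,J}^{-1}\times C_n$, with inverse $(\sigma,j)\mapsto\sigma c^j$. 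Hence
\[
\G_n(M_k)\ =\ \bigsqcup_{\substack{J\subseteq\{1,\dots,n-2\}\\ |J|\le k-1}}D_{n-1,J}^{-1}\cdot C_n,
\]
and applying Theorem~\ref{main_FD_2} to each summand together with additivity of $\Q$ over disjoint unions yields $\Q(\G_n(M_k))=\sum_{J}\Q(D_{n-1,J}^{-1}\cdot C_n)$, a sum of Schur-positive symmetric functions.

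The main obstacle is the first step: carefully matching the geometric definition of vertical rotation with the cyclic-descent description, together with the invariance $|\cDes(c\tau)|=|\cDes(\tau)|$, which also underpins disjointness of the decomposition. Once this is in place, the bijection above and the appeal to Theorem~\ref{main_FD_2} are routine.
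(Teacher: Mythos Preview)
Your proposal is correct and follows essentially the same route as the paper: you establish the cyclic-descent characterization $\G_n(M_k)=\{\pi:\cdes(\pi^{-1})\le k\}$ (the paper's Lemma~\ref{lem:GMkcdes}), use invariance of $\cdes$ under rotations (Lemma~\ref{lem:cdes_rotations}) to rewrite this as the disjoint union $\bigsqcup_{|J|\le k-1} D_{n-1,J}^{-1}C_n$ (Lemma~\ref{rotated_shuffles2_lemma2}), and then apply Theorem~\ref{thm:horizontal1} to each piece. The only difference is packaging: the paper separates these steps into named lemmas, while you sketch them inline.
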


A closely related application involves {\em cyclic descents},
which were introduced by Cellini~\cite{Cellini} and studied by
Petersen, Dilks and Stembridge~\cite{Petersen, Dilks} (see
Definition~\ref{def_cyc_des}).  Even though inverse cyclic descent
classes are not necessarily Schur-positive,
it follows from Theorem~\ref{main_FD_2} that subsets of
permutations with fixed inverse cyclic descent number are
Schur-positive (Corollary~\ref{cor:cyc_fine}).

Other geometric operations on grid classes, such as reflections
and stacking, are applied in Section~\ref{sec:other} to get
more examples of  Schur-positive grid classes.
Section~\ref{prod_one_col} presents an explicit compact description of the set product of a one-column
grid class with an arbitrary grid class. This result generalizes~\cite[Theorem 6]{AAA}.
The paper concludes with a list of some open problems, questions, conjectures, and ideas for further work in Section~\ref{sec:open}.

Table~\ref{tab:summary} summarizes our main results and conjectures about Schur-positive sets and multisets, together with their location in the paper.

\begin{table}[h]
\centering
\begin{tabular}{|l|l|}
\hline
\multicolumn{2}{|c|}{\bf Known Schur-positive sets} \\
\hline
Inverse descent classes $D_{n,J}^{-1}$ & Lemma~\ref{lem:DJ} \cite[Thm. 7]{Gessel}\\
\hline
Subsets of fixed inversion number & \multirow{3}{*}{Proposition \ref{fine_list} \cite[Prop. 9.5]{Adin-R}}\\
\cline{1-1}
Subsets closed under conjugation & \\
\cline{1-1}
Subsets closed under Knuth relations & \\
\hline
Arc permutations $\A_n$ & Proposition~\ref{prop:rotated_leftunimodal} \cite[Thm. 5]{ER1} \\
\hline\hline
\multicolumn{2}{|c|}{\bf New Schur-positive sets} \\
\hline
One-column grid classes $\G_n^{\vv}$ (includes left-unimodal  & \multirow{2}{*}{Corollary \ref{one-column_zigzags}}\\
permutations $\LL_n$ and shuffles of increasing sequences $\G_n^{+^k}$) & \\
\hline
Colayered permutations $\LLL_n^k$ & Corollary \ref{cor:LLLfine}\\
\hline
Shuffles of Schur-positive sets on disjoint symmetric groups & Lemma \ref{lemma_shuffles}\\
\hline
Vertical rotations of grids of the form $\{C_n \G_n^{+^k}\}=\G_n(M_k)$ & Proposition~\ref{R2}, Corollary~\ref{cor:rotated_shuffles2}\\
\hline
Horizontal rotations $D_{n-1,J}^{-1}C_n$ of inverse descent classes in $\S_{n-1}$& Theorem~\ref{thm:horizontal1}\\
\hline
Sets with a fixed number of inverse cyclic descents & Corollary~\ref{cor:cyc_fine}\\
\hline
Certain stacked grids $\mathcal{J}_n$ and $\mathcal{K}_n$ & Propositions~\ref{J:prop} and~\ref{K:prop}\\
\hline
* Vertical rotations of one-column grids  $\{C_n \G_n^{\vv}\}$ & Conjecture~\ref{conj:rotations-onecolumn}\\
\hline
* Vertical rotations $C_nD_{n-1,J}^{-1}$ of inverse descent classes in $\S_{n-1}$ & Conjecture~\ref{Conj_CD_n-1}\\
\hline
Horizontal rotations $\BBB C_n$ of Schur-positive sets $\BBB\subseteq\S_{n-1}$& Theorem~\ref{thm:horizontal_induction}\\
\hline\hline
\multicolumn{2}{|c|}{\bf New Schur-positive multisets} \\
\hline
Multiset products $\BBB D_{n,J}^{-1}$ for every Schur-positive set $\BBB\subseteq\S_n$ & Theorem~\ref{main}\\
\hline
Vertical rotations  $C_n D_{n,J}^{-1}$ of inverse decent classes & Corollary~\ref{cor:vertical}\\
\hline
Multiset products  $\LLL^k_n \G_n^\vv$ of colayered perm. with one-column grids & Corollary~\ref{cor:LLLGv}\\
\hline
* Multiset products $D_{n,J}^{-1}\BBB$ for every Schur-positive set $\BBB\subseteq\S_n$ & Conjecture~\ref{conj:desfine}\\
\hline
\end{tabular}
\caption{Summary of results about Schur-positive sets and multisets. Conjectured statements are marked with a *.}
\label{tab:summary}
\end{table}

\section{Preliminaries}\label{sec:prelim}

\subsection{Descent classes and ribbons}\label{sec:zigzag}

Let $[n]:=\{1,2,\dots,n\}$, and let $\S_n$ denote the symmetric group on $[n]$.
The {\em descent set} of a permutation $\pi\in \S_n$ is defined by
\[
\Des(\pi):=\{i:\ \pi(i)>\pi(i+1)\}.
\]
For $J\subseteq [n-1]$, define the {\em descent class}
\[
D_{n,J}:=\{\pi\in \S_n:\ \Des(\pi)=J\}
\]
and the corresponding {\em inverse descent class} $D_{n,J}^{-1}:=\{\pi^{-1}:\pi\in D_{n,J}\}$.

For a skew shape $\lambda/\mu$, let $\SYT(\lambda/\mu)$ be the set
of standard Young tableaux of shape $\lambda/\mu$. We use the
English notation in which row indices increase from top to bottom.

For $T\in\SYT(\lambda/\mu)$, define its descent set by
\[
\Des(T):= \{ i:\ i+1 \textrm{ lies southwest of } i \textrm { in $T$}\}.
\]

A {\em ribbon} or {\em zigzag shape} is a connected skew shape that does not
contain a $2\times 2$ square.
 For example, every hook is a ribbon.
There is a natural bijection between the set of all subsets of
$[n-1]$ and the set of all ribbons of size $n$, where the size is defined to be the number of cells.

\begin{defn}\label{zigzag_subset}
Given a subset $J \subseteq [n-1]$, let $Z_{n,J}$ be the ribbon with $n$ cells labeled $1,\dots,n$ increasing in the northeast direction, where cell $i+1$ is immediately above cell $i$ if $i \in J$, and immediately to the right of cell $i$ otherwise.
\end{defn}

For example, if $n = 9$ and $J = \{1,3,5,6\}$, then $$Z_{n,J} =
\young(::\hfil\hfil\hfil,::\hfil,:\hfil\hfil,\hfil\hfil,\hfil).$$

Consider the map from the set of standard Young tableaux (SYT for short) of all ribbons of size $n$ to
permutations in $\S_n$ defined by taking the reading word of the SYT, that is, listing its entries starting
from the southwest corner and moving along the shape. The restriction of
this map to the set $\SYT(Z_{n,J})$ of tableaux of a fixed ribbon shape $Z_{n,J}$ is a bijection
to permutations in $\S_n$ with descent set $J$.
This bijection has the property that the descent set of the SYT becomes the descent set of the inverse of the associated permutation.
To see this, suppose that the entries of a SYT $T$ starting from the SW corner are $\pi=\pi(1)\pi(2)\dots\pi(n)$. Then, $i$ is a descent of $\pi^{-1}$ if $i+1$ appears to the left of $i$ in this sequence, which is equivalent to saying that $i+1$ appears in a box lower than $i$ in $T$, that is, $i$ is a descent of $T$.
We conclude that for every $J\subseteq [n-1]$, the distribution of the statistic $\Des$ is the same over $D_{n,J}^{-1}$ and over $\SYT(Z_{n,J})$.

\subsection{Knuth classes and shuffles}\label{prel_sec:shuffles}

The well-known Robinson--Schensted correspondence maps each
permutation $\pi \in \S_n$ to a pair $(P_\pi, Q_\pi)$ of standard
Young tableaux of the same shape $\lambda \vdash n$.
Recall that this correspondence is a $\Des$-preserving bijection in the following sense.

\begin{lemma}[{\cite[Lemma 7.23.1]{Stanley_ECII}}]\label{Knuth1}
For every permutation $\pi\in \S_n$,
\[
\Des(P_\pi)=\Des(\pi^{-1}) \quad \text{ and } \quad
\Des(Q_\pi)=\Des(\pi).
\]
\end{lemma}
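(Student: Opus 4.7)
My plan is to prove $\Des(Q_\pi)=\Des(\pi)$ first by tracking the positions of new boxes during the Robinson--Schensted insertion process, and then to deduce $\Des(P_\pi)=\Des(\pi^{-1})$ from the standard symmetry of RSK.

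Write $(P_i,Q_i)$ for the pair of standard Young tableaux produced after inserting the prefix $\pi(1),\dots,\pi(i)$. At step $i+1$, inserting $\pi(i+1)$ into $P_i$ enlarges the shape by a single box, and $Q_{i+1}$ is obtained from $Q_i$ by labelling that new box with $i+1$. Consequently, $i\in\Des(Q_\pi)$ iff the new box created at step $i+1$ lies in a row strictly below the new box created at step $i$. The heart of the argument is therefore the following row-insertion comparison lemma: if one successively inserts $x$ and then $y$ into a tableau $T$, the positions $B_x,B_y$ of the two new boxes satisfy
\[
x<y\ \Longrightarrow\ B_y\text{ lies strictly to the right of, and weakly above, }B_x,
\]
\[
x>y\ \Longrightarrow\ B_y\text{ lies strictly below, and weakly to the left of, }B_x.
\]
Specialized to $x=\pi(i),\ y=\pi(i+1)$, this yields $i\in\Des(Q_\pi)$ iff $\pi(i)>\pi(i+1)$ iff $i\in\Des(\pi)$.

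To prove the comparison lemma I would proceed by induction on the rows of the tableau, simultaneously following the two bumping paths. Writing $x_r,y_r$ for the elements arriving at row $r$ in the two successive insertions (so $x_1=x$ and $y_1=y$), the inductive invariant is: in the $x<y$ case, $y_r$ is placed strictly to the right of the position where $x_r$ was placed in the row, which either terminates the $y$-path at a column strictly larger than that of $B_x$ or else produces a bumped element $y_{r+1}$ satisfying $x_{r+1}<y_{r+1}$; in the $x>y$ case, $y_r$ bumps into a column weakly to the left of the one used by $x_r$, forcing the $y$-path to continue at least one row below wherever $x$'s terminated. Each inductive step reduces to the row-strict/column-strict property of the tableau in a single row, after distinguishing the three scenarios in which both paths are still active, only the $x$-path has terminated, or both terminate in the same row. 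Checking that the strict and weak inequalities propagate correctly through all these cases is the principal technical obstacle.

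Finally, for the second equality I would invoke the well-known symmetry theorem of RSK: if $\pi\leftrightarrow(P_\pi,Q_\pi)$, then $\pi^{-1}\leftrightarrow(Q_\pi,P_\pi)$. In particular $Q_{\pi^{-1}}=P_\pi$, so applying the identity $\Des(Q_\sigma)=\Des(\sigma)$ just established to $\sigma=\pi^{-1}$ gives $\Des(P_\pi)=\Des(Q_{\pi^{-1}})=\Des(\pi^{-1})$, completing the plan.
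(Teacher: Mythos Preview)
Your argument is correct and is exactly the standard textbook proof: establish $\Des(Q_\pi)=\Des(\pi)$ via the row-bumping comparison lemma, then pull the other identity across using Sch\"utzenberger's symmetry $\pi^{-1}\leftrightarrow(Q_\pi,P_\pi)$. The inductive invariant you set up for the two bumping paths is the right one, and the case analysis you describe does go through without surprises.

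There is nothing to compare against, however: the paper does not supply its own proof of this lemma. It is quoted verbatim from Stanley's \emph{Enumerative Combinatorics II}, Lemma~7.23.1, and used as a black box. What you have written is essentially a sketch of Stanley's proof (or equivalently the treatment in Fulton's \emph{Young Tableaux}), so you have reproduced the cited argument rather than offered an alternative to anything in the paper.
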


The {\em Knuth class} corresponding to a standard Young tableau
$T$ of size $n$ is the set of permutations
\[
{\mathcal C}_T=\{ \pi \in \S_n :\ P_\pi = T \}.
\]
If $T$ has shape $\lambda$ then we say that ${\mathcal C}_T$ is a {\em Knuth
class of shape $\lambda$}.
By Lemma~\ref{Knuth1}, inverse descent classes are disjoint unions
of Knuth classes.

A {\em Knuth relation} on a permutation $\pi$ is a switch of two adjacent letters $ac$, where
$bac$ or $acb$ are adjacent in $\pi$ and $a<b<c$ or $c<b<a$. Two permutations in $\S_n$ are {\em Knuth-equivalent}
if one can be obtained from the other by applying a sequence of Knuth relations.

\begin{theorem}[{\cite[Theorem 3.4.3]{Sagan_book}}]
The Knuth-equivalence classes resulting from the above Knuth relations on $\S_n$ are precisely the Knuth classes.
\end{theorem}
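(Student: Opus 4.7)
The plan is to establish the two containments separately. One direction shows that any elementary Knuth move leaves the insertion tableau $P_\pi$ unchanged, so each Knuth-equivalence class is contained in a Knuth class $\mathcal{C}_T$. The other direction shows that a single Knuth class $\mathcal{C}_T$ is contained in a single equivalence class, so the two partitions of $\S_n$ coincide.

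For the forward direction, I would fix a partial insertion tableau $P$ (the tableau obtained after RSK-inserting the prefix that precedes the Knuth move) and verify by direct case analysis that RSK-inserting the factor $bac$ (with $a<b<c$) into $P$ yields the same tableau as inserting $bca$, and likewise that inserting $acb$ yields the same result as $cab$; the cases $c<b<a$ are symmetric. The key observation is that when the letters being swapped are separated only by an intermediate value $b$, the bumping paths produced by the two orderings act on the same rows and columns, so the resulting shapes and entries coincide, while only the recording tableau $Q$ changes. Given this local check on a three-letter window, an induction on the number of elementary Knuth moves needed to pass from $\pi$ to $\pi'$, together with the fact that a Knuth move in position $i$ only alters the insertion after inserting the common prefix of length $i-1$, shows that $P_\pi = P_{\pi'}$ whenever $\pi$ and $\pi'$ are Knuth-equivalent.

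For the reverse direction, I would pick a canonical representative for each Knuth class, namely the \emph{column reading word} $w(T)$ obtained by reading the columns of $T$ from bottom to top, left to right; a direct verification gives $P_{w(T)}=T$. It then suffices to show that any $\pi$ with $P_\pi = T$ is Knuth-equivalent to $w(T)$. I would proceed by induction on $n$: locate the entry $n$ in $\pi$ and migrate it to the position it occupies in $w(T)$ using Knuth moves, noting that $n$, being maximal, satisfies the intermediate-value condition whenever it is adjacent to any pair $\{x,y\}$ with some value strictly between them elsewhere. After $n$ is in place, delete it; the residual permutation in $\S_{n-1}$ has insertion tableau equal to $T$ with the cell containing $n$ removed, and the inductive hypothesis finishes the argument.

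The main obstacle lies in the reverse direction: sliding $n$ to its target position is not always immediate, because configurations may arise in which every Knuth move locally available to $n$ moves it in the wrong direction or leaves it stuck. The cleanest resolution is to reinterpret the migration of $n$ as a sequence of jeu de taquin slides on a two-row filling built from $\pi$, and prove the auxiliary lemma that every jeu de taquin slide is realized at the level of the reading word by a sequence of elementary Knuth moves. Once this lemma is in hand, rectifying the reading word of any skew tableau representing $\pi$ down to $T$ automatically produces the required chain of Knuth moves connecting $\pi$ to $w(T)$.
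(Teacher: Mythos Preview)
The paper does not actually prove this theorem; it is quoted verbatim as \cite[Theorem 3.4.3]{Sagan_book} and used as background. So there is no in-paper argument to compare against. That said, your outline is essentially the standard textbook proof (and close to Sagan's), so a few remarks on the sketch itself are still worthwhile.

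Your forward direction is exactly right and is the routine half: a three-letter case check shows that the elementary moves $bac\leftrightarrow bca$ and $acb\leftrightarrow cab$ leave the insertion tableau unchanged, and induction on the length of a chain of moves finishes it.

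For the reverse direction, your first idea---induct on $n$ and migrate the letter $n$ to its target position by Knuth moves---does not work as a standalone argument, and you correctly flag this. Since $n$ is always the largest of any three consecutive letters, it can only participate in a Knuth move as the ``$c$'' in $bac/bca$ or $acb/cab$; in particular it is stuck whenever it sits at the right end of an increasing block or the left end of a decreasing block (patterns $xyn$ with $x<y$ or $nyx$ with $y>x$). Your proposed fix via jeu de taquin is the right one, but the phrase ``a two-row filling built from $\pi$'' is vague and not quite the construction you want. The clean version is: write $\pi$ as the reading word of a skew tableau on a single anti-diagonal (one box per letter), prove the lemma that a single jeu de taquin slide on any skew standard tableau is realized by a sequence of elementary Knuth moves on its reading word, and then rectify to the straight shape. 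Rectification yields $P_\pi$, so the chain of slides gives a chain of Knuth moves from $\pi$ to a reading word of $P_\pi$. Alternatively---and this is closer to Sagan's actual argument---one proves directly that row-inserting a letter $x$ into a tableau $T$ produces a tableau whose row word is Knuth-equivalent to $(\text{row word of }T)\cdot x$; iterating over the letters of $\pi$ gives $\pi\equiv$ row word of $P_\pi$ without any appeal to jeu de taquin.
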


\begin{defn}
Let $U$ and $V$ be disjoint sets of letters and let $\sigma$ and
$\tau$ be two permutations of $U$ and $V$ respectively. A {\em
shuffle of $\sigma$ and $\tau$}, denoted by $\sigma\shuffle \tau$,
is the set of all permutations of $U\sqcup V$ where the
letters of $U$ appear in same order as in $\sigma$ and the
letters of $V$ appear in same order as in $\tau$. For sets
(or multisets) $A$ and $B$ of permutations on disjoint finite sets
of letters, denote by $A\shuffle B$ the (multi)set of all shuffles
of a permutation in $A$ with a permutation in $B$.
\end{defn}

For example,  if
$A$ and $B$ are the multisets $A=\{\{12,12\}\}$ and $B=\{\{3\}\}$,
then $A\shuffle B=\{\{312,312,132,132,123,123\}\}$; if $A$ and $B$
are the sets $A=\{12,21\}$ and $B=\{43\}$, then $A\shuffle
B=\{1243,1423,1432,4123,4132,4312,
2143,2413,2431,4213,4231,4321\}$.
Shuffles will play an important role in Section~\ref{sec:products}.

For partitions $\mu\vdash k$ and $\nu\vdash n-k$, let  $(\mu,\nu)$
be the skew Young diagram obtained by placing Young diagrams of shape $\mu$ and $\nu$
so that the upper-right vertex of the Young diagram of shape $\mu$ coincides with the lower-left vertex of the Young diagram of shape $\nu$.
A {\em Knuth class on the letters $k+1,\dots,n$}
is an equivalence class of the symmetric group on these letters resulting from the Knuth relations
(equivalently, a set obtained from a Knuth class in $\S_{n-k}$ by shifting the letters up by~$k$).
The following result belongs to mathematical folklore. Some generalizations of it can be found in~\cite{BV,R_ind}.

\begin{theorem}\label{BV}
Let $A$ be a Knuth class of shape $\mu$ on the letters $1,\dots,
k$ and $B$ a Knuth class of shape $\nu$ on the letters
$k+1,\dots,n$. The following hold.
\begin{enumerate}
\item $A\shuffle B$ is closed under Knuth relations.
\item The distribution of $\Des$ is the same over $A\shuffle B$ and over $\SYT((\mu,\nu))$.
\end{enumerate}
\end{theorem}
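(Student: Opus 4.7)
The plan is to prove the two parts separately, with (1) by a direct case analysis on Knuth moves and (2) by a quasisymmetric function calculation that uses (1) only implicitly.

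For (1), I would fix $\pi\in A\shuffle B$ and a Knuth move involving three consecutive letters $a,b,c$ with $a<b<c$, say $bac\to bca$ (the case $acb\to cab$ being symmetric). Because every letter of $A$ is at most $k$ and every letter of $B$ is at least $k+1$, there are only three possible configurations: all of $a,b,c$ lie in $A$; all lie in $B$; or $a\in A$ and $c\in B$ (with $b$ in either set). In the first two cases the three letters are consecutive in the corresponding sub-word, so the move restricts to a genuine Knuth move on that sub-word and stays in the Knuth class, while the other sub-word is untouched. In the third case $a$ and $c$ lie in different sub-words, so swapping them preserves both $\sigma$ and $\tau$ as sub-sequences of $\pi$. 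In every case the result is again a shuffle of the same element of $A$ with the same element of $B$.

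For (2), the strategy is to establish the stronger identity
\[
\Q(A\shuffle B)\;=\;\sum_{T\in \SYT((\mu,\nu))}F_{n,\Des(T)},
\]
from which the equidistribution of $\Des$ follows by linear independence of the fundamental quasisymmetric functions $\{F_{n,J}\}_{J\subseteq[n-1]}$. For any Knuth class ${\mathcal C}_T$ of straight shape $\lambda$, the map $\pi\mapsto Q_\pi$ together with Lemma~\ref{Knuth1} gives $\Q({\mathcal C}_T)=\sum_{Q\in\SYT(\lambda)}F_{n,\Des(Q)}=s_\lambda$. Applying this on the respective letter alphabets yields $\Q(A)=s_\mu$ and $\Q(B)=s_\nu$. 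Using the standard product rule for fundamental quasisymmetric functions,
\[
F_{k,\Des(\sigma)}\,F_{n-k,\Des(\tau)}\;=\;\sum_{\pi\in\sigma\shuffle\tau}F_{n,\Des(\pi)},
\]
and summing over $\sigma\in A$ and $\tau\in B$ gives $\Q(A\shuffle B)=\Q(A)\cdot\Q(B)=s_\mu s_\nu$. Since the diagrams of $\mu$ and $\nu$ inside the skew shape $(\mu,\nu)$ share no row or column, we have $s_{(\mu,\nu)}=s_\mu s_\nu$, and on the other hand the expansion of a skew Schur function in the fundamental basis reads $s_{(\mu,\nu)}=\sum_{T\in\SYT((\mu,\nu))}F_{n,\Des(T)}$. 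Matching the two expressions produces the desired identity.

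The main technical point, and the place where care is needed, is the application of the quasisymmetric product formula in our setting: the classical statement is usually phrased for permutations of $[k]$ and $[n-k]$, whereas here $\tau$ is a permutation of the shifted alphabet $\{k+1,\dots,n\}$. This is harmless, since $\Des(\tau)$ depends only on the relative order of the values and the descent set of a shuffle on $[n]$ is computed with the natural order on the disjoint union $\{1,\dots,k\}\sqcup\{k+1,\dots,n\}$; once this is observed the product formula applies verbatim, and the remaining manipulations are standard.
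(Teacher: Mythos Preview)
Your proof of part~(1) is essentially identical to the paper's argument: both perform the same case analysis based on which of the letters $a,b,c$ lie in the $A$-subword versus the $B$-subword.

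For part~(2), your approach is correct but genuinely different from the paper's. The paper constructs an explicit $\Des$-preserving bijection $f\colon A\shuffle B \to \SYT((\mu,\nu))$: given $\pi\in\sigma\shuffle\tau$, one places the recording tableaux $Q_\sigma$ and $Q_\tau$ into the two components of the skew shape $(\mu,\nu)$ and then relabels the entries according to the positions in $\pi$ occupied by the letters of $\sigma$ and $\tau$ respectively. You instead compute $\Q(A\shuffle B)$ algebraically, invoking the shuffle-product rule for fundamental quasisymmetric functions and the identities $s_\mu s_\nu=s_{(\mu,\nu)}$ and $s_{(\mu,\nu)}=\sum_T F_{n,\Des(T)}$. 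Your route is slicker and avoids building a bijection by hand, at the cost of importing the product rule $F_{k,D_1}F_{n-k,D_2}=\sum_\pi F_{n,\Des(\pi)}$ as an external fact; note that within the paper's own logical flow, the analogous identity $\Q(A\shuffle B)=\Q(A)\Q(B)$ is derived later (Lemma~\ref{lemma_shuffles}) as a \emph{consequence} of this very theorem, so your argument requires an independent reference for that product rule to avoid circularity. The paper's bijective proof is more self-contained and yields an explicit correspondence, which can be useful in its own right.
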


\begin{proof}
For the first part, we will prove that for $\pi\in
A\shuffle B$ containing adjacent letters $bac$ with $1\le
a<b<c\le n$, switching $a$ and $c$ gives another permutation $\bar \pi\in
A\shuffle B$. The other cases are identical. First notice that there exist unique $\sigma\in A$
and $\tau\in B$ such that $\pi\in \sigma\shuffle \tau$. If $a<k<c$,
then clearly $\bar\pi\in \sigma\shuffle \tau$. If $c\le k$, then
 $bac$ is a sequence of adjacent letters in $\sigma$, so switching $a$ and $c$ in
 $\sigma$ we get a permutation $\bar\sigma\in A$ such that $\bar\pi\in
\bar\sigma \shuffle \tau$. The case $k<a$ is similar, completing the
proof of part 1.

For the second part, define a map $f:A\shuffle B\longrightarrow
\SYT((\mu,\nu))$ as follows. For a permutation $\pi\in A\shuffle B$,
there exist unique $\sigma\in A$ and $\tau\in B$ such that $\pi\in
\sigma\shuffle \tau$. Let $Q_\sigma$ and $Q_\tau$ be the recording tableaux
that correspond to $\sigma$ and $\tau$ under Robinson--Schensted,
which have shape $\mu$ and $\nu$, respectively.

Let $a_1 < \cdots <a_k$ be the list of positions of the letters of
$\sigma$ in $\pi$, that is, the ordered set $\{\pi^{-1}(i):\ 1\le
i\le k\}$. Similarly, let $b_1 < \cdots <b_{n-k}$ be the list of
positions of the letters of $\tau$ in $\pi$. Place $Q_\sigma$ and
$Q_\tau$ so that the upper-right vertex of $Q_\sigma$ and the lower-left vertex of
$Q_\tau$ coincide, and replace each letter $i$ in the resulting
skew SYT as follows: if $i\le k$ replace $i$ by $a_i$, if $k<i$
replace $i$ by $b_{i-k}$. For example, if $\pi=16783245$ and
$k=3$, we have $\sigma=132$, $\tau=67845$ and
\[
f(\pi)=\young(::234,::78,15,6).
\]
One can verify that $f$ is a $\Des$-preserving bijection.
\end{proof}

The following result, due to Stanley, will be used in Section~\ref{sec:products}. For bijective proofs, see~\cite{Goulden, Stadler}.

\begin{proposition}[{\cite[Ex. 3.161]{ECI}}]\label{Stanley_lemma}
Given two permutations $\sigma$ and $\tau$ of disjoint sets of
integers, the distribution of the descent set over all shuffles of
$\sigma$ and $\tau$ depends only on $\Des(\sigma)$ and $\Des(\tau)$.
\end{proposition}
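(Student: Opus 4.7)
The plan is to deduce the proposition from the product formula for Gessel's fundamental quasisymmetric functions. Given disjoint alphabets $U$ and $V$ with $|U|=m$ and $|V|=k$, and permutations $\sigma$ of $U$ and $\tau$ of $V$, I would first establish the identity
\[
F_{m,\Des(\sigma)} \cdot F_{k,\Des(\tau)} \;=\; \sum_{\pi\in\sigma\shuffle\tau} F_{m+k,\Des(\pi)} \qquad (\ast)
\]
in the ring of quasisymmetric functions. Once $(\ast)$ is in hand the proposition is almost immediate: the left-hand side manifestly depends only on the sets $\Des(\sigma)$ and $\Des(\tau)$, while the fundamental quasisymmetric functions $\{F_{n,J}:J\subseteq[n-1]\}$ form a linearly independent set in $\mathrm{QSym}$, so the right-hand side uniquely recovers the multiset $\{\!\!\{\,\Des(\pi):\pi\in\sigma\shuffle\tau\,\}\!\!\}$. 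This multiset must therefore depend only on $\Des(\sigma)$ and $\Des(\tau)$, which is exactly the claim.

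To prove $(\ast)$, I would use Gessel's theory of $P$-partitions. Encode $\sigma$ and $\tau$ as two disjoint labeled ribbon (zigzag) posets $P_\sigma$ and $P_\tau$; by definition of $F$, the $P$-partition generating functions are $K_{P_\sigma}=F_{m,\Des(\sigma)}$ and $K_{P_\tau}=F_{k,\Des(\tau)}$. The $(P_\sigma\sqcup P_\tau)$-partitions are precisely pairs of a $P_\sigma$-partition and a $P_\tau$-partition, so $K_{P_\sigma\sqcup P_\tau}=F_{m,\Des(\sigma)}F_{k,\Des(\tau)}$. On the other hand, the fundamental theorem of $P$-partitions expresses $K_{P_\sigma\sqcup P_\tau}$ as a sum of $F_{m+k,\Des(\pi)}$ over the linear extensions $\pi$ of $P_\sigma\sqcup P_\tau$, and these linear extensions are exactly the elements of $\sigma\shuffle\tau$. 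Equating the two expressions yields $(\ast)$.

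The main obstacle is really just the setup of $(\ast)$; everything else is a one-line consequence of linear independence in $\mathrm{QSym}$. Alternatively, one could produce a purely combinatorial proof by constructing, for any two pairs $(\sigma,\tau)$ and $(\sigma',\tau')$ with $\Des(\sigma)=\Des(\sigma')$ and $\Des(\tau)=\Des(\tau')$, an explicit descent-preserving bijection $\sigma\shuffle\tau\to\sigma'\shuffle\tau'$. Such bijective arguments (along the lines of the references to Goulden and Stadler cited in the excerpt) are more delicate because swapping two letters of $\sigma$ can alter the descents of shuffles at the boundaries between $U$-letters and $V$-letters; I would expect to handle this by an induction that swaps one pair of adjacent values in $\sigma$ at a time and tracks the resulting local rearrangements in each shuffle. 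The $P$-partition route avoids this subtlety entirely and is the approach I would carry out.
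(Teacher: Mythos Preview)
Your proposal is correct. The paper does not supply its own proof of this proposition: it is simply quoted as an exercise from Stanley's \emph{Enumerative Combinatorics} (Vol.~I), with a pointer to the bijective proofs of Goulden and Stadler. Your $P$-partition argument is precisely the intended solution in that context---the identity $(\ast)$ is the standard description of the product of fundamental quasisymmetric functions via linear extensions of a disjoint union of labeled chains, and the conclusion then follows at once from the linear independence of the $F_{n,J}$. The alternative bijective route you sketch is closer in spirit to the Goulden/Stadler references, but as you note it is more delicate and unnecessary here.
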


\begin{remark}\label{m_shuffles}
Theorem~\ref{BV} and Proposition~\ref{Stanley_lemma} may be
generalized to shuffles of any number of Knuth classes and
permutations, respectively.
\end{remark}

\subsection{Symmetric and quasisymmetric functions}\label{sec:prel_quasi}

Schur functions indexed by partitions of $n$ form a
distinguished basis for $\Lambda^n$, the vector space of homogeneous
symmetric functions of degree $n$;
see, e.g., \cite[Corollary 7.10.6]{Stanley_ECII}.
A symmetric function is {\em Schur-positive} if all the coefficients in its expansion
in the basis of Schur functions are nonnegative.

The irreducible characters of $\S_n$ are also indexed by partitions of $n$.
The {\em Frobenius image} of an $\S_n$-character
$\chi=\sum\limits_{\lambda\vdash n} c_\lambda \chi^\lambda$ is the
symmetric function $f=\sum\limits_{\lambda\vdash n} c_\lambda
s_\lambda$, which we denote by $\ch(\chi)$.

It is clear from the combinatorial definition of Schur functions~\cite[Defintion 7.10.1
and discussion in p. 339]{Stanley_ECII}
that for every pair of partitions $\mu$ and $\nu$ we have
\begin{equation}\label{eq:smunu}
s_{(\mu,\nu)}=s_\mu s_\nu.
\end{equation}
Since the characteristic map is an algebra
isomorphism \cite[Proposition 7.18.2]{Stanley_ECII}, it follows
that if the size of the skew shape $(\mu,\nu)$ is $n$, then
$\chi^{(\mu,\nu)}=(\chi^\mu\otimes \chi^\nu)\uparrow^{\S_n}$. This
identity is naturally generalized to skew shapes with $t$
disconnected components as follows.

\begin{lemma}\label{eq_skew_induced}
Consider a skew shape $\lambda/\mu$ of size $n$, which consists of
disconnected diagrams $\nu^{(1)}\vdash k_1, \dots ,
\nu^{(t)}\vdash k_t$.
Then $s_{\lambda/\mu}= s_{\nu^{(1)}}s_{\nu^{(2)}}\cdots
s_{\nu^{(t)}}$ and
\[
\chi^{\lambda/\mu}= (\chi^{\nu^{(1)}}\otimes
\chi^{\nu^{(2)}}\otimes \cdots \otimes
\chi^{\nu^{(t)}})\uparrow_{\S_{k_1}\times \S_{k_2}\times \cdots
\times \S_{k_t}}^{\S_n}.
\]
\end{lemma}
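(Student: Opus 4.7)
The plan is induction on the number of components $t$, with base case $t=2$ furnished by equation~\eqref{eq:smunu} and the identity $\chi^{(\mu,\nu)}=(\chi^\mu\otimes\chi^\nu)\uparrow^{\S_n}$ noted in the paragraph immediately preceding the lemma. For the inductive step, I would group the first $t-1$ components into a single skew sub-shape disjoint from $\nu^{(t)}$, arrange it so that $\nu^{(t)}$ sits to its upper right as in the definition of $(\mu,\nu)$, and apply the $t=2$ case to obtain
\[
s_{\lambda/\mu} = s_{\nu^{(1)}\sqcup\cdots\sqcup\nu^{(t-1)}}\cdot s_{\nu^{(t)}}.
\]
The inductive hypothesis then splits the first factor, giving the Schur identity.

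For the character identity, I would invoke the fact that the Frobenius characteristic map $\ch$ is an algebra isomorphism from $\bigoplus_{n\ge 0} R(\S_n)$, equipped with the induction product $\chi\cdot\psi=(\chi\otimes\psi)\uparrow_{\S_k\times\S_\ell}^{\S_{k+\ell}}$, to the ring of symmetric functions, and sends skew characters to skew Schur functions. Applying $\ch$ to the right side of the claimed character identity gives
\[
\ch\bigl((\chi^{\nu^{(1)}}\otimes\cdots\otimes\chi^{\nu^{(t)}})\uparrow^{\S_n}\bigr)=\prod_{i=1}^{t}\ch(\chi^{\nu^{(i)}})=\prod_{i=1}^{t}s_{\nu^{(i)}},
\]
which equals $s_{\lambda/\mu}=\ch(\chi^{\lambda/\mu})$ by the Schur identity just proved; injectivity of $\ch$ concludes the argument.

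A cleaner alternative for the Schur part avoids induction: directly from the combinatorial definition, a semistandard tableau of shape $\lambda/\mu$ restricts independently to each connected component, since cells in distinct components share neither a row nor a column and therefore impose no strict or weak inequalities on one another. The weight generating function then factors on the nose as $\prod_i s_{\nu^{(i)}}$. The main point requiring care, either way, is the legitimacy of the intermediate object $s_{\nu^{(1)}\sqcup\cdots\sqcup\nu^{(t-1)}}$: one needs that skew Schur functions depend only on the skew shape up to row and column translation, so a disjoint union of several mutually row-and-column-disjoint skew shapes yields a well-defined symmetric function regardless of their placement. Once that is noted, both the induction and the direct combinatorial argument proceed without further obstacle.
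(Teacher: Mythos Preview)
The paper does not actually prove this lemma; it is stated as a ``natural generalization'' of the $t=2$ case, which the paper justifies by the combinatorial definition of Schur functions (for the identity $s_{(\mu,\nu)}=s_\mu s_\nu$) and the fact that $\ch$ is an algebra isomorphism (for the character identity). Your proposal is correct and supplies precisely the details the paper leaves implicit, via the same two ingredients; your direct combinatorial alternative is in fact the very argument the paper alludes to for the base case, and your caveat that the inductive step needs the two-piece identity with one piece skew (not literally equation~\eqref{eq:smunu} as stated for straight shapes) is well taken and handled by that direct argument.
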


Of special interest is the case of two connected components, 
the first consisting of one box and the second being an arbitrary
partition $\mu\vdash n-1$. Then $\chi^{((1),\mu)}$ is equal to the
induced character $\chi^{\mu}\uparrow^{\S_n}$, which amounts
to the multiplicity-free sum of all
irreducible characters indexed by partitions obtained by adding a box 
to the Young diagram of $\mu$~\cite[Theorem
2.8.3]{Sagan_book}. Thus, for every $\mu\vdash n-1$,
\begin{equation}\label{Pierri_1}
s_{1} s_\mu = \sum\limits_{\lambda \vdash n\atop \lambda/\mu=(1)}
s_\lambda.
\end{equation}

\medskip

{\em Quasisymmetric functions} were introduced by Gessel, see \cite[Section 7.19]{Stanley_ECII} for definitions and background.
For each subset $D \subseteq [n-1]$, define the quasisymmetric function
\[
F_{n,D} := \sum\limits_{i_1\le i_2 \le \ldots \le i_n \atop {i_j <
i_{j+1} \text{ if } j \in D}} x_{i_1} x_{i_2} \cdots x_{i_n}.
\]
The set $\{F_{n,D}:D \subseteq [n-1]\}$ is a basis of the vector space of homogeneous quasisymmetric functions of degree $n$.

Let $\BBB$ be a (multi)set of combinatorial objects, equipped with a
descent map $\Des$ which associates with each
element $b\in \BBB$ a subset $\Des(b) \subseteq [n-1]$. Define the
quasisymmetric function
\begin{equation}\label{eq:QB}
\Q(\BBB) := \sum\limits_{b\in \BBB} m(b,\BBB) F_{n,\Des(b)},
\end{equation}
where $m(b,\BBB)$ is the multiplicity of the element $b$ in
$\BBB$. Note that for two (multi)sets $\BBB$ and $\BBB'$, the equality $\Q(\BBB)=\Q(\BBB')$ is equivalent to the fact that $\Des$ has the same distribution over $\BBB$ and over~$\BBB'$.

\medskip

The following key observation is due to Gessel.

\begin{proposition}[{\cite[Theorem 7.19.7]{Stanley_ECII}}]\label{G1} For every skew shape $\lambda/\mu$,
\[
\Q({\SYT(\lambda/\mu)})=s_{\lambda/\mu}.
\]
\end{proposition}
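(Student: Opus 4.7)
The plan is to prove the identity by expanding $s_{\lambda/\mu}$ via its combinatorial definition as the weight generating function
\[
s_{\lambda/\mu} = \sum_{T \in \operatorname{SSYT}(\lambda/\mu)} x^T
\]
over semistandard Young tableaux of shape $\lambda/\mu$, and then regrouping the right-hand side according to a standardization map $\operatorname{std}\colon \operatorname{SSYT}(\lambda/\mu) \to \SYT(\lambda/\mu)$. Concretely, for an SSYT $T$ with entry multiplicities $m_1, m_2, \ldots$, I define $\operatorname{std}(T)$ by replacing the $1$'s in $T$ with the labels $1, 2, \ldots, m_1$, the $2$'s with $m_1+1, \ldots, m_1 + m_2$, and so on, where within each value I assign the smaller new label to the entry in the lower (i.e., larger-indexed) row, breaking ties within a row from left to right. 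Since equal entries in an SSYT cannot lie in the same column, this prescription is unambiguous, and the resulting labeling satisfies the strict row and column inequalities of an SYT of shape $\lambda/\mu$.

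The key combinatorial step is the following descent lemma. Given $T \in \operatorname{SSYT}(\lambda/\mu)$ with $S = \operatorname{std}(T)$, let $i_j$ denote the entry of $T$ in the cell labeled by $j$ in $S$. Then $i_1 \le i_2 \le \cdots \le i_n$, and moreover $i_j < i_{j+1}$ whenever $j \in \Des(S)$. Weak monotonicity is immediate from the construction, since $\operatorname{std}$ processes cells in nondecreasing order of their $T$-values. For strict inequality at descents, suppose $j \in \Des(S)$, so the cell housing $j+1$ lies in a strictly lower row than the cell housing $j$; if the two $T$-entries were equal, my tie-breaking rule (lower row first) would have forced the smaller label at the lower cell, contradicting the placement of $j+1$ there. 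Conversely, given $S \in \SYT(\lambda/\mu)$ and a sequence $1 \le i_1 \le \cdots \le i_n$ satisfying $i_j < i_{j+1}$ for every $j \in \Des(S)$, placing $i_j$ in the cell labeled $j$ in $S$ recovers a unique SSYT $T$ with $\operatorname{std}(T) = S$: the row-weak inequalities follow from the chain directly, while for two cells in the same column of $\lambda/\mu$ one observes that the range of labels connecting them in $S$ must contain at least one descent (otherwise the rows of successive labels would be nonincreasing, contradicting the column-strictness of $S$), so the corresponding $i$-values are strictly ordered.

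Combining these pieces and partitioning $\operatorname{SSYT}(\lambda/\mu)$ according to the fibers of $\operatorname{std}$ yields
\[
s_{\lambda/\mu} \;=\; \sum_{S \in \SYT(\lambda/\mu)} \;\sum_{\substack{1 \le i_1 \le \cdots \le i_n \\ i_j < i_{j+1} \text{ if } j \in \Des(S)}} x_{i_1} x_{i_2} \cdots x_{i_n} \;=\; \sum_{S \in \SYT(\lambda/\mu)} F_{n,\Des(S)} \;=\; \Q(\SYT(\lambda/\mu)),
\]
where the middle equality is the definition of $F_{n,\Des(S)}$ recalled in Section~\ref{sec:prel_quasi}. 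The main delicate point is the descent lemma, and specifically aligning the standardization convention for equal entries with the paper's English-orientation definition $\Des(T) = \{i : i+1 \text{ lies southwest of } i\}$; once that convention is fixed, the remaining work is a clean bijective re-indexing of the sum and the verification above that standardization and its inverse respect the SSYT axioms.
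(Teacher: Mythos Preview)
The paper does not include its own proof of this proposition; it is stated with a citation to \cite[Theorem 7.19.7]{Stanley_ECII} and used as a black box. Your argument is correct and is precisely the standard proof via standardization of semistandard tableaux that appears in the cited reference: partition $\operatorname{SSYT}(\lambda/\mu)$ into fibers of the standardization map, identify each fiber with the set of weakly increasing sequences that are strict at the descents of the target SYT, and recognize the inner sum as $F_{n,\Des(S)}$. Your tie-breaking rule (lower row first, then left to right within a row) coincides with standardizing along the usual bottom-to-top reading word, and your check that it is compatible with the paper's convention $\Des(T)=\{i:\ i+1 \text{ lies southwest of } i\}$ is accurate. The one place where your write-up is slightly terse is the verification that destandardization followed by standardization returns exactly $S$ (not merely some SYT); the ingredients are present in your descent lemma, but it is worth stating explicitly that a block of equal $i$-values corresponds to a run of consecutive labels with no descents, hence with weakly decreasing row indices, which is exactly the order your tie-breaking reproduces.
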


The next result about inverse descent classes is again due to Gessel. Here we include a proof because the same idea will be used in Section~\ref{sec:one-col}. For a symmetric function $f=\sum_{\lambda\vdash n} c_\lambda
s_\lambda$, let $\langle f, s_\mu\rangle=c_\lambda$.

\begin{lemma}[{\cite[Theorem 7]{Gessel}}]\label{lem:DJ}
For $J\subseteq[n-1]$ and $\lambda\vdash n$,
\[
\Q(D_{n,J}^{-1})=\Q(\SYT(Z_{n,J}))=s_{Z_{n,J}} \] and
\[
\langle  \Q(D_{n,J}^{-1}), s_\lambda\rangle
=|\{P\in\SYT(\lambda):\Des(P)=J\}|.
\]
\end{lemma}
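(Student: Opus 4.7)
The plan is to deduce both identities directly from the tools already set up in the preliminaries, namely the descent-preserving bijection between $\SYT(Z_{n,J})$ and $D_{n,J}^{-1}$ constructed in Section~\ref{sec:zigzag}, Proposition~\ref{G1}, and the RSK correspondence (Lemma~\ref{Knuth1}).

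First, I would establish the chain of equalities $\Q(D_{n,J}^{-1})=\Q(\SYT(Z_{n,J}))=s_{Z_{n,J}}$. Recall from Section~\ref{sec:zigzag} that the reading word map sends $\SYT(Z_{n,J})$ bijectively to $D_{n,J}$, and that the descent set of an SYT $T$ equals the descent set of the inverse of its reading word. Composing with the inversion bijection $\pi\mapsto\pi^{-1}$ thus gives a bijection $\SYT(Z_{n,J})\to D_{n,J}^{-1}$ that preserves descent sets, which immediately yields the first equality from the definition of $\Q$. The second equality is a direct application of Proposition~\ref{G1} to the skew shape $Z_{n,J}$.

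For the coefficient formula, I would expand $\Q(D_{n,J}^{-1})$ using RSK. By Lemma~\ref{Knuth1}, a permutation $\pi$ belongs to $D_{n,J}^{-1}$ if and only if its insertion tableau $P_\pi$ has descent set $J$, and moreover $\Des(\pi)=\Des(Q_\pi)$. Partitioning $\S_n$ by the common shape $\lambda$ of the RSK pair $(P_\pi,Q_\pi)$ yields
\[
\Q(D_{n,J}^{-1})=\sum_{\pi\in D_{n,J}^{-1}}F_{n,\Des(\pi)}
=\sum_{\lambda\vdash n}\,\bigl|\{P\in\SYT(\lambda):\Des(P)=J\}\bigr|\cdot\!\!\sum_{Q\in\SYT(\lambda)}F_{n,\Des(Q)}.
\]
Applying Proposition~\ref{G1} to the straight shape $\lambda$ transforms the inner sum into $s_\lambda$, and then reading off the coefficient of $s_\lambda$ gives the claimed formula.

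There is no substantive obstacle here, as the work has been done by the preliminaries; the only thing to be careful about is consistently tracking which side of the $\pi\leftrightarrow\pi^{-1}$ correspondence carries which descent set, both in the ribbon bijection and in the RSK argument, so that the equality $\Q(D_{n,J}^{-1})=s_{Z_{n,J}}$ and the coefficient identity are obtained from compatible conventions.
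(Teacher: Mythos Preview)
Your proposal is correct and follows essentially the same route as the paper: the first chain of equalities is obtained from the $\Des$-preserving bijection of Section~\ref{sec:zigzag} together with Proposition~\ref{G1}, and the coefficient formula is obtained by applying RSK, using Lemma~\ref{Knuth1} to identify $D_{n,J}^{-1}$ with pairs $(P,Q)$ of common shape with $\Des(P)=J$, and then invoking Proposition~\ref{G1} on the inner sum over $Q$.
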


\begin{proof}
The expression as a skew Schur function follows from the $\Des$-preserving bijection described at the end of Section~\ref{sec:zigzag} 
and Proposition~\ref{G1}, which imply that
$\Q(D_{n,J}^{-1})=\Q(\SYT(Z_{n,J}))=s_{Z_{n,J}}$.

The expansion as a sum of Schur functions can be obtained by noting, using Lemma~\ref{Knuth1},
that the image of $D_{n,J}^{-1}$ by RSK is the set of pairs of
standard Young tableaux $(P,Q)$ with $\shape(P)=\shape(Q)\vdash n$
and $\Des(P)=J$. By Lemma~\ref{Knuth1} and Proposition~\ref{G1},
it follows that
\begin{align*}\Q(D_{n,J}^{-1})=\sum_{\pi\in D_{n,J}^{-1}} F_{n,\Des(\pi)}&= \sum_{\lambda\vdash n} \sum_{P\in\SYT(\lambda) \atop \Des(P)=J} \sum_{Q\in\SYT(\lambda)} F_{n,\Des(Q)}\\
&=\sum_{\lambda\vdash n} |\{P\in\SYT(\lambda):\Des(P)=J\}|\,
s_\lambda.\qedhere\end{align*}
\end{proof}

In particular, inverse descent classes are symmetric and Schur-positive. The following variation of Problem~\ref{prob:symmetric} was
proposed in~\cite{Adin-R}.

\begin{problem}\label{prob:schurpositive}
For which subsets of permutations $A \subseteq \S_n$ is $\Q(A)$
Schur-positive?
\end{problem}

\subsection{Grid classes}\label{sec:grid}

A useful tool in our construction of Schur-positive sets is the
concept of {\em geometric grid classes}, introduced and studied by
Albert et al.~\cite{AABRV}.
To each matrix $M$ with entries in $\{0,1,-1\}$, we associate the grid obtained
by placing line segments of slope $1$ and $-1$ in the locations of the ones and negative ones in $M$, respectively.
See Figure~\ref{fig:grid} for an example.
A geometric grid class consists of those permutations that can be drawn on such a grid.

\begin{defn}
\begin{enumerate}
\item For any matrix $M$ with entries in $\{0,1,-1\}$, let
 $\G_n(M)$ be the set of permutations in
$\S_n$ that can be obtained by placing $n$ dots on the segments of the grid corresponding to $M$ (in
such a way that no two dots have the same $x$- or $y$-coordinate),
labeling the dots with $1,2,\dots,n$ by
increasing $y$-coordinate, and then reading them by increasing
$x$-coordinate.

\item  Let $\G(M)=\bigcup_{n\ge0}\G_n(M)$. We call $\G(M)$ a {\em geometric grid class}, or simply a {\em grid class} for short. All grid classes that appear in this paper are geometric grid classes.
\end{enumerate}
\end{defn}

\begin{ex}
{\em Left-unimodal permutations}, defined as those for which every prefix forms an interval in $\bbz$, are
those in the grid class
$$\G\left(\ba{c} 1\\ -1 \ea\right).$$ A drawing of a permutation
on this grid is shown in Figure~\ref{fig:grid_staircase}. Denote by $\LL$ the set of left-unimodal permutations.
\end{ex}

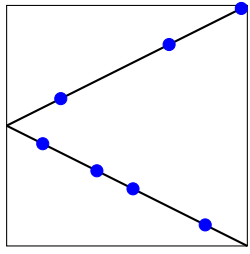
\begin{figure}[htb]
\centering
\begin{tikzpicture}[scale=0.8]
\draw (0,0) rectangle (4,4); \draw[thick] (0,2)--(4,4);
\draw[thick] (0,2)--(4,0);
\def\xa{6/10}
\def\xb{9/10}
\def\xc{15/10}
\def\xd{21/10}
\def\xe{27/10}
\def\xf{33/10}
\def\xg{39/10}
\def\ya{2-\xa/2}
\def\yc{2-\xc/2}
\def\yd{2-\xd/2}
\def\yf{2-\xf/2}
\def\yb{2+\xb/2}
\def\ye{2+\xe/2}
\def\yg{2+\xg/2}
\draw[fill,blue] (\xa,\ya) circle (0.1); \draw[fill,blue]
(\xb,\yb) circle (0.1); \draw[fill,blue] (\xc,\yc) circle (0.1);
\draw[fill,blue] (\xd,\yd) circle (0.1); \draw[fill,blue]
(\xe,\ye) circle (0.1); \draw[fill,blue] (\xf,\yf) circle (0.1);
\draw[fill,blue] (\xg,\yg) circle (0.1);
\end{tikzpicture}
\caption{A drawing of the permutation $4532617$ on the grid for left-unimodal
permutations.}
\label{fig:grid_staircase}
\end{figure}

Let $\st$ be the standardization operation that, given a sequence
of distinct integers, replaces the smallest entry with a $1$, the
second smallest with a $2$, and so on. A permutation $\pi$ {\em contains} another permutation $\sigma$ if there is a subsequence of $\pi$ whose standardization is $\sigma$. For example, $6425173$ contains $2314$ because $\st(4517)=2314$.

Since removing dots from the drawing of a permutation on a grid yields drawings of the permutations that it contains, it is clear that every geometric grid class is closed under pattern containment. Thus, it is characterized
by its set of minimal forbidden patterns, which is always finite, as shown in~\cite{AABRV}.

\section{Background on fine sets}\label{sec:background}

\subsection{Fine sets and Schur-positivity}

\begin{defn}
\begin{enumerate}
\item A set (or multiset)  of combinatorial objects $\BBB$
equipped with a descent map $\Des:\BBB\to 2^{[n-1]}$ is
called {\em set-fine} (or {\em multiset-fine})  if the
quasisymmetric function $\Q(\BBB)$ in Equation~\eqref{eq:QB}
is symmetric and Schur-positive. We use the term {\em fine} to mean set-fine or multiset-fine when there is no confusion.

\item[2.] The (multi)set $\BBB$ is {\em fine for a
complex $\S_n$-representation $\rho$} if
\[
\ch(\chi^\rho)=\Q(\BBB).
\]
\end{enumerate}
\end{defn}

This paper considers fine sets and multisets of permutations and standard Young
tableaux.
Since the sum of Schur-positive symmetric functions is
Schur-positive, we have the following.

\begin{observation}\label{union}
Unions of disjoint fine sets, as well as unions of fine multisets, are fine.
\end{observation}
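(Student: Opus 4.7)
The plan is to derive the observation directly from the additivity of the functional $\Q$, together with the fact that the cone of Schur-positive symmetric functions of a fixed degree is closed under addition. Given the simplicity of the statement, there is no real obstacle here; the task reduces to unpacking the relevant definitions and checking that both closure properties apply.

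First, I would verify the additivity property. Given two (multi)sets $\BBB_1$ and $\BBB_2$ that are either disjoint (as sets) or combined by summing multiplicities (as multisets), Equation~\eqref{eq:QB} yields
\[
\Q(\BBB_1 \cup \BBB_2) = \sum_{b} m(b, \BBB_1 \cup \BBB_2)\, F_{n,\Des(b)} = \Q(\BBB_1) + \Q(\BBB_2),
\]
because the multiplicity of any element in the union equals the sum of its multiplicities in $\BBB_1$ and $\BBB_2$. Note that the disjointness hypothesis is needed in the set case so that this additive formula holds with multiplicities equal to $1$; for multisets, the same identity is automatic once union is interpreted as summing multiplicities.

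Second, I would invoke two closure properties of the target cone. Symmetry is preserved under addition, so $\Q(\BBB_1) + \Q(\BBB_2)$ lies in $\Lambda^n$ whenever both summands do. Moreover, Schur-positivity is preserved under addition: if $\Q(\BBB_i) = \sum_{\lambda \vdash n} c^{(i)}_\lambda s_\lambda$ with $c^{(i)}_\lambda \geq 0$ for $i=1,2$, then
\[
\Q(\BBB_1) + \Q(\BBB_2) = \sum_{\lambda \vdash n} \bigl(c^{(1)}_\lambda + c^{(2)}_\lambda\bigr) s_\lambda
\]
has nonnegative Schur coefficients as well. Combining this with the additivity established above shows that $\BBB_1 \cup \BBB_2$ is fine whenever both $\BBB_1$ and $\BBB_2$ are. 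A trivial induction on the number of summands then extends the result to any finite (disjoint, in the set case) union of fine (multi)sets.
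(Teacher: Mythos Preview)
Your argument is correct and matches the paper's own justification, which simply precedes the observation with the remark that ``the sum of Schur-positive symmetric functions is Schur-positive.'' You have spelled out the additivity of $\Q$ and the closure of the Schur-positive cone in more detail than the paper does, but the approach is identical.
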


We say that a sequence $(a_1, \ldots, a_n)$ of distinct positive
integers is {\em co-unimodal} if there exists $1 \le m\le n$ such
that
\[
a_1 > a_2 > \ldots > a_m < a_{m+1} < \ldots < a_n.
\]
Let $\mu=(\mu_1,\dots,\mu_t)$ be a composition of $n$. A sequence
of $n$ positive integers is called {\em $\mu$-modal} if the first
$\mu_1$ entries form a co-unimodal sequence, the next $\mu_2$
entries form a co-unimodal sequence, and so on. A permutation $\pi
\in \S_n$ is called {\em $\mu$-modal} if the sequence $(\pi(1),
\ldots, \pi(n))$ is $\mu$-modal. An element $b\in \BBB$ is {\em
$\mu$-modal} if there exists a $\mu$-modal permutation $\pi\in
\S_n$, such that $\Des(b)=\Des(\pi)$. Denote by $\BBB_\mu$ the sub(multi)set
of all $\mu$-modal elements in $\BBB$.

\begin{ex}
The standard Young tableau
\[
T=\young(1358,247,6)
\]
is $(3,1,4)$-modal, since $\pi=21346578\in \S_8$ is
$(3,1,4)$-modal and $\Des(T)=\{1,3,5\}=\Des(\pi)$.
\end{ex}

Let $\Comp(n)$ denote the set of all compositions of $n$. For
$\mu\in\Comp(n)$, let $S(\mu) := \{\mu_1, \mu_1+\mu_2, \ldots,
\mu_1+\dots+\mu_{t-1}\}$.

\begin{theorem}[{\cite[Theorem 1.5]{Adin-R}}]\label{m1}
For every set (or multiset)
of combinatorial objects $\BBB$ equipped with a descent
map $\Des:\BBB\to 2^{[n-1]}$, the following are equivalent.
\begin{enumerate}[(i)]
\item $\BBB$ is fine, that is, the quasisymmetric function
$\Q(\BBB)$ is symmetric and Schur-positive.

\item The function $\chi^\BBB : \Comp(n) \longrightarrow \ZZ$
defined by
\[
\chi^\BBB(\mu):=\sum\limits_{b\in \BBB_\mu} m(b,\BBB)
(-1)^{|\Des(b)\setminus S(\mu)|}
\]
is an $\S_n$-character, i.e., it does not depend on the order of
parts in $\mu$ and is a linear combination, with nonnegative
integer coefficients, of the irreducible characters of $\S_n$.

\item There exist a (multi)set partition $\BBB = \BBB_1 \sqcup
\ldots \sqcup \BBB_m$ and $\Des$-preserving bijections from each
$\BBB_i$ to the set of all standard Young tableaux of some shape
$\lambda^{(i)} \vdash n$, for each $1 \le i \le m$.

\item There exists a basis
$\{C_b : b\in \BBB\}$ for an $\S_n$-representation space $V$ such
that the linear action of every simple reflection $s_i$ has the
form
\[
s_i (C_b) =
\begin{cases}
-C_b, &\text{if } i \in \Des (b); \\
C_b + \displaystyle\sum_{b' \in \BBB \,:\, i \in\Des(b')}
a_i(b,b') C_{b'}, &\text{otherwise,}
\end{cases}
\]
for suitable coefficients $a_i(b,b')$.
\end{enumerate}
\end{theorem}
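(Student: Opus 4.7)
The plan is to prove the theorem as the cycle of implications (i) $\Rightarrow$ (iii) $\Rightarrow$ (iv) $\Rightarrow$ (ii) $\Rightarrow$ (i). The single external ingredient I would rely on is \emph{Roichman's rule}: for every $\lambda\vdash n$ and every $\mu\in\Comp(n)$, the value of the irreducible character $\chi^\lambda$ on a permutation of cycle type $\mu$ equals
\[
\chi^\lambda_\mu \;=\; \sum_{T\in\SYT(\lambda) \atop T \text{ is } \mu\text{-modal}} (-1)^{|\Des(T)\setminus S(\mu)|}.
\]
The right-hand side is exactly $\chi^{\SYT(\lambda)}(\mu)$ in the notation of (ii), so Roichman's rule says that (ii) holds for $\BBB=\SYT(\lambda)$ with $\chi^\BBB=\chi^\lambda$; by linearity this will be the bridge that converts Schur expansions into character identities and back.

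For (i) $\Rightarrow$ (iii), write $\Q(\BBB)=\sum_\lambda c_\lambda\, s_\lambda$ with $c_\lambda\in\ZZ_{\ge 0}$. By Proposition~\ref{G1}, this common value equals $\Q(\bigsqcup_\lambda c_\lambda\cdot\SYT(\lambda))$, so $\Des$ has the same multi-distribution on the two (multi)sets; any bijection witnessing this equidistribution, restricted to the preimage of each $\SYT(\lambda^{(i)})$, yields the required partition. For (iii) $\Rightarrow$ (iv), I would invoke the fact that on $\SYT(\lambda)$ there is a classical model of the Specht module $V^\lambda$ with basis $\{C_T:T\in\SYT(\lambda)\}$ in which the simple reflections act by exactly the formula in (iv) (for example, the descent basis built from the coinvariant algebra, or the appropriate Kazhdan--Lusztig construction). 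Transporting this action along each $\Des$-preserving bijection $\BBB_i \to \SYT(\lambda^{(i)})$ from (iii), and taking a direct sum over $i$, equips the free module on $\BBB$ with an $\S_n$-representation whose basis satisfies the description in (iv).

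For (iv) $\Rightarrow$ (ii), pick any $w_\mu\in\S_n$ of cycle type $\mu$ together with a reduced factorization into simple reflections compatible with the block structure $S(\mu)$. Computing the trace of the resulting product on $\{C_b\}$ via the recursive rule of (iv), I would show by induction on length that the off-diagonal corrections cancel in the trace, while each $\mu$-modal basis element $b$ contributes $(-1)^{|\Des(b)\setminus S(\mu)|}$; hence $\chi^\rho(w_\mu)=\chi^\BBB(\mu)$, so $\chi^\BBB$ is an $\S_n$-character. For (ii) $\Rightarrow$ (i), decompose $\chi^\BBB=\sum_\lambda c_\lambda\,\chi^\lambda$ with $c_\lambda\in\ZZ_{\ge 0}$, apply Roichman's rule to each $\chi^\lambda$, and compare with the defining expression for $\chi^\BBB$: the resulting identity across all $\mu\in\Comp(n)$ inverts to force that $\Des$ has the same multi-distribution on $\BBB$ as on $\bigsqcup_\lambda c_\lambda\cdot\SYT(\lambda)$, whence $\Q(\BBB)=\sum_\lambda c_\lambda s_\lambda$ by Proposition~\ref{G1}. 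The main technical obstacle is the trace calculation in (iv) $\Rightarrow$ (ii): the cancellation of the off-diagonal contributions depends on choosing the ``right'' reduced word for $w_\mu$, and the bookkeeping is essentially the same combinatorial induction that underlies the standard proof of Roichman's rule itself, which is what makes (ii) the right reformulation of fineness in the first place.
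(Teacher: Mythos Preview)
This theorem is not proved in the present paper; it is imported as background from \cite[Theorem~1.5]{Adin-R}, with only the remark afterward that the proof there extends from sets to multisets. So there is no in-paper argument to compare your proposal against.

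That said, your cycle (i)$\Rightarrow$(iii)$\Rightarrow$(iv)$\Rightarrow$(ii)$\Rightarrow$(i) and the use of Roichman's rule as the bridge are precisely the strategy of the original source, and the steps (i)$\Rightarrow$(iii) and (iii)$\Rightarrow$(iv) are fine as you sketch them. Two places would need genuine work. In (iv)$\Rightarrow$(ii), ``the off-diagonal corrections cancel in the trace'' is the crux and is not automatic for an arbitrary reduced word: one must take $w_\mu$ to be the specific product of block-cycles written in the natural order of simple reflections, and then show inductively that only $\mu$-modal basis elements can return to themselves along the product, each with the sign $(-1)^{|\Des(b)\setminus S(\mu)|}$. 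You flag this correctly as the main obstacle, but it is substantial. In (ii)$\Rightarrow$(i), the phrase ``the resulting identity across all $\mu\in\Comp(n)$ inverts'' hides a lemma: you are asserting that the linear map sending a descent-set distribution $(d_J)_{J\subseteq[n-1]}$ to the function $\mu\mapsto\sum_J [J\text{ is }\mu\text{-modal}](-1)^{|J\setminus S(\mu)|}d_J$ is injective. This is true, but it is not obvious and must be proved (for instance by exhibiting a triangular structure), not merely asserted.
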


\begin{remark}\label{clarification}
By Lemma~\ref{Knuth1}, the distribution of the descent set over
all standard Young tableaux of some shape $\lambda$ is equal to
its distribution over all permutations in a Knuth class of shape
$\lambda$. Thus, condition (iii) in Theorem~\ref{m1} is
equivalent to the existence of a (multi)set partition $\BBB =
\BBB_1 \sqcup \ldots \sqcup \BBB_m$ and $\Des$-preserving
bijections from each $\BBB_i$ to the set of permutations in a
Knuth class of shape $\lambda^{(i)} \vdash n$, for each $1 \le i
\le m$.
\end{remark}

\begin{theorem}[{\cite[Theorem 3.2]{CA}}]\label{m2}
A set (or multiset) $\BBB$ is fine for an
$\S_n$-representation $\rho$, namely $\ch(\chi^\rho)=\Q(\BBB)$, if
and only if
\[
\chi^\rho(\mu)=\sum\limits_{b\in \BBB_\mu}
(-1)^{|\Des(b)\setminus S(\mu)|}
\]
for every partition $\mu\vdash n$.
\end{theorem}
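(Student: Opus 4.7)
The plan is to reduce Theorem~\ref{m2} to \emph{Roichman's character formula} for the irreducible characters of $\S_n$, which asserts that for every $\lambda,\mu\vdash n$,
\[
\chi^\lambda(\mu) \;=\; \sum_{\substack{T\in \SYT(\lambda)\\ T \text{ is $\mu$-modal}}} (-1)^{|\Des(T)\setminus S(\mu)|}.
\]
Given this identity as input, the theorem will follow by matching fundamental quasisymmetric expansions of $\Q(\BBB)$ and $\ch(\chi^\rho)$ through the Schur basis.

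For the forward direction, suppose $\ch(\chi^\rho)=\Q(\BBB)$. Since $\ch$ is the algebra isomorphism sending $\chi^\lambda\mapsto s_\lambda$ and $\Q(\BBB)$ is consequently Schur-positive, write $\Q(\BBB)=\sum_{\lambda\vdash n} b_\lambda s_\lambda$ with $b_\lambda\in\bbn$, so that $\chi^\rho=\sum_\lambda b_\lambda\,\chi^\lambda$. By Proposition~\ref{G1} for straight shapes, $s_\lambda=\sum_{T\in\SYT(\lambda)} F_{n,\Des(T)}$, and equating the coefficient of each $F_{n,D}$ in the identity $\Q(\BBB)=\sum_\lambda b_\lambda\,s_\lambda$ yields
\[
\sum_{\substack{b\in\BBB\\ \Des(b)=D}} m(b,\BBB) \;=\; \sum_\lambda b_\lambda\,|\{T\in\SYT(\lambda):\Des(T)=D\}|
\]
for every $D\subseteq[n-1]$. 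Plugging this into Roichman's formula and rearranging,
\begin{align*}
\chi^\rho(\mu)
&= \sum_\lambda b_\lambda\,\chi^\lambda(\mu)
= \sum_{D \text{ is $\mu$-modal}} (-1)^{|D\setminus S(\mu)|}\sum_\lambda b_\lambda\,|\{T\in\SYT(\lambda):\Des(T)=D\}|\\
&= \sum_{D \text{ is $\mu$-modal}} (-1)^{|D\setminus S(\mu)|}\!\!\sum_{\substack{b\in\BBB\\ \Des(b)=D}}\!\! m(b,\BBB)
= \sum_{b\in\BBB_\mu} m(b,\BBB)\,(-1)^{|\Des(b)\setminus S(\mu)|},
\end{align*}
the desired character identity.

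For the converse, assume the character identity holds for every $\mu\vdash n$. Then the function $\chi^\BBB$ of Theorem~\ref{m1}(ii) coincides with the genuine character $\chi^\rho$ on every partition, and is therefore itself an $\S_n$-character, so $\BBB$ is fine by Theorem~\ref{m1}. In particular there exists a representation $\sigma$ with $\ch(\chi^\sigma)=\Q(\BBB)$. Applying the forward direction to $\sigma$ gives $\chi^\sigma(\mu)=\chi^\BBB(\mu)=\chi^\rho(\mu)$ for every partition $\mu\vdash n$, so $\chi^\sigma=\chi^\rho$ as class functions, and the injectivity of $\ch$ forces $\ch(\chi^\rho)=\ch(\chi^\sigma)=\Q(\BBB)$.

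The main obstacle in this strategy is Roichman's character formula itself, a nontrivial combinatorial theorem whose standard proof goes by induction through the Hecke algebra or by a delicate border-strip analysis; this is where all the representation-theoretic content really lives. Once that formula is granted, the remainder of the argument reduces to the $F$-expansion identity $s_\lambda=\sum_T F_{n,\Des(T)}$ from Proposition~\ref{G1} and the injectivity of the Frobenius characteristic map, both of which are elementary.
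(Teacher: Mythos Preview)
The paper does not prove Theorem~\ref{m2}; it is quoted from~\cite{CA}. Your forward direction is correct and is the standard reduction to Roichman's character formula via the fundamental expansion $s_\lambda=\sum_{T} F_{n,\Des(T)}$.

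The converse, however, has a gap. You claim that because $\chi^\BBB$ agrees with $\chi^\rho$ on every partition it ``is therefore itself an $\S_n$-character'', and then invoke Theorem~\ref{m1}. But condition~(ii) of Theorem~\ref{m1} concerns $\chi^\BBB$ as a function on all of $\Comp(n)$, and requires in particular that it be invariant under reordering of parts; agreement on partitions alone does not give this. For multisets the gap is in fact essential: take $\BBB=\{\{123,123,123,213,213,213\}\}$ in $\S_3$ and let $\rho$ be the regular representation. Then $\Q(\BBB)=3F_{3,\emptyset}+3F_{3,\{1\}}$ is not symmetric, yet the right-hand side of the character identity equals $0,0,6$ at $\mu=(3),(2,1),(1^3)$, exactly $\chi^{\mathrm{reg}}$. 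So the converse as literally stated fails for multisets; for sets a correct argument must either establish the composition-invariance of $\chi^\BBB$ separately, or show directly that the partition values pin down $\Q(\BBB)$ among $F$-nonnegative quasisymmetric functions.
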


\begin{remark}
Theorems~\ref{m1} and~\ref{m2} were proved in~\cite{Adin-R}
and~\cite{CA} for sets. It can be checked that the proofs
hold for multisets as well.
\end{remark}

The next proposition lists a few examples of set-fine subsets of
$\S_n$.

\begin{proposition}[{\cite[Prop. 9.5]{Adin-R}}]\label{fine_list}
The following subsets of $\S_n$ are fine:
\begin{enumerate}[(i)]
\item all permutations of fixed Coxeter length (equivalently,
fixed inversion number);
\item subsets closed under
conjugation (e.g., conjugacy classes, the set of involutions, and the set of permutations
with fixed cycle number);
\item subsets closed under Knuth
relations (e.g., Knuth classes, inverse descent classes, and the set of
permutations with fixed inverse descent number).
\end{enumerate}
\end{proposition}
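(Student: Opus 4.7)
I would handle the three parts in order of increasing difficulty. Parts (ii) and (iii) reduce immediately to established Schur-positivity results via Observation~\ref{union}; part (i) requires new input.

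\medskip

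For (iii), any set closed under Knuth relations is, by the theorem in Section~\ref{prel_sec:shuffles} characterizing Knuth classes, a disjoint union of Knuth classes. For a single Knuth class $\mathcal{C}_T$ of shape $\lambda$, the map $\pi\mapsto Q_\pi$ restricts to a bijection $\mathcal{C}_T\to\SYT(\lambda)$ that preserves descent sets by Lemma~\ref{Knuth1}; Proposition~\ref{G1} then yields $\Q(\mathcal{C}_T)=s_\lambda$. For (ii), any set closed under conjugation is a disjoint union of conjugacy classes, and each conjugacy class is fine by the theorem of Gessel--Reutenauer quoted in the introduction.

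\medskip

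Part (i) is subtler, since $\BBB_k:=\{\pi\in\S_n:\operatorname{inv}(\pi)=k\}$ is neither a union of Knuth classes nor a union of conjugacy classes. My plan is to prove the generating-function identity
\[
\sum_{\pi\in\S_n} q^{\operatorname{inv}(\pi)} F_{n,\Des(\pi)} = \sum_{\lambda\vdash n} s_\lambda \sum_{T\in\SYT(\lambda)} q^{\maj(T)},
\]
whose right-hand side is Schur-positive in $q$ coefficient by coefficient, so that extracting the coefficient of $q^k$ gives Schur-positivity of $\Q(\BBB_k)$. First, via RSK and the identities $\Des(\pi)=\Des(Q_\pi)$ and $\maj(\pi^{-1})=\maj(P_\pi)$ from Lemma~\ref{Knuth1}, combined with Proposition~\ref{G1} applied to each fixed recording shape, one obtains the analogous identity with the statistic $\maj(\pi^{-1})$ in place of $\operatorname{inv}(\pi)$ on the left. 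Second, a $\Des$-preserving bijection on $\S_n$ converting $\maj(\pi^{-1})$ to $\operatorname{inv}(\pi)$---obtained by applying Foata's second fundamental transformation to $\pi^{-1}$ and reinverting---transforms this into the desired identity.

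\medskip

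The hard part is the second step: the required $\Des$-preserving bijection depends on the classical fact of Foata--Sch\"utzenberger that $\operatorname{inv}$ and $\maj$ are equidistributed on each inverse descent class $D_{n,J}^{-1}$, a nontrivial ingredient whose proof itself is a combinatorial construction. A representation-theoretic alternative would be to apply Theorem~\ref{m2} and compare $\chi^{\BBB_k}(\mu)$ with the character of the degree-$k$ graded component of the coinvariant algebra $\CC[x_1,\dots,x_n]/\langle e_1,\dots,e_n\rangle$, whose Frobenius characteristic is the classical expression $\sum_\lambda s_\lambda\cdot|\{T\in\SYT(\lambda):\maj(T)=k\}|$; this route bypasses the explicit combinatorial bijection at the cost of invoking graded $\S_n$ representation theory.
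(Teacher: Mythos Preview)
The paper does not prove Proposition~\ref{fine_list}; it is stated with a citation to \cite[Prop.~9.5]{Adin-R} and no argument is given here. So there is no in-paper proof to compare against, and your proposal should be judged on its own.

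Your treatment of (ii) and (iii) is correct and essentially the standard one: Knuth-closed sets decompose into Knuth classes, each contributing a single Schur function via Lemma~\ref{Knuth1} and Proposition~\ref{G1}; conjugation-closed sets decompose into conjugacy classes, each Schur-positive by Gessel--Reutenauer.

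Your argument for (i) is also correct. The RSK step gives
\[
\sum_{\pi\in\S_n} q^{\maj(\pi^{-1})} F_{n,\Des(\pi)}=\sum_{\lambda\vdash n}\Bigl(\sum_{P\in\SYT(\lambda)} q^{\maj(P)}\Bigr)s_\lambda,
\]
and the passage from $\maj(\pi^{-1})$ to $\operatorname{inv}(\pi)$ is exactly the Foata--Sch\"utzenberger refinement: Foata's bijection $\Phi$ satisfies $\operatorname{inv}(\Phi(\sigma))=\maj(\sigma)$ and preserves $\Des(\sigma^{-1})$, so $\pi\mapsto(\Phi(\pi^{-1}))^{-1}$ is a $\Des$-preserving bijection taking $\maj(\pi^{-1})$ to $\operatorname{inv}$. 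You correctly flag this as the nontrivial external input. Your alternative route through the graded coinvariant algebra is equally valid and is in fact closer in spirit to how \cite{Adin-R} treats such statements, since that paper works systematically with character formulas of the type in Theorem~\ref{m2}.
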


\subsection{Arc permutations}\label{sec:arc}

An example of a fine set that does not fall in any of the cases in
Proposition~\ref{fine_list} is the set of arc permutations in
$\S_n$. In the following definition, an interval in $\bbz_n$ refers to the set obtained by reducing a interval in $\bbz$ modulo $n$.

\begin{defn}
A permutation $\pi\in \S_n$ is an {\em arc permutation}
if, for every $1\le j\le n$, the first $j$ letters in $\pi$ form
an interval in $\bbz_n$. Denote by $\A_n$ the set of arc permutations in $\S_n$.
\end{defn}

For example, the permutation $12543$ is an arc permutation in
$\S_5$, but $125436$ is not an arc permutation in $\S_6$, since
$\{1,2,5\}$ is an interval in $\bbz_5$ but not in $\bbz_6$.

Arc permutations were introduced in the study of flip graphs of polygon triangulations~\cite{TFT2}.
Some combinatorial properties of these permutations, including their description as a union of grid classes
and their descent set distribution, are studied in~\cite{ER1}. In particular, it follows from~\cite[Theorem 5]{ER1}
that $\A_n$ is a Schur-positive set.  One of
the goals of this paper is to explain this result by providing a
general recipe for constructing Schur-positive subsets of $\S_n$.

\section{Elementary examples of Schur-positive grid classes}\label{sec:elementary}

We remark that, by definition, grid classes are always sets of permutations, that is, there are no multiplicities.

\subsection{One-column grid classes}\label{sec:one-col}

Grid classes whose matrix consists of one column are particularly
interesting because they are unions of inverse descent
classes, as we will see in Proposition~\ref{one-column_descent_classes},
and thus fine sets by Lemma~\ref{lem:DJ}. Since we can ignore zero entries in the matrix without
changing the grid class, we will assume that the entries of the
matrix are in $\{1,-1\}$. If $\vv\in\{1,-1\}^k$, let $\G^\vv$
denote the one-column grid class where the entries of the matrix
are given by $\vv$ from bottom to top, and let $\G^\vv_n=\G^\vv\cap\S_n$. Sometimes we only write the
signs of the entries of $\vv$ for convenience. For example,
$\G^{-+}=\LL$ is the class of left-unimodal permutations, and
$\G^{++}$ is the class of shuffles of two increasing sequences, with all the letters in one sequence being smaller than all the letters in the other. In
general, we denote by $\G^{+^k}$ the class of shuffles of $k$
increasing sequences, each sequence consisting of a consecutive set of letters.

The following observation will be useful throughout this section.

\begin{observation}\label{obs:inverse}
For a given grid class $\mathcal{H}$, the set $\{\pi^{-1}:\pi\in\mathcal{H}\}$ is a grid class, whose grid is obtained by reflecting the grid for $\mathcal{H}$ across the southwest-northeast diagonal.
\end{observation}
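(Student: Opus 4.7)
The plan is to deduce the observation directly from the definition of a grid class in terms of drawings. First I would translate the statement into geometric language: for a permutation $\pi\in\S_n$ with plot $\{(i,\pi(i)):i\in[n]\}$, the plot of its inverse is $\{(\pi(i),i):i\in[n]\}$, which is exactly the image of the plot of $\pi$ under the reflection $\rho:(x,y)\mapsto(y,x)$ across the SW--NE diagonal. Since the extraction rule in the definition of $\G_n(M)$ (label dots by increasing $y$-coordinate, then read them by increasing $x$-coordinate) interchanges the two coordinates when the roles of $x$ and $y$ are swapped, applying $\rho$ to a drawing that encodes $\pi$ produces a drawing encoding $\pi^{-1}$.

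Next I would argue that $\rho$ carries the grid for $M$ to the grid of some matrix $M'$ with entries in $\{0,1,-1\}$. The two necessary ingredients are that $\rho$ preserves both slopes of interest (it sends $y=x+c$ to $y=x-c$ and fixes each line $y=-x+c$), and that it swaps rows and columns of the underlying rectangular cell decomposition. Hence the reflected collection of segments is again a union of $\pm1$-slope segments laid out in a rectangular pattern, so it is itself the grid of some matrix $M'$; explicitly, if $M$ has dimensions $k\times\ell$, then $M'$ has dimensions $\ell\times k$ with $M'_{i,j}=M_{k-j+1,\,\ell-i+1}$, although this explicit description is not needed for the statement.

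Combining these two steps, a drawing of $\pi$ on the grid for $M$ corresponds under $\rho$ to a drawing of $\pi^{-1}$ on the grid for $M'$, and conversely. This will give $\{\pi^{-1}:\pi\in\G(M)\}=\G(M')$, with $\G(M')$ identified by construction as the SW--NE reflection of $\G(M)$. The only place requiring a small amount of care, and thus the main (minor) obstacle, is the bookkeeping around the extraction rule: one must verify that exchanging the roles of the two coordinates really does convert the pair \emph{(label by $y$, read by $x$)} into \emph{(label by $x$, read by $y$)}, and that this reversal is precisely what converts a permutation into its inverse in one-line notation.
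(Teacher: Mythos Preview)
Your argument is correct and is precisely the natural justification for this statement. In the paper this is stated as an Observation with no proof at all—it is treated as immediate from the definition of $\G_n(M)$—so your write-up is simply an explicit unpacking of what the authors leave to the reader; the key point, that swapping the roles of $x$ and $y$ in the extraction rule converts $\pi$ to $\pi^{-1}$ while sending the grid to its diagonal reflection, is exactly the content they are relying on.
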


We can easily characterize $\G^{+^k}$ and $\G^{-^k}$ as unions of
inverse descent classes, allowing us to express $\Q(\G^{+^k})$ and
$\Q(\G^{-^k})$ in terms of skew Schur functions, as follows.

\begin{observation}\label{obs:Gk}
For every $\pi\in \S_n$ and $k\ge1$,
\[
\pi\in \G^{+^k}\ \Longleftrightarrow\ \des(\pi^{-1})\le k-1
\]
and
\[
\pi\in \G^{-^k}\ \Longleftrightarrow\ \des(\pi^{-1})\ge n-k.
\]
\end{observation}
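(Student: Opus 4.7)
The plan is to translate the geometric definition of $\G^{+^k}$ directly into a condition on the descent set of $\pi^{-1}$. The key observation is that each segment of the column grid forces a monotone subsequence of labels on a consecutive interval of values, and the vertical stacking of segments forces those intervals to be consecutive.

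First I would unpack the geometric description of $\G^{+^k}$. The associated grid consists of $k$ stacked segments of slope $+1$. On a single segment, reading dots left-to-right gives both increasing $x$- and increasing $y$-coordinate, so the labels appear in increasing order. Since higher segments lie strictly above lower ones, the labels placed on segment $j$ (counted from the bottom) form a consecutive interval of integers, and every label on segment $j$ is smaller than every label on segment $j+1$. Hence $\pi\in \G^{+^k}$ iff $[n]$ can be partitioned into $k$ consecutive intervals $I_1<I_2<\cdots<I_k$ (possibly empty) such that, for each $j$, the entries of $\pi$ whose values lie in $I_j$ appear in increasing order within $\pi$.

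Next I would rephrase this in terms of $\pi^{-1}$. Writing $I_j=(a_{j-1},a_j]\cap \bbz$ with $0=a_0\le a_1\le\cdots\le a_k=n$, the condition becomes
\[
\pi^{-1}(a_{j-1}+1)<\pi^{-1}(a_{j-1}+2)<\cdots<\pi^{-1}(a_j)\qquad (1\le j\le k),
\]
which is equivalent to $\Des(\pi^{-1})\subseteq\{a_1,\dots,a_{k-1}\}$. So $\pi\in\G^{+^k}$ precisely when some $(k-1)$-element subset of $[n-1]$ contains $\Des(\pi^{-1})$, which is just $\des(\pi^{-1})\le k-1$. Conversely, given any $\pi$ with $\des(\pi^{-1})\le k-1$, I would pad $\Des(\pi^{-1})$ to any $(k-1)$-element set $\{a_1<\cdots<a_{k-1}\}\subseteq[n-1]$; this produces the required interval partition and hence a placement on the grid.

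For the second equivalence I would run the identical argument with slope $+1$ replaced by slope $-1$: now each segment forces a \emph{decreasing} subsequence of labels on a consecutive interval of values, so $\pi^{-1}$ is decreasing on each $(a_{j-1},a_j]$, meaning $\pi^{-1}$ has a descent at every position of $[n-1]\setminus\{a_1,\dots,a_{k-1}\}$. Therefore $\pi\in\G^{-^k}$ iff at most $k-1$ elements of $[n-1]$ are non-descents of $\pi^{-1}$, i.e., $\des(\pi^{-1})\ge (n-1)-(k-1)=n-k$. There is no serious obstacle; the only point requiring care is allowing empty intervals $I_j$ when $\des(\pi^{-1})$ is strictly less than $k-1$ (respectively strictly greater than $n-k$), so that the partition into exactly $k$ blocks can always be realized on the grid.
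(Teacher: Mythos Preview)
Your argument is correct. The paper states this as an Observation without proof, treating it as immediate from the definitions; your write-up is exactly the natural unpacking of the geometric description into the condition on $\Des(\pi^{-1})$, and nothing more is needed.
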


In the following proposition, the height and the width of a ribbon refer to the number of rows and columns, respectively.

\begin{proposition}\label{prop:QGk}
$\Q(\G^{+^k})$ (resp. $\Q(\G^{-^k})$) is the multiplicity-free sum of symmetric
functions of ribbons of height (resp. width) at most $k$. In terms of Schur functions,
\begin{align*}\Q(\G^{+^k}_n)&=\sum_{\lambda\vdash n} |\{P\in\SYT(\lambda):|\Des(P)|\le k-1\}|\, s_\lambda,\\
\Q(\G^{-^k}_n)&=\sum_{\lambda\vdash n}
|\{P\in\SYT(\lambda):|\Des(P)|\ge n-k\}|\, s_\lambda.
\end{align*}
\end{proposition}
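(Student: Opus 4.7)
The plan is to reduce the proposition to a disjoint union of inverse descent classes and then apply Lemma~\ref{lem:DJ}. The key observation (already stated as Observation~\ref{obs:Gk}) is that membership in $\G^{+^k}_n$ (respectively $\G^{-^k}_n$) is controlled purely by $\des(\pi^{-1})$, so the grid class decomposes cleanly along inverse descent sets.

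First I would write the disjoint decomposition
\[
\G^{+^k}_n \;=\; \bigsqcup_{\substack{J\subseteq[n-1]\\ |J|\le k-1}} D_{n,J}^{-1},
\qquad
\G^{-^k}_n \;=\; \bigsqcup_{\substack{J\subseteq[n-1]\\ |J|\ge n-k}} D_{n,J}^{-1},
\]
which is immediate from Observation~\ref{obs:Gk} together with the fact that $D_{n,J}^{-1}=\{\pi\in\S_n:\Des(\pi^{-1})=J\}$. Summing the quasisymmetric functions and invoking Lemma~\ref{lem:DJ}, which gives $\Q(D_{n,J}^{-1})=s_{Z_{n,J}}$, yields
\[
\Q(\G^{+^k}_n)=\sum_{|J|\le k-1} s_{Z_{n,J}},
\qquad
\Q(\G^{-^k}_n)=\sum_{|J|\ge n-k} s_{Z_{n,J}}.
\]

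Next I would translate the cardinality bound on $J$ into the geometric shape statement: by the construction of $Z_{n,J}$ in Definition~\ref{zigzag_subset}, each $i\in J$ produces a vertical step from cell $i$ to cell $i+1$, so the ribbon $Z_{n,J}$ has height $|J|+1$ and width $n-|J|$. Therefore $|J|\le k-1$ is equivalent to height at most $k$, and $|J|\ge n-k$ is equivalent to width at most $k$. Moreover, the map $J\mapsto Z_{n,J}$ is a bijection between subsets of $[n-1]$ and ribbons of size $n$, so both sums are multiplicity-free over the desired sets of ribbons. This proves the first assertion.

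For the Schur expansion, I would extract the coefficient $\langle\cdot,s_\lambda\rangle$ termwise using the second formula in Lemma~\ref{lem:DJ}, obtaining
\[
\langle\Q(\G^{+^k}_n),s_\lambda\rangle=\sum_{|J|\le k-1}\bigl|\{P\in\SYT(\lambda):\Des(P)=J\}\bigr|
=\bigl|\{P\in\SYT(\lambda):|\Des(P)|\le k-1\}\bigr|,
\]
and similarly for $\G^{-^k}_n$ with $|\Des(P)|\ge n-k$. No obstacle is expected here: the proof is essentially a bookkeeping exercise combining Observation~\ref{obs:Gk}, Lemma~\ref{lem:DJ}, and the elementary identification of ribbon dimensions from the subset $J$. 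The only subtlety worth flagging is making sure that the width/height convention in the statement matches the convention for $Z_{n,J}$ coming from Definition~\ref{zigzag_subset}, which is a quick check.
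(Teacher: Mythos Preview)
Your proposal is correct and follows essentially the same route as the paper: decompose $\G^{\pm^k}_n$ into inverse descent classes via Observation~\ref{obs:Gk}, apply Lemma~\ref{lem:DJ} to obtain the ribbon sum, and read off the Schur expansion from the second formula in that lemma. You are actually more explicit than the paper in verifying the height/width translation from $|J|$, which the paper leaves implicit.
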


\begin{proof}
By Observation~\ref{obs:Gk},
$$\G^{+^k}_n=\bigsqcup_{J:|J|<k} D_{n,J}^{-1}.$$
Thus, by Lemma~\ref{lem:DJ},
$$\Q(\G^{+^k}_n)=\sum_{J:|J|<k} \Q(D_{n,J}^{-1})=\sum_{J:|J|<k} \Q(SYT(Z_{n,J}))=\sum_{J:|J|<k} s_{Z_{n,J}}$$
which proves the first statement for $\Q(\G^{+^k})$. The statement for $\Q(\G^{-^k})$ is proved similarly by taking the sum over sets $J$ with $|J|\ge n-k$. The expressions in terms of  Schur functions also follow from Lemma~\ref{lem:DJ}.
\end{proof}

We obtain a particularly simple expression when $k=2$.

\begin{corollary}\label{cor:QSh} We have
$$\Q(\Sh_n)=s_n+\sum_{a=1}^{\lfloor\frac{n}{2}\rfloor} (n-2a+1)s_{n-a,a}.$$
\end{corollary}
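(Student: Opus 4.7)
The plan is to start from the expansion provided by Proposition~\ref{prop:QGk} with $k=2$, namely
\[
\Q(\Sh_n)=\sum_{\lambda\vdash n}|\{P\in\SYT(\lambda):|\Des(P)|\le 1\}|\,s_\lambda,
\]
and then evaluate the coefficient of $s_\lambda$ combinatorially by counting SYT with at most one descent. The identity to prove is then just the statement that shapes with more than two rows contribute nothing, the one-row shape contributes $1$, and the shape $(n-a,a)$ contributes exactly $n-2a+1$.

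First I would establish the following lemma: every $P\in\SYT(\lambda)$ with $\lambda$ having $r$ nonempty rows has at least $r-1$ descents. For each $j=2,\dots,r$, let $c_j=P(j,1)$. Since row $j$ is increasing from the left and each row below $j$ has its smallest entry (in column $1$) bigger than $c_j$, the value $c_j$ is the minimum entry appearing in rows $j,j+1,\dots,r$; hence $c_j-1$ lies in some row strictly above $j$, which makes $c_j-1$ a descent of $P$. The $r-1$ descents $c_2-1,\dots,c_r-1$ are distinct, so $|\Des(P)|\ge r-1$. Consequently, only partitions with at most two parts can contribute to the sum.

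Next I would enumerate the contributing shapes. The shape $(n)$ gives the unique standard tableau with empty descent set, contributing the summand $s_n$. For $\lambda=(n-a,a)$ with $1\le a\le\lfloor n/2\rfloor$, I would observe that in a two-row SYT the descents are in bijection with maximal runs of consecutive integers appearing in the second row: each such run begins at some value $b$, and the descent it contributes is $b-1$. Hence requiring exactly one descent forces the second row to be a single block of consecutive integers $\{k+1,k+2,\dots,k+a\}$, while the first row is the complement $1,2,\dots,k,k+a+1,\dots,n$ in increasing order. Checking the column-strict condition at position $(2,j)$ for $j=1,\dots,a$ reduces to the single inequality $k\ge a$, and the upper bound $k\le n-a$ comes from requiring the block to fit within $[n]$. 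This produces exactly $n-2a+1$ valid tableaux.

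Assembling these counts in the expansion from Proposition~\ref{prop:QGk} yields the claimed formula. I do not foresee any serious obstacle: the one content-bearing step is the descent lower bound in terms of the number of rows, and the rest is an elementary case analysis of two-row SYT together with a short verification of the column conditions.
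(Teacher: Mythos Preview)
Your proof is correct and follows exactly the approach of the paper: apply Proposition~\ref{prop:QGk} with $k=2$ and count SYT with at most one descent. The paper's own proof is terse and simply asserts the counts, whereas you supply the justification (the $r-1$ descents coming from the first column and the explicit parametrization of two-row tableaux with a single descent), but the strategy is identical.
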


\begin{proof}
To apply Proposition~\ref{prop:QGk}, we find all SYT with at most one descent. The one-row tableau is the only one with no descents, and for each
$1\le a\le \lfloor\frac{n}{2}\rfloor$, there are $n-2a+1$ tableaux
$P$ with $\shape(P)=(n-a,a)$ and one descent.
\end{proof}

Recall the class of left-unimodal permutations from
Section~\ref{sec:grid}, and let $\LL_n=\LL\cap\S_n$.

\begin{lemma}\label{lem:QLL} We have
$$\Q(\LL_n)=\sum_{k=0}^{n-1} s_{n-k,1^k}.$$
\end{lemma}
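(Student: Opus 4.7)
The plan is to reduce the identity to two simple descent-preserving bijections and then invoke Gessel's formula $\Q(\SYT(\lambda))=s_\lambda$ from Proposition~\ref{G1}.

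First, I would verify that the descent map $\Des\colon \LL_n \to 2^{[n-1]}$ is a bijection. For each $j\ge 2$, the left-unimodal condition forces $\pi(j)$ to equal either one more than the maximum, or one less than the minimum, of $\pi(1),\dots,\pi(j-1)$; a short case analysis (depending on whether $\pi(j-1)$ is itself the running max or min) shows that $j-1\in\Des(\pi)$ precisely when the latter ``down-step'' occurs. Since each down-step decreases the running minimum by $1$ and the minimum must reach $1$, we have $\pi(1)=|\Des(\pi)|+1$. Conversely, every $J\subseteq[n-1]$ arises from exactly one such $\pi$: set $\pi(1)=|J|+1$ and make a down-step at position $j+1$ iff $j\in J$. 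This gives $\Q(\LL_n)=\sum_{J\subseteq[n-1]} F_{n,J}$.

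Second, I would construct a descent-preserving bijection from $2^{[n-1]}$ onto $\bigsqcup_{k=0}^{n-1}\SYT(n-k,1^k)$. Given $J\subseteq[n-1]$, let $T_J$ be the hook SYT of shape $(n-|J|,1^{|J|})$ whose first column contains $\{1\}\cup\{j+1:j\in J\}$, with the remaining elements of $[n]$ placed along the first row in increasing order. The map $J\mapsto T_J$ is a bijection by cardinality, since $\sum_{k=0}^{n-1}\binom{n-1}{k}=2^{n-1}=|2^{[n-1]}|$. The key claim is that $\Des(T_J)=J$.

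Combining the two bijections with Proposition~\ref{G1} then yields
\[
\Q(\LL_n)=\sum_{J\subseteq[n-1]} F_{n,J}=\sum_{k=0}^{n-1}\sum_{T\in\SYT(n-k,1^k)} F_{n,\Des(T)}=\sum_{k=0}^{n-1} s_{n-k,1^k}.
\]

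The only step requiring care is verifying $\Des(T_J)=J$. Because both the first row and the first column of $T_J$ are filled in increasing order, a routine case split on the locations of $i$ and $i+1$ (first row vs.\ first column) shows that $i+1$ lies strictly below $i$ in $T_J$ if and only if $i+1$ belongs to the first column, which by construction is equivalent to $i\in J$. This is the main (mild) obstacle; the rest of the argument is essentially bookkeeping.
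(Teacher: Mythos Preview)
Your argument is correct, but it takes a different route from the paper's proof. The paper partitions $\LL_n$ according to the descent set of the \emph{inverse}: using Observation~\ref{obs:inverse} it observes that $\LL_n=\bigsqcup_{k=0}^{n-1} D_{n,[k]}^{-1}$, and then invokes Lemma~\ref{lem:DJ} (the RSK-based formula $\Q(D_{n,J}^{-1})=s_{Z_{n,J}}$) together with the fact that the only SYT with descent set $[k]$ is the hook with $1,\dots,k+1$ down its first column. You instead partition $\LL_n$ according to $\Des(\pi)$ itself, proving that $\Des$ is a bijection onto $2^{[n-1]}$ and then matching $\sum_{J} F_{n,J}$ with $\bigsqcup_k \SYT(n-k,1^k)$ via an explicit hook construction. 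Your approach is more elementary and self-contained (it only needs Proposition~\ref{G1}, not the full strength of Lemma~\ref{lem:DJ}), while the paper's is shorter once the inverse-descent-class machinery is in place and fits more naturally into the paper's general treatment of one-column grid classes.
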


\begin{proof}
By Observation~\ref{obs:inverse}, $\LL_n=\G^{-+}_n$ is a disjoint
union of inverse descent classes. Specifically,
\begin{equation}\label{Eq:LL}
\LL_n=\bigsqcup_{k=0}^{n-1} D_{n,[k]}^{-1}.
\end{equation}
 We apply
Lemma~\ref{lem:DJ}, noting that the only tableau $P$ with
$\Des(P)=[k]$ is the hook with entries $1,2,\dots,k,k+1$ in its
first column.
\end{proof}

General one-column grid classes can be expressed as unions of inverse descent classes as follows.

\begin{proposition}\label{one-column_descent_classes}
For every $\vv\in \{+,-\}^k$,
$$
\G^\vv_n=\S_n\setminus \bigsqcup_{\uu\in \{+,-\}^{n-1}\atop \vv\le \uu} D_{n,J_\uu}^{-1} =
\bigsqcup_{\uu\in \{+,-\}^{n-1}\atop \vv\nleq \uu} D_{n,J_\uu}^{-1},
$$
where $\vv\le\uu$ denotes that $\vv$ is a subsequence of $\uu$, and $J_\uu:=\{i:\
u_i=+\}$. Consequently, by Lemma~\ref{lem:DJ}, $\G^\vv_n$ is a fine set.
\end{proposition}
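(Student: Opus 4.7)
The plan is to first dispatch the trivial second equality and then prove the main claim that $\pi\in\G^\vv_n$ iff $\vv\nleq\uu(\pi)$, where $\uu(\pi)\in\{+,-\}^{n-1}$ is the word defined by $u_i(\pi):=+$ if $i\in\Des(\pi^{-1})$ and $u_i(\pi):=-$ otherwise. The second equality is immediate, because descent classes partition $\S_n=\bigsqcup_{\uu\in\{+,-\}^{n-1}}D_{n,J_\uu}^{-1}$, so the complement of $\bigsqcup_{\vv\le\uu}D_{n,J_\uu}^{-1}$ inside $\S_n$ is precisely $\bigsqcup_{\vv\nleq\uu}D_{n,J_\uu}^{-1}$.

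First I would unpack $\pi\in\G^\vv_n$ geometrically. A drawing of $\pi$ on the one-column grid of $\vv$ partitions $[n]$ into consecutive value-intervals $I_\ell=(a_{\ell-1},a_\ell]$ with $0=a_0\le a_1\le\cdots\le a_k=n$, where $I_\ell$ holds the labels of the dots in cell $\ell$ (consecutive because cells are stacked vertically and labels are assigned by increasing $y$-coordinate). Within cell $\ell$, lying on a slope-$v_\ell$ segment forces $\pi^{-1}|_{I_\ell}$ to be increasing if $v_\ell=+$ and decreasing if $v_\ell=-$. Translating to $\uu$: for each $j$ with $a_{\ell-1}<j<a_\ell$, the pair $(\pi^{-1}(j),\pi^{-1}(j+1))$ is monotone in the direction dictated by $v_\ell$, forcing $u_j=-v_\ell$; the $k-1$ boundary indices $j=a_\ell$ ($1\le\ell\le k-1$) are unconstrained. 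Hence $\pi\in\G^\vv_n$ iff such a partition $(a_\ell)$ compatible with $\uu(\pi)$ exists.

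Next I would show that the existence of such a partition is equivalent to $\vv\nleq\uu(\pi)$ via greedy matching. Define recursively $q_\ell:=\min\{j>q_{\ell-1}:u_j=v_\ell\}$ (with $q_0:=0$), continuing as long as the set is nonempty; then $\vv\le\uu$ precisely when $q_k\in[n-1]$ exists. If a partition exists, induction on $\ell$ shows $q_\ell\ge a_\ell$ whenever $q_\ell$ is defined, because any index strictly inside $I_\ell$ has $u_j=-v_\ell\ne v_\ell$ and so cannot be selected. Iterating, $q_k\ge a_k=n$, contradicting $q_k\le n-1$, so the greedy matching fails and $\vv\nleq\uu$. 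Conversely, if $\vv\nleq\uu$, greedy stops at some step $\ell^*\le k$; setting $a_\ell:=q_\ell$ for $\ell<\ell^*$ and $a_\ell:=n$ for $\ell\ge\ell^*$ gives a valid partition, since the minimality of each $q_\ell$ makes every index in the open interval $(q_{\ell-1},q_\ell)$ satisfy $u_j\ne v_\ell$, i.e., $u_j=-v_\ell$, and the failure of greedy at step $\ell^*$ supplies the same property on $(q_{\ell^*-1},n)$; the remaining intervals $I_\ell$ for $\ell>\ell^*$ are empty and impose no constraint.

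Combining this characterization with Lemma~\ref{lem:DJ} and Observation~\ref{union}, $\G^\vv_n$ is a disjoint union of inverse descent classes (each fine), so it is fine. The delicate point is the sign flip between the cell slopes $v_\ell$ and the forced interior values $-v_\ell$ in $\uu$; the greedy matching navigates this cleanly because it searches $\uu$ for occurrences of $v_\ell$ rather than $-v_\ell$, and those occurrences are exactly what the interior structure of the partition rules out except at the $k-1$ boundary indices.
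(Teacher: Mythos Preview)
Your proof is correct and follows essentially the same route as the paper's: both reduce membership in $\G^\vv_n$ to the condition that the descent word of $\pi^{-1}$ (your $\uu(\pi)$) avoids $\vv$ as a subsequence, and both establish this via a greedy argument. The paper first groups the cells of $\vv$ into maximal constant blocks $+^r$ or $-^r$, invokes Observation~\ref{obs:Gk} to say the corresponding piece of the descent word has at most $r-1$ $d$'s (resp.\ $a$'s), and then argues that such concatenations are exactly the words avoiding the forbidden subsequence; your version bypasses the block grouping and Observation~\ref{obs:Gk} by working cell by cell and running the greedy match explicitly, which is a slightly more streamlined packaging of the same idea.
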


\begin{ex} We have $$\G^{-++}_5=\S_5\setminus\left(D_{5,\{2,3\}}^{-1} \sqcup D_{5,\{2,4\}}^{-1}\sqcup D_{5,\{3,4\}}^{-1} \sqcup D_{5,\{1,3,4\}}^{-1} \sqcup D_{5,\{2,3,4\}}^{-1}\right).$$
\end{ex}

\begin{proof}[Proof of Proposition~\ref{one-column_descent_classes}]
It will be convenient to record the descent set of a permutation in $\S_n$ as a word in $\{a,d\}^{n-1}$ which has $d$s exactly at the positions of the descents.
Let $W(\vv)$ be the set of words in $\{a,d\}^{n-1}$ that can be obtained from $\vv$ by replacing each maximal block of the form $+^r$ with a word with at most $r-1$ $d$s,
and each maximal block of the form $-^r$ with a word with at most $r-1$ $a$s. For example, $W(--+++-)$ is the set of words that can be obtained by concatenating a word with at most one $a$,
a word with at most two $d$s, and a word with no $a$s.

It follows easily from the definitions and Observation~\ref{obs:Gk} that $\pi\in\G^\vv_n$ if
and only if the descent word of $\pi^{-1}$ belongs to $W(\vv)$. On
the other hand, words in $W(\vv)$ are precisely those that avoid
the subsequence obtained from $\vv$ by replacing each $+$ with a
$d$ and each $-$ with an $a$. Indeed, this subsequence and any word containing it
cannot be decomposed as a concatenation of subwords as described above. Conversely, any word that avoids this subsequence can be broken up into subwords as follows: if $\vv$ starts with $+^r-^s\cdots$ (the other case if analogous), let the first break up point occur right before the $r$th occurrence of $d$ in the word (or at the end of the word if there are not enough occurrences), the second break up point right before the $s$th occurrence of $a$ after that, and so on.

It follows that $\pi\notin\G^\vv_n$ if and
only the descent word of $\pi^{-1}$ contains this subsequence.
This latter condition is equivalent to the fact that $\pi\in D_{n,J_\uu}^{-1}$ for
some $\uu$ that contains $\vv$ as a subsequence.
\end{proof}

Applying Lemma~\ref{lem:DJ} gives the following corollary.

\begin{corollary}\label{one-column_zigzags}
For every $\vv\in \{+,-\}^k$,
$$\Q(\G^\vv_n)=\sum_{\uu\in \{+,-\}^{n-1}\atop \vv\nleq \uu}s_{Z_{n,J_u}}.$$
\end{corollary}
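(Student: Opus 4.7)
The plan is very short: combine Proposition~\ref{one-column_descent_classes} with Lemma~\ref{lem:DJ} using the additivity of $\Q$ over disjoint (multi)set unions. Concretely, from the definition~\eqref{eq:QB} of $\Q(\cdot)$ as a sum of fundamental quasisymmetric functions weighted by multiplicities, it is immediate that if $\BBB = \BBB_1 \sqcup \BBB_2$ as a disjoint union of sets, then $\Q(\BBB) = \Q(\BBB_1) + \Q(\BBB_2)$. So the first step is just to record this additivity and apply it to the decomposition
\[
\G^\vv_n = \bigsqcup_{\uu\in\{+,-\}^{n-1},\,\vv\nleq \uu} D_{n,J_\uu}^{-1}
\]
furnished by Proposition~\ref{one-column_descent_classes}.

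The second step is to substitute $\Q(D_{n,J_\uu}^{-1}) = s_{Z_{n,J_\uu}}$ from Lemma~\ref{lem:DJ} into each summand. This yields
\[
\Q(\G^\vv_n) \;=\; \sum_{\uu\in\{+,-\}^{n-1},\,\vv\nleq \uu} \Q(D_{n,J_\uu}^{-1}) \;=\; \sum_{\uu\in\{+,-\}^{n-1},\,\vv\nleq \uu} s_{Z_{n,J_\uu}},
\]
which is the identity to be proved.

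There is really no obstacle here: the work has already been done in Proposition~\ref{one-column_descent_classes}, whose proof handles the combinatorial decomposition of $\G^\vv_n$ into inverse descent classes indexed by words $\uu$ not containing $\vv$ as a subsequence, and in Lemma~\ref{lem:DJ}, which identifies the quasisymmetric function of each inverse descent class with the ribbon Schur function $s_{Z_{n,J}}$. The corollary is purely the formal consequence of stringing those two statements together, so the proof can be written in one or two sentences.
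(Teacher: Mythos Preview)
Your proof is correct and follows exactly the same approach as the paper: apply Proposition~\ref{one-column_descent_classes} to write $\G^\vv_n$ as a disjoint union of inverse descent classes, then use Lemma~\ref{lem:DJ} on each piece. The paper states this corollary with the single remark ``Applying Lemma~\ref{lem:DJ} gives the following corollary,'' which is precisely what you have spelled out.
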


For example, $\Q(\G^{-+})$ equals the multiplicity-free sum of ribbons
$Z_{n,J}$ where $J$ is a prefix of $1,\dots,n-1$. These ribbons are
precisely the hooks, agreeing with Lemma~\ref{lem:QLL}.

\medskip

Recall from~\cite[Ch. 9.4]{Reutenauer}
{\em the Solomon descent subalgebra} of $\CC[\S_n]$, which is
spanned by the elements of the group algebra
\begin{equation}\label{eq:dnJ}
d_{n,J}:=\sum\limits_{\pi \in D_{n,J}}\pi.
\end{equation}
For $\vv\in \{+,-\}^k$ and
$n>k$, let
\[
g_{n,\vv}:=\sum\limits_{\pi\in \G^\vv_n}\pi^{-1}\,\in\CC[\S_n].
\]
Letting $k=n-1$, we get the following immediate consequence of
Proposition~\ref{one-column_descent_classes}.

\begin{corollary}\label{Solomon_basis}
The set
\[
\{g_{n,\vv}:\ \vv\in\{+,-\}^{n-1}\}
\]
forms a basis for the Solomon descent subalgebra in $\CC[\S_n]$.
\end{corollary}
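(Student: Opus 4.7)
The plan is to reduce the claim to the fact that $\{d_{n,J}: J \subseteq [n-1]\}$ is a basis for the Solomon descent subalgebra. The key step is to use Proposition~\ref{one-column_descent_classes}, specialized to $\vv \in \{+,-\}^{n-1}$, to express each $g_{n,\vv}$ explicitly in terms of this basis. At this length the subsequence relation trivializes, and the expression collapses to a simple closed form.

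Concretely, I would first observe that when $|\vv| = |\uu| = n-1$, the subsequence relation $\vv \le \uu$ forces $\vv = \uu$. Hence Proposition~\ref{one-column_descent_classes} specializes to
$$
\G^\vv_n = \bigsqcup_{\uu \in \{+,-\}^{n-1} \atop \uu \ne \vv} D_{n,J_\uu}^{-1}.
$$
Summing inverses yields
$$
g_{n,\vv} = \sum_{\uu \ne \vv} d_{n,J_\uu} = \Sigma - d_{n,J_\vv},
$$
where $\Sigma := \sum_{J \subseteq [n-1]} d_{n,J} = \sum_{\pi \in \S_n} \pi$ is the sum of all group elements. In particular each $g_{n,\vv}$ lies in the Solomon descent subalgebra.

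Since $\uu \mapsto J_\uu$ is a bijection from $\{+,-\}^{n-1}$ onto $2^{[n-1]}$, the set $\{g_{n,\vv} : \vv \in \{+,-\}^{n-1}\}$ has cardinality $2^{n-1}$, matching the dimension of the Solomon descent subalgebra, so it suffices to show spanning. Summing the identity $g_{n,\vv} = \Sigma - d_{n,J_\vv}$ over all $\vv$ gives $\sum_\vv g_{n,\vv} = (2^{n-1} - 1)\Sigma$, so $\Sigma$ lies in the span of the $g_{n,\vv}$ for $n \ge 2$; then each $d_{n,J_\vv} = \Sigma - g_{n,\vv}$ does as well, and since the $d_{n,J}$ form a basis we conclude that the $g_{n,\vv}$ span the whole subalgebra. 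By the dimension count they form a basis. The case $n=1$ is trivial. There is no substantive obstacle; the only conceptual point is the collapse of the subsequence relation at length $n-1$, which makes the change-of-basis matrix equal to $\mathbf{1} - I$, plainly invertible.
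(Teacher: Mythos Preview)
Your proof is correct and follows essentially the same approach as the paper: both specialize Proposition~\ref{one-column_descent_classes} to $|\vv|=n-1$ to obtain $\G^\vv_n = \S_n\setminus D_{n,J_\vv}^{-1}$, hence $g_{n,\vv}=\Sigma-d_{n,J_\vv}$, and then deduce the result from the fact that $\{d_{n,J}\}$ is a basis. The paper simply asserts that ``the result follows'' at this point, whereas you spell out the invertibility of the change-of-basis matrix $\mathbf{1}-I$ explicitly (including the degenerate case $n=1$).
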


\begin{proof}
Proposition~\ref{one-column_descent_classes}, for the special case of
$\vv\in\{+,-\}^{n-1}$, states that $\G_{n}^\vv=\S_n\setminus
D_{n,J_\vv}^{-1}$. Since the set $\{d_{n,J}:\ J\subseteq
[n-1]\}$ forms a basis for the Solomon descent subalgebra, the result follows.
\end{proof}

\subsection{Colayered permutations}

Another family of Schur-positive grid classes is the following.

\begin{defn}
The {\em $k$-colayered grid} class $\LLL^k$ is determined by the $k\times
k$ identity matrix:
$$\LLL^k = \G\left(\begin{array}{ccccc}
           1 & 0 & 0 & \cdots & 0\\
           0 & 1 & 0 & \cdots & 0\\
           0 & 0 & 1 & \cdots & 0\\
           \vdots & \vdots & \vdots & \ddots & \vdots\\
           0 & 0 & 0 & \cdots & 1
         \end{array}\right).$$
Let $\LLL=\bigcup_{k\ge 1}\LLL^k$ be the set of colayered permutations with an arbitrary number of layers. Let $\LLL^k_n=\LLL^k\cap\S_n$ and
$\LLL_n=\LLL\cap\S_n$.
\end{defn}

Figure~\ref{fig:Y} shows the $2$- and $3$-colayered grids.

\begin{figure}[htb]
\centering
\begin{tikzpicture}[scale=.8]
\draw (0,0) rectangle (4,4); \draw[dotted] (0,2)--(4,2);
\draw[dotted] (2,0)--(2,4);
\draw[thick] (0,2)--(2,4); \draw[thick] (2,0)--(4,2);
\end{tikzpicture}
\hspace{10mm}
\begin{tikzpicture}[scale=.8]
\draw (0,0) rectangle (4,4); \draw[dotted] (0,1.333)--(4,1.333);
\draw[dotted] (0,2.666)--(4,2.666); \draw[dotted]
(1.333,0)--(1.333,4); \draw[dotted] (2.666,0)--(2.666,4);
\draw[thick] (0,2.666)--(1.333,4); \draw[thick]
(1.333,1.333)--(2.666,2.666); \draw[thick] (2.666,0)--(4,1.333);
\end{tikzpicture}
\caption{The grids for $\LLL^2$ (left) and $\LLL^3$ (right).}
\label{fig:Y}
\end{figure}

\begin{ex} We have that $\LLL^1_5=\{12345\}$ and
$\LLL^2_5=\{12345,51234,45123,34512,23451\}$.
\end{ex}

\begin{proposition}\label{prop:k-colayered_hooks}
For every $1\le k\le n$,
\[
\Q(\LLL^k_n\setminus \LLL^{k-1}_n)=s_{n-k+1,1^{k-1}},
\]
with the convention that $\LLL^{0}_n=\emptyset$.
\end{proposition}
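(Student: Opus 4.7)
The plan is to exhibit an explicit description of $\LLL^k_n\setminus\LLL^{k-1}_n$, compute the descent set of each of its elements, and match the resulting distribution with that on $\SYT(n-k+1,1^{k-1})$, so that the statement follows from Proposition~\ref{G1}.

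First I would unpack the grid definition to see that every $\pi\in\LLL^k_n$ is determined by a composition $(s_1,\dots,s_k)$ of $n$ into $k$ nonnegative parts (the sizes of the layers from top to bottom). Since the slope-$1$ segments lie along the diagonal of the matrix, the dots in the top-left cell receive the largest labels and are read first, then the next cell down-and-right, and so on. Concretely,
\[
\pi=(n-s_1+1,\dots,n,\ n-s_1-s_2+1,\dots,n-s_1,\ \dots,\ 1,\dots,s_k).
\]
Such a $\pi$ lies in $\LLL^{k-1}_n$ exactly when some $s_j=0$, so the map sending $\pi$ to its composition restricts to a bijection between $\LLL^k_n\setminus\LLL^{k-1}_n$ and compositions of $n$ into exactly $k$ positive parts.

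Next I would compute the descent set. Within each block of length $s_j$ the values increase by $1$, and between consecutive blocks one checks that the last entry of block $j$, namely $n-s_1-\cdots-s_{j-1}$, exceeds the first entry of block $j+1$, namely $n-s_1-\cdots-s_{j+1}+1$, by $s_j+s_{j+1}-1\ge 1$. Hence
\[
\Des(\pi)=\{s_1,\ s_1+s_2,\ \dots,\ s_1+\cdots+s_{k-1}\},
\]
which is a $(k-1)$-subset of $[n-1]$. As $(s_1,\dots,s_k)$ ranges over compositions of $n$ into $k$ positive parts, this descent set ranges over every $(k-1)$-subset of $[n-1]$ exactly once.

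Finally I would compare with the hook $(n-k+1,1^{k-1})$. A standard Young tableau $T$ of this shape is determined by its first column $\{1<c_1<\cdots<c_{k-1}\}\subseteq[n]$. A brief case analysis (splitting according to whether $i$ lies in the first row or the first column) shows that $i\in\Des(T)$ iff $i+1$ lies in the first column, so $\Des(T)=\{c_1-1,\dots,c_{k-1}-1\}$, and this is again a bijection to the $(k-1)$-subsets of $[n-1]$. Composing the two bijections yields a descent-preserving bijection $\LLL^k_n\setminus\LLL^{k-1}_n\to\SYT(n-k+1,1^{k-1})$, and Proposition~\ref{G1} gives $\Q(\LLL^k_n\setminus\LLL^{k-1}_n)=s_{n-k+1,1^{k-1}}$. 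There is no real obstacle beyond carefully verifying the descent-set formulas on both sides; the hook shape on the right is essentially forced by the fact that the descent sets on the left are exactly the $(k-1)$-subsets of $[n-1]$, each appearing with multiplicity one.
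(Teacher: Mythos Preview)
Your proof is correct and follows essentially the same approach as the paper: both arguments show that $\pi\mapsto\Des(\pi)$ is a bijection from $\LLL^k_n\setminus\LLL^{k-1}_n$ to the $(k-1)$-subsets of $[n-1]$, match this with the descent sets of $\SYT(n-k+1,1^{k-1})$, and conclude via Proposition~\ref{G1}. Your version is more explicit in writing out the colayered permutations and verifying the descent computations on both sides, but the underlying strategy is identical.
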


\begin{proof}
A permutation $\pi\in \LLL^k$ does not belong to
$\LLL^{k-1}$ if and only if $\des(\pi)=k-1$.
Permutations $\pi\in \LLL^k$ with $\des(\pi)=k-1$ are in bijection with subsets of $[n-1]$ of cardinality $k-1$, via the map $\pi\mapsto\Des(\pi)$.
We conclude that
\[
\Q(\LLL^k_n\setminus \LLL^{k-1}_n)=\sum\limits_{\pi\in
\LLL^k_n\setminus \LLL^{k-1}_n}F_{n,\Des(\pi)}
=\sum\limits_{\substack{J\subseteq [n-1]\\ |J|=k-1}} F_{n,J}=
\sum\limits_{T\in
\SYT((n-k+1,1^{k-1}))}F_{n,\Des(T)}=s_{n-k+1,1^{k-1}}.
\qedhere\]
\end{proof}

\begin{corollary}\label{cor:LLLfine}
For every $1\le k\le n$, $\LLL^k_n$ is a fine set and
$$\Q(\LLL^k_n)=s_{n}+s_{n-1,1}+s_{n-2,1,1}+\dots+s_{n-k+1,1^{k-1}}.$$
In particular, $C_n$ is fine and $\Q(C_n)=s_{n}+s_{n-1,1}$.
\end{corollary}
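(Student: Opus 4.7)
The plan is to deduce both statements almost immediately from Proposition~\ref{prop:k-colayered_hooks} and Observation~\ref{union}. First I would observe the telescoping decomposition
\[
\LLL^k_n = \bigsqcup_{j=1}^{k} \left(\LLL^j_n\setminus \LLL^{j-1}_n\right),
\]
which is a \emph{disjoint} union since the sets $\LLL^1_n\subseteq \LLL^2_n\subseteq\cdots\subseteq \LLL^k_n$ are nested by definition of the grid classes. Proposition~\ref{prop:k-colayered_hooks} tells us that each piece $\LLL^j_n\setminus \LLL^{j-1}_n$ is fine, with $\Q(\LLL^j_n\setminus \LLL^{j-1}_n)=s_{n-j+1,1^{j-1}}$.

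Next I would invoke Observation~\ref{union}: unions of disjoint fine sets are fine, and $\Q$ is additive on disjoint unions via its definition in Equation~\eqref{eq:QB}. Summing gives
\[
\Q(\LLL^k_n)=\sum_{j=1}^{k}\Q(\LLL^j_n\setminus \LLL^{j-1}_n)=\sum_{j=1}^{k} s_{n-j+1,1^{j-1}}=s_n+s_{n-1,1}+\cdots+s_{n-k+1,1^{k-1}},
\]
establishing the first assertion.

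For the ``in particular'' statement, the key step is the identification $C_n=\LLL^2_n$. The powers of the long cycle $c=(1,2,\dots,n)$ are precisely the cyclic shifts $c^j=(j{+}1)(j{+}2)\cdots n\, 1\,2\cdots j$ for $0\le j<n$, each of which consists of two increasing runs of consecutive integers (the second run lying entirely below the first) and thus is exactly a permutation drawable on the $2$-colayered grid. Conversely, any permutation in $\LLL^2_n$ has this same shape, so the two sets coincide (in agreement with the example $\LLL^2_5=\{12345,51234,45123,34512,23451\}$). Applying the formula with $k=2$ then yields $\Q(C_n)=s_n+s_{n-1,1}$.

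There is no real obstacle here: the entire argument is a bookkeeping step on top of Proposition~\ref{prop:k-colayered_hooks}, whose proof in turn did all the substantive work. The only point requiring a brief verification is the identity $C_n=\LLL^2_n$, which is a direct consequence of writing out the cyclic shifts and comparing with the definition of the grid.
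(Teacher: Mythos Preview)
Your proof is correct and matches the paper's intended argument: the corollary is stated without proof precisely because it follows by summing the identities of Proposition~\ref{prop:k-colayered_hooks} over the telescoping decomposition $\LLL^k_n=\bigsqcup_{j=1}^k(\LLL^j_n\setminus\LLL^{j-1}_n)$, together with Observation~\ref{union}. The identification $C_n=\LLL^2_n$ is also exactly what the paper uses (cf.\ Section~\ref{sec:vertical}).
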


\begin{remark}
$\Q(\LLL_n)$ is the Frobenius image of the character of
the exterior algebra $\wedge V$, where $V$ is the $n$-dimensional
natural representation space of $\S_n$.
This is because the exterior algebra $\wedge V$ is isomorphic to the
multiplicity-free sum of all hooks \cite[Exer. 4.6]{Fulton_Harris_book}.
\end{remark}

\section{Products of fine sets}\label{sec:products}

Given two subsets $A,B\subseteq\S_n$, define their product $AB$ to be the
multiset of all permutations obtained as $\pi\sigma$ where $\pi\in A$ and $\sigma\in B$. To denote the
underlying set, without multiplicities, we write $\{AB\}$. We call
$AB$ the multiset product and $\{AB\}$ the set product of $A$ and
$B$.

We are interested in pairs of fine subsets $A,B\subseteq \S_n$
whose product is fine, either as a multiset or as a set.

\subsection{Basic examples}

In general, the product of fine subsets of $\S_n$ does not give a
fine multiset or set. For example, the subsets
$A=\{2134,3412,1243\}$ and $B=\{2143,3412\}$ in $\S_4$ are fine,
but $AB=\{AB\}$ is not fine (although $\Q(AB)$ is symmetric).

For positive integers $n$ and $k$ let $B_{n,k}$ be the set of
permutations in $\S_n$ whose Coxeter length is at most $k$. Notice
that $B_{n,k}$ is a union of sets each of which consists of all permutations of a fixed Coxeter length, and so it is set-fine by Proposition~\ref{fine_list}(i) together with
Observation~\ref{union}. Noticing that $\{B_{n,k}B_{n,r}\}=B_{n,k+r}$, we have the following.

\begin{observation}
For every positive integers $k$ and $r$, the set product
$\{B_{n,k}B_{n,r}\}$ is set-fine.
\end{observation}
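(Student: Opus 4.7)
The plan is to reduce the statement to the identity $\{B_{n,k}B_{n,r}\}=B_{n,k+r}$, already asserted in the lead-in to the observation, and then reuse the fact that $B_{n,k+r}$ is set-fine by the same argument used for $B_{n,k}$ itself.

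First I would verify the set identity carefully. The inclusion $\{B_{n,k}B_{n,r}\}\subseteq B_{n,k+r}$ is the subadditivity $\ell(\pi\sigma)\le \ell(\pi)+\ell(\sigma)$ of Coxeter length, applied to $\pi\in B_{n,k}$ and $\sigma\in B_{n,r}$. For the reverse inclusion, I would take any $\tau\in\S_n$ with $\ell(\tau)=m\le k+r$ and split a reduced expression $\tau=s_{i_1}s_{i_2}\cdots s_{i_m}$ into a prefix of length $\min(m,k)$ and a suffix of length $m-\min(m,k)\le r$; each factor then lies in $B_{n,k}$ or $B_{n,r}$ respectively (padding with identity factors if $m<k+r$), which shows $\tau\in\{B_{n,k}B_{n,r}\}$.

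Once the identity is established, I would observe that
\[
B_{n,k+r}=\bigsqcup_{j=0}^{k+r}\{\pi\in\S_n:\ell(\pi)=j\}
\]
is a disjoint union of sets of fixed Coxeter length (equivalently, fixed inversion number). By Proposition~\ref{fine_list}(i), each set in this union is set-fine, and by Observation~\ref{union} the disjoint union is also set-fine. This proves that $\{B_{n,k}B_{n,r}\}=B_{n,k+r}$ is set-fine.

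There is no real obstacle here; the only moving part is the reduced-word splitting argument, which is entirely routine for Coxeter groups. The content of the observation is that, unlike a generic pair of fine sets whose product need not be fine (as illustrated by the $A,B\subseteq\S_4$ example immediately preceding the observation), the two-sided closure property of Coxeter length under multiplication forces the set product of two fine sets of this particular form to remain fine.
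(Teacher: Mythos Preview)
Your proof is correct and follows the same approach as the paper: the paper simply notes that $\{B_{n,k}B_{n,r}\}=B_{n,k+r}$ and appeals to the already-established fact that $B_{n,k+r}$ is set-fine. You have filled in the details of the set identity via subadditivity and reduced-word splitting, which the paper leaves implicit.
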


Note, however, that the multiset product of subsets of
permutations of bounded Coxeter length is not necessarily fine.
For example, letting $A=\{\pi \in \S_4:\ \ell(\pi)\le 1\}$,
the multiset $AA$ is not fine (in fact, $\Q(A)$ is not even symmetric),
and the same holds if we let $A=\{\pi \in \S_5:\ \ell(\pi)\le 2\}$. The next
lemma shows that conjugacy classes behave better with respect to
products.

\begin{lemma}
Multiset and set products of conjugacy classes in $\S_n$ are fine.
\end{lemma}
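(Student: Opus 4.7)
The plan is to exploit conjugation invariance. Let $A$ and $B$ be two conjugacy classes in $\S_n$. For any $g\in\S_n$, the map $(\pi,\sigma)\mapsto(g\pi g^{-1},g\sigma g^{-1})$ is a bijection from $A\times B$ to itself (since conjugation preserves cycle type) that sends a pair with product $\tau$ to one with product $g\tau g^{-1}$. I would use this to conclude that $\tau$ and $g\tau g^{-1}$ occur with the same multiplicity in the multiset $AB$, so that $AB$ decomposes as $\bigsqcup_{\lambda\vdash n}a_\lambda\cdot K_\lambda$, where $K_\lambda$ denotes the conjugacy class of cycle type $\lambda$ and $a_\lambda\in\bbn$ is the common multiplicity on $K_\lambda$.

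Next, since each $K_\lambda$ is closed under conjugation, Proposition~\ref{fine_list}(ii) guarantees that $K_\lambda$ is fine, i.e., $\Q(K_\lambda)$ is symmetric and Schur-positive. By additivity of $\Q$ over multiset union (see equation~\eqref{eq:QB}), I obtain $\Q(AB)=\sum_{\lambda\vdash n}a_\lambda\,\Q(K_\lambda)$, a nonnegative integer combination of Schur-positive symmetric functions, hence itself Schur-positive; this handles the multiset product. For the set product $\{AB\}=\bigcup_{a_\lambda>0}K_\lambda$, the same closure under conjugation applies, so fineness follows from another direct application of Proposition~\ref{fine_list}(ii).

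Products of three or more conjugacy classes are then immediate by induction on the number of factors, since by the same bijection the multiset product of any conjugation-invariant multiset with a conjugacy class is again conjugation-invariant, and the argument above applies. I do not anticipate a real obstacle: the substantive content is essentially the well-known algebraic fact that class sums span the center of $\CC[\S_n]$, which translates directly into the desired Schur-positivity of (multi)set products of conjugacy classes.
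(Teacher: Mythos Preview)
Your proposal is correct and follows essentially the same approach as the paper: both arguments observe that multiset products of conjugacy classes are conjugation-invariant (hence decompose as multiset unions of conjugacy classes) and then invoke Proposition~\ref{fine_list}(ii) together with Observation~\ref{union}, with the set case following directly from closure under conjugation. You have simply spelled out the conjugation-invariance step more explicitly via the bijection $(\pi,\sigma)\mapsto(g\pi g^{-1},g\sigma g^{-1})$, which the paper leaves implicit.
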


\begin{proof}
Set products of conjugacy classes are closed under conjugation,
thus, by Proposition~\ref{fine_list}(ii), they are set-fine.
Similarly, multiset products of conjugacy classes are multiset
unions of conjugacy classes, thus by
Proposition~\ref{fine_list}(ii) together with
Observation~\ref{union} they are multiset-fine.
\end{proof}

Conjugacy classes span the center of the group algebra
$\CC[\S_n]$. Inverse descent classes span the Solomon descent
subalgebra, from where the next result follows.

\begin{proposition}\label{conj_des_classes}
Multiset and set products of inverse descent classes in $\S_n$ are fine.
\end{proposition}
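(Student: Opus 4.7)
The plan is to reduce the claim to Lemma~\ref{lem:DJ} using Solomon's theorem on the multiplicative closure of the descent subalgebra, combined with the group anti-involution $\pi\mapsto\pi^{-1}$.

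First I would note that, for any $J,K\subseteq[n-1]$, the coefficient of a permutation $\pi\in\S_n$ in the product $d_{n,K}\,d_{n,J}\in\CC[\S_n]$ (with $d_{n,J}$ as in~\eqref{eq:dnJ}) is exactly the multiplicity of $\pi$ in the multiset product $D_{n,K}D_{n,J}$. Solomon's theorem asserts that $d_{n,K}\,d_{n,J}$ still lies in the descent subalgebra, so there exist integers $c_L\ge 0$ (nonnegative because they record multiplicities in a multiset product) with
\[
d_{n,K}\,d_{n,J} \;=\; \sum_{L\subseteq[n-1]} c_L\,d_{n,L}.
\]
Since the descent classes $\{D_{n,L}\}$ partition $\S_n$, this is equivalent to the multiset identity
\[
D_{n,K}D_{n,J} \;=\; \bigsqcup_{L\subseteq[n-1]} c_L\cdot D_{n,L}.
\]

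Next I would apply the element-wise inversion $\pi\mapsto\pi^{-1}$. Because $(\sigma\tau)^{-1}=\tau^{-1}\sigma^{-1}$, this bijection carries the multiset $D_{n,K}D_{n,J}$ onto $D_{n,J}^{-1}D_{n,K}^{-1}$ and each $D_{n,L}$ onto $D_{n,L}^{-1}$, whence
\[
D_{n,J}^{-1}D_{n,K}^{-1} \;=\; \bigsqcup_{L\subseteq[n-1]} c_L\cdot D_{n,L}^{-1}.
\]

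By Lemma~\ref{lem:DJ}, each $D_{n,L}^{-1}$ is fine, and by Observation~\ref{union} disjoint unions of fine multisets are again fine; so the multiset product $D_{n,J}^{-1}D_{n,K}^{-1}$ is fine. The underlying set product is $\{D_{n,J}^{-1}D_{n,K}^{-1}\}=\bigsqcup_{L:\,c_L>0}D_{n,L}^{-1}$, also a disjoint union of fine sets, hence fine. Products of three or more inverse descent classes are handled identically by induction, since Solomon's subalgebra is closed under arbitrary products: any $d_{n,J_1}\cdots d_{n,J_r}$ again expands as a nonnegative-integer combination of the $d_{n,L}$, and the inversion-and-union argument applies verbatim.

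The main obstacle is really already packaged into Solomon's theorem itself: one needs both closure of the descent subalgebra under multiplication and the nonnegativity of the structure constants $c_L$ in the basis $\{d_{n,J}\}$, the latter being automatic from the combinatorial interpretation of $c_L$ as multiset multiplicities in $D_{n,K}D_{n,J}$. Once these are granted, the remainder of the proof is a routine transport of an algebra identity to a multiset identity, reflected through $\pi\mapsto\pi^{-1}$.
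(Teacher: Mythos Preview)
Your proof is correct and follows essentially the same route as the paper: both arguments invoke the multiplicative closure of the Solomon descent subalgebra (the paper cites Gessel's Corollary~15, you cite Solomon's theorem) to express the product as a nonnegative-integer combination of descent-class sums, transport this through $\pi\mapsto\pi^{-1}$ to inverse descent classes, and then conclude via Lemma~\ref{lem:DJ} and Observation~\ref{union}. The only cosmetic difference is that you make the nonnegativity of the structure constants explicit from the multiset interpretation and spell out the extension to more than two factors, whereas the paper leaves both implicit.
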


\begin{proof}
For every triple $I,J,K$ of subsets of $[n-1]$,
the multiplicity of $D_{n,K}^{-1}$ in the multiset product
$D_{n,I}^{-1} D_{n,J}^{-1}$ equals the the multiplicity of
$D_{n,K}$ in the multiset product $D_{n,J} D_{n,I}$, which
is a nonnegative integer (see 
\cite[Cor. 15]{Gessel}). Thus, both $D_{n,I}^{-1} D_{n,J}^{-1}$
and $\{D_{n,I}^{-1} D_{n,J}^{-1}\}$ are unions of inverse descent
classes (possibly with multiplicities). The result now follows
from Lemma~\ref{lem:DJ}.
\end{proof}

The main theorem in this section is a significant strengthening of
Proposition~\ref{conj_des_classes} in the multiset case.

\begin{theorem}\label{main_FD}
For every fine set $\BBB$ of permutations in $\S_n$ and every
$J\subseteq[n-1]$,
\[
\BBB D_{n,J}^{-1}
\]
is  a fine multiset.
\end{theorem}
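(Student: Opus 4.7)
The plan is to prove
\[
\Q(\BBB D_{n,J}^{-1}) = \Q(\BBB) * s_{Z_{n,J}},
\]
where $*$ denotes the internal (Kronecker) product of symmetric functions. This suffices: the internal product of Schur-positive symmetric functions is Schur-positive, being the Frobenius image of the tensor product of two $\S_n$-representations. Concretely, $\BBB D_{n,J}^{-1}$ will be shown to be fine for $V \otimes R^J$, where $V$ is the $\S_n$-representation associated with $\BBB$ via Theorem~\ref{m1}(iv), and $R^J$ is Solomon's descent representation, whose character has Frobenius image $s_{Z_{n,J}}$.

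The first main step is a reduction lemma: for every $\pi \in \S_n$, the quasisymmetric function $\Q(\pi D_{n,J}^{-1})$ depends only on $\Des(\pi)$ (and $J$). Write $J = \{j_1 < \cdots < j_k\}$, let $B_s := [j_{s-1}+1, j_s]$ (with $j_0 = 0$ and $j_{k+1} = n$) be the induced value-blocks of $[n]$, and define $\text{Sh}(J) := \{\sigma \in \S_n : \text{the values of each } B_s \text{ appear in increasing position order in } \sigma\}$. A direct check shows $\text{Sh}(J) = \bigsqcup_{I \subseteq J} D_{n,I}^{-1}$, and the map $\sigma \mapsto \pi\sigma$ restricts to a bijection from $\text{Sh}(J)$ onto the set of all shuffles of the subsequences $\pi|_{B_1}, \ldots, \pi|_{B_{k+1}}$, whose underlying value sets are pairwise disjoint. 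Stanley's shuffling theorem (Proposition~\ref{Stanley_lemma}) and its multi-sequence extension (Remark~\ref{m_shuffles}) then imply that the $\Des$-distribution over $\pi \cdot \text{Sh}(J)$ depends only on $\{\Des(\pi|_{B_s})\}_s$, hence only on $\Des(\pi)$. M\"obius inversion on the Boolean lattice of subsets of $J$ gives
\[
\Q(\pi D_{n,J}^{-1}) = \sum_{I \subseteq J} (-1)^{|J \setminus I|}\, \Q(\pi \cdot \text{Sh}(I)),
\]
every term of which depends only on $\Des(\pi)$. Summing over $\pi \in \BBB$, we conclude that $\Q(\BBB D_{n,J}^{-1})$ depends only on the descent distribution of $\BBB$; hence, by Theorem~\ref{m1}(iii), we may assume $\BBB$ is a single Knuth class $\mathcal{C}_T$ of some shape $\lambda$.

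For the Knuth-class case, the target identity $\Q(\mathcal{C}_T D_{n,J}^{-1}) = s_\lambda * s_{Z_{n,J}}$ is equivalent, via Theorem~\ref{m2}, to the equality of class functions $\chi^{\mathcal{C}_T D_{n,J}^{-1}}(\mu) = \chi^\lambda(\mu) \cdot \chi^{Z_{n,J}}(\mu)$ at every cycle type $\mu \vdash n$. I would prove this by expanding both sides as sign-weighted sums over $\mu$-modal elements and matching them, once more using the shuffle decomposition and Stanley's theorem to compare the contribution of each pair $(\pi, \sigma) \in \mathcal{C}_T \times D_{n,J}^{-1}$ with the product of the individual contributions from $\pi$ and $\sigma$. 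This final matching is the main obstacle; it is where Solomon's descent representation $R^J$ is essential, providing the representation-theoretic framework to realize $\chi^{Z_{n,J}}$ as a genuine $\S_n$-character and to align the descent statistics of the products $\pi\sigma$ with that structure.
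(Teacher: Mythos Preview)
Your reduction step is essentially the same as the paper's: both use Stanley's shuffling theorem to show that $\Q(\pi R_{n,I}^{-1})$ (the shuffles of blocks of $\pi$) depends only on $\Des(\pi)$, and both invoke M\"obius inversion over subsets $I\subseteq J$ to pass from $R_{n,I}^{-1}$ to $D_{n,J}^{-1}$. Up to this point your outline is correct and matches the paper.

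The gap is in your final step. You propose to prove $\chi^{\mathcal{C}_T D_{n,J}^{-1}}(\mu)=\chi^\lambda(\mu)\,\chi^{Z_{n,J}}(\mu)$ by ``expanding both sides as sign-weighted sums over $\mu$-modal elements and matching them,'' but you explicitly flag this as ``the main obstacle'' and give no mechanism for the match. A direct combinatorial comparison of $\mu$-modal counts on $\mathcal{C}_T D_{n,J}^{-1}$ with the product of those on $\mathcal{C}_T$ and on $D_{n,J}^{-1}$ is not at all straightforward; your gesture toward Solomon's descent representation does not supply one. The paper sidesteps this entirely by working one level up, at the $R$ stage rather than the $D$ stage: Lemma~\ref{lemma_shuffles} identifies $\Q(\BBB R_{n,I}^{-1})$ with $\ch\bigl((\rho\downarrow_{\S_{\bar I}})\uparrow^{\S_n}\bigr)$ directly (shuffles of fine sets give induced representations). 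Then the reciprocity identity $(\rho\downarrow_{\S_{\bar I}})\uparrow^{\S_n}\cong 1_{\S_{\bar I}}\!\uparrow^{\S_n}\otimes\rho$ lets one factor $\rho$ out of the alternating sum, and the remaining sum $\sum_{I\subseteq J}(-1)^{|J\setminus I|}1_{\S_{\bar I}}\!\uparrow^{\S_n}$ is recognized as $S^{Z_{n,J}}$. This representation-theoretic factorization is precisely the missing ingredient in your argument; once you have it, no character-value matching is needed.
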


In the rest of this section we prove this result, which is part of
Theorem~\ref{main} below. The proof relies on the fact that right
multiplication of a permutation $\pi\in \S_n$
by a union of inverse descent classes $\bigsqcup_{I\subseteq J} D_{n,I}^{-1}$
amounts to shuffling
contiguous segments of appropriate lengths in the sequence
$\pi(1),\dots,\pi(n)$. On the other hand, shuffling
fine sets on disjoint sets of letters may be interpreted as an
algebraic operation on the corresponding
representations, as described in Lemma~\ref{lemma_shuffles} below.
This will be used to show that if $\BBB$ is a fine multiset of
permutations in $\S_n$, then the product of $\BBB$ by the above union of inverse descent classes
is multiset-fine. Finally we deduce that $\BBB D_{n,J}$ is multiset-fine by using
the inclusion-exclusion principle and applying representation-theoretic arguments
to keep track of Schur-positivity.

\subsection{Shuffles of fine sets}\label{subsec:shuffle_fine}

Recall from Subsection~\ref{prel_sec:shuffles} the notation
$A\shuffle B$ for the shuffle of two sets (or multisets) of
permutations, and $(\mu,\nu)$ for the skew Young diagram
consisting of two components of shapes $\mu$ and~$\nu$.

\begin{remark}
When we refer to the group $\S_k\times \S_{n-k}$ in this section,
we consider its natural embedding in $\S_n$, where $\S_k$ permutes
the letters $1,\dots, k$ and $\S_{n-k}$ permutes the letters
$k+1,\dots, n$. Similarly, $\S_{\eta_1}\times \cdots \times
\S_{\eta_m}$ is embedded in $\S_n$, where $\S_{\eta_1}$ permutes
the first $\eta_1$ letters, $\S_{\eta_2}$ permutes the next
$\eta_2$ letters, etc.
\end{remark}

\begin{lemma}\label{lemma_shuffles}
\begin{enumerate}
\item Fix a set partition $U\sqcup V=[n]$ with $|U|=k$. Let
$A$ and $B$ be fine (multi)sets of the symmetric groups on $U$ and
$V$, respectively. Then $A\shuffle B$ is a fine (multi)set of
$\S_n$, and
\begin{equation}\label{eq_shuffles}
\Q(A\shuffle B)=\Q(A)\Q(B).
\end{equation}
In other words, if $A$ is a fine (multi)set for the
$\S_k$-representation $\phi$ and $B$ is a fine (multi)set for the
$\S_{n-k}$-representation $\psi$, then $A\shuffle B$ is a fine
(multi)set for the induced representation $(\phi\otimes
\psi)\uparrow_{\S_k\times \S_{n-k}}^{\S_n}$.
\item More
generally, let $\eta=(\eta_1,\dots,\eta_m)$ be a composition of
$n$ and let $\bigsqcup_{i=1}^m U_i=[n]$ be a set partition with
$|U_i|=\eta_i$. If for all $1\le i\le m$ the set $A_i$ is fine in
the symmetric group on $U_i$ for the $\S_{\eta_i}$-representation
$\phi_i$, then $\shuffle_{i=1}^m A_i$ is a fine set for the
induced representation $(\bigotimes_{i=1}^m
\phi_i)\uparrow_{\S_{\eta_1}\times \cdots \times \S_{\eta_m}}^{\S_n}$.
\end{enumerate}
\end{lemma}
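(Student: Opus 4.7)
The plan is to decompose each of $A$ and $B$ into pieces that are in $\Des$-preserving bijection with Knuth classes, thereby reducing the claim to Theorem~\ref{BV}(2) combined with Stanley's shuffle principle (Proposition~\ref{Stanley_lemma}). By Theorem~\ref{m1}(iii) together with Remark~\ref{clarification}, we may write
\[
A = \bigsqcup_i A_i, \qquad B = \bigsqcup_j B_j,
\]
where each $A_i$ is in $\Des$-preserving bijection with a Knuth class $\mathcal{C}_i$ of some shape $\mu^{(i)} \vdash k$ on the letters $U$, and each $B_j$ is in $\Des$-preserving bijection with a Knuth class $\mathcal{D}_j$ of some shape $\nu^{(j)} \vdash n-k$ on the letters $V$. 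As multisets, this gives the decomposition $A \shuffle B = \bigsqcup_{i,j}(A_i \shuffle B_j)$.

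The core step is to compute $\Q(A_i \shuffle B_j)$. Applying Proposition~\ref{Stanley_lemma} pointwise to each matched pair $(\sigma,\tau) \in A_i \times B_j$ and its image in $\mathcal{C}_i \times \mathcal{D}_j$ shows that $A_i \shuffle B_j$ and $\mathcal{C}_i \shuffle \mathcal{D}_j$ have identical descent distributions. Since Knuth equivalence and descent sets depend only on the relative order of letters, an order-preserving relabeling $U \to [k]$ and $V \to \{k+1,\dots,n\}$ reduces this to the setting of Theorem~\ref{BV}(2), which identifies the descent distribution on $\mathcal{C}_i \shuffle \mathcal{D}_j$ with that on $\SYT((\mu^{(i)}, \nu^{(j)}))$. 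Combining this with Proposition~\ref{G1} and equation~\eqref{eq:smunu} yields
\[
\Q(A_i \shuffle B_j) \;=\; \Q(\SYT((\mu^{(i)}, \nu^{(j)}))) \;=\; s_{(\mu^{(i)}, \nu^{(j)})} \;=\; s_{\mu^{(i)}}\, s_{\nu^{(j)}}.
\]

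Summing over $i$ and $j$ gives $\Q(A \shuffle B) = \bigl(\sum_i s_{\mu^{(i)}}\bigr)\bigl(\sum_j s_{\nu^{(j)}}\bigr) = \Q(A)\,\Q(B)$, which is symmetric and Schur-positive as a product of symmetric Schur-positive functions. Hence $A \shuffle B$ is fine and formula~\eqref{eq_shuffles} holds. The representation-theoretic reformulation then follows immediately from Lemma~\ref{eq_skew_induced} and the fact that $\ch$ is a ring isomorphism. Part 2 follows from part 1 by induction on $m$, grouping $\shuffle_{i=1}^m A_i$ as $\bigl(\shuffle_{i=1}^{m-1} A_i\bigr) \shuffle A_m$ and using associativity of the shuffle together with the transitivity of induced representations. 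The only delicate point is passing from arbitrary disjoint letter sets $U, V$ to the specific intervals required by Theorem~\ref{BV}(2), which is handled cleanly by the combination of Stanley's shuffle principle and an order-preserving relabeling.
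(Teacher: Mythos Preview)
Your proposal is correct and follows essentially the same approach as the paper's proof: decompose $A$ and $B$ into pieces matching Knuth classes via Theorem~\ref{m1}(iii) and Remark~\ref{clarification}, invoke Stanley's shuffle principle (Proposition~\ref{Stanley_lemma}) to handle the passage between arbitrary letter sets and the interval setup, and then apply Theorem~\ref{BV}(2) together with Proposition~\ref{G1} and equation~\eqref{eq:smunu} to identify each piece with the product $s_{\mu^{(i)}}s_{\nu^{(j)}}$. The only cosmetic difference is the order of operations: the paper invokes Proposition~\ref{Stanley_lemma} once at the outset to assume $U=[k]$ and $V=\{k+1,\dots,n\}$ before decomposing, whereas you decompose first and then apply Stanley's principle pointwise to pass from $A_i\shuffle B_j$ to $\mathcal{C}_i\shuffle\mathcal{D}_j$; and for part~2 the paper appeals directly to the multi-factor generalizations (Remark~\ref{m_shuffles}) rather than inducting on $m$, but both routes are equivalent.
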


\begin{proof}
To prove part 1, it suffices to prove
Equation (\ref{eq_shuffles}). By Proposition~\ref{Stanley_lemma},
we may assume that $U=\{1,\dots,k\}$ and $V=\{k+1,\dots,n\}$.

By Theorem~\ref{m1} together with 
Remark~\ref{clarification}, $A$ is (multi)set-fine if and only if
there exist a (multi)set partition $A = A_1 \sqcup \ldots \sqcup
A_r$ and $\Des$-preserving bijections from each $A_i$ to a Knuth
class of a suitable shape $\mu^{(i)} \vdash n$, for $1 \le i \le
r$. The same holds for $B$, with (multi)set partition $B = B_1 \sqcup
\ldots \sqcup B_s$ and $\Des$-preserving bijections from each
$B_j$ to a Knuth class of a shape $\nu^{(j)} \vdash n$, for each
$1 \le j \le s$. Then, by Proposition~\ref{G1},
\[
\Q(A)=\sum\limits_i s_{\mu^{(i)}}, \qquad \Q(B)=\sum\limits_j s_{\nu^{(j)}},
\]
and so
\[
\Q(A)\Q(B)=\sum\limits_{i,j} s_{\mu^{(i)}} s_{\nu^{(j)}}.
\]
But by Equation~\eqref{eq:smunu} and Proposition~\ref{G1}, we have
\begin{equation}\label{eq:ij}
s_{\mu^{(i)}} s_{\nu^{(j)}} =
s_{(\mu^{(i)},\nu^{(j)})}=\Q(\SYT((\mu^{(i)},\nu^{(j)})))=\sum\limits_{T\in
\SYT((\mu^{(i)},\nu^{(j)}))} F_{n,\Des(T)}.
\end{equation}
Finally, by Theorem~\ref{BV}, the RHS of
Equation~\eqref{eq:ij} is equal to $\Q(A_i \shuffle B_j)$, where
$A_i$ and $B_j$ are the factors of the above-mentioned (multi)set
partitions of $A$ and $B$. We conclude that
\[
\Q(A)\Q(B)=\sum\limits_{i,j}
\Q(A_i \shuffle B_j) = Q (A\shuffle B). 
\]

 By Remark~\ref{m_shuffles}, the proof of part 1 may be generalized to shuffles of any number of
fine sets on distinct sets of letters, proving part 2.
\end{proof}

\subsection{Fine sets of Young subgroups}

For a composition $\eta=(\eta_1,\dots,\eta_m)$ of $n$, denote by
$\S_\eta\subseteq\S_n$ the corresponding {\em Young subgroup}
$\S_\eta:=\S_{\eta_1}\times \cdots \times \S_{\eta_m}$. Next we
consider the restriction of $\S_n$-representations on fine sets to
Young subgroups.

For compositions $\mu$ of $k$ and $\nu$ of $n-k$ denote by
$\mu,\nu$ their concatenation, which is a composition of $n$. Note
that for every $\pi\in \S_n$, $\Des(\pi) \setminus
S(\mu,\nu)\subseteq [k-1]\cup [k+1,n-1]$.

\begin{defn}\label{Young_character}
A (multi)set of permutations $\BBB$ in $\S_n$ is $\S_k\times
\S_{n-k}$-fine for its complex representation $\rho$ if for each
pair of compositions $\mu$ of $k$ and $\nu$ of $n-k$, the
character value of $\rho$ at a conjugacy class of cycle type
$\mu,\nu$ satisfies
\[
\chi^\rho(\mu,\nu) = \sum\limits_{\pi \in \BBB_{\mu,\nu}}
m(\pi,\BBB)(-1)^{|\Des(\pi) \setminus S(\mu,\nu)|},
\]
where $\BBB_{\mu,\nu}$ is the set of $\mu,\nu$-modal permutations
in $\BBB$ and $m(\pi,\BBB)$ is the multiplicity of $\pi\in \BBB$.
\end{defn}

\begin{remark}
Note that an $\S_k \times \S_{n - k}$-fine set may actually not
be a subset of $\S_k \times \S_{n - k}$. For example, the
singleton $\{3124\}$ which consists of one permutation in $\S_4$
is fine for the $\S_2\times \S_2$-irreducible representation
$S^{(1^2)}\otimes S^{(2)}$.
\end{remark}

\begin{proposition}\label{Young_equidistribution}
A (multi)set of permutations $\BBB$ in $\S_n$ is $\S_k\times
\S_{n-k}$-fine if and only if there exist a (multi)set partition
$\BBB = \BBB_1 \sqcup \ldots \sqcup \,\BBB_m$ and, for each $1 \le
i \le m$, a $\Des$-preserving bijection from $\BBB_i$ to the set of
all pairs of standard Young tableaux $(T^1,T^2)$ of suitable
shapes $\lambda^{(i)}\vdash k$ and $\mu^{(i)}\vdash n-k$,
respectively, where $T^1$ consists of the letters $1,\dots, k$,
$T^2$ consists of the letters $k+1,\dots, n$, and
$\Des(T^1,T^2):=\Des(T^1)\sqcup \Des(T^2)$.
\end{proposition}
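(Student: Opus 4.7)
The plan is to mimic the proof of the equivalence (ii) $\Leftrightarrow$ (iii) in Theorem~\ref{m1} (together with Theorem~\ref{m2}), transferring the argument from $\S_n$ to the Young subgroup $\S_k \times \S_{n-k}$. The analogy is clean: the irreducible characters of $\S_k \times \S_{n-k}$ are the tensor products $\chi^\lambda \otimes \chi^\mu$ with $\lambda \vdash k$ and $\mu \vdash n-k$, and the role played by $\SYT(\lambda)$ in the $\S_n$-case should be played here by pairs $\SYT(\lambda) \times \SYT(\mu)$ equipped with the combined descent map $\Des(T^1, T^2) := \Des(T^1) \sqcup \Des(T^2)$ (noting that $k \notin \Des(T^1,T^2)$).

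The first and central step is to establish the \emph{building block}: for every $\lambda \vdash k$ and $\mu \vdash n-k$, the set $\SYT(\lambda) \times \SYT(\mu)$ is $\S_k \times \S_{n-k}$-fine for $\chi^\lambda \otimes \chi^\mu$. This reduces to proving the identity
\[
\chi^\lambda(\eta_1)\, \chi^\mu(\eta_2) \;=\; \sum_{(T^1,T^2) \in (\SYT(\lambda)\times\SYT(\mu))_{\eta_1,\eta_2}} (-1)^{|\Des(T^1,T^2)\setminus S(\eta_1,\eta_2)|}
\]
for all $\eta_1 \vdash k$ and $\eta_2 \vdash n-k$. Since $S(\eta_1,\eta_2) = S(\eta_1) \sqcup \{k\} \sqcup (k+S(\eta_2))$ and $k \notin \Des(T^1,T^2)$, the sign exponent decomposes as $|\Des(T^1)\setminus S(\eta_1)| + |\Des(T^2)\setminus S(\eta_2)|$. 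The key observation (straightforward to verify by concatenating or splitting a witness permutation at position $k$, using that the first $k$ letters of an $(\eta_1,\eta_2)$-modal witness form an $\eta_1$-modal sequence on some $k$-element subset of $[n]$, whose standardization is an $\eta_1$-modal element of $\S_k$) is that $(T^1,T^2)$ is $(\eta_1,\eta_2)$-modal if and only if $T^1$ is $\eta_1$-modal and $T^2$ is $\eta_2$-modal. The sum then factors, and applying Theorem~\ref{m2} in $\S_k$ and $\S_{n-k}$ separately gives the desired product of characters.

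The backward direction then follows quickly: a (multi)set partition with the prescribed $\Des$-preserving bijections realizes $\BBB$, via Step~1 and additivity of the character formula in Definition~\ref{Young_character}, as fine for the representation $\bigoplus_i S^{\lambda^{(i)}} \otimes S^{\mu^{(i)}}$. For the forward direction, assuming $\BBB$ is $\S_k \times \S_{n-k}$-fine for $\rho$, I would decompose $\chi^\rho = \sum c_{\lambda,\mu}\, \chi^\lambda \otimes \chi^\mu$ with $c_{\lambda,\mu} \in \NN$ and compare the character formula for $\BBB$ with that for the multiset $\BBB^* := \bigsqcup_{\lambda,\mu} c_{\lambda,\mu}\, (\SYT(\lambda)\times\SYT(\mu))$. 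Both sides agree for every pair $(\eta_1,\eta_2)$ by hypothesis and Step~1, and a Möbius-inversion argument on $2^{[n-1]\setminus\{k\}}$ (parallel to the one underlying the $\S_n$ case of Theorem~\ref{m1}) forces the $\Des$-distributions over $\BBB$ and $\BBB^*$ to coincide; matching elements with identical descent sets yields the required partition and bijections.

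The main obstacle is Step~1, specifically the bookkeeping showing that both the modality condition and the sign weight factor cleanly across the two halves of $[n]$. Once this building block is in place, both implications are formal consequences of the character-theoretic framework established by Theorems~\ref{m1} and~\ref{m2}.
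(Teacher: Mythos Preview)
Your approach is precisely what the paper indicates: it omits the proof entirely, saying only that it is ``similar to the proof of~\cite[Theorem 1.5]{Adin-R}'', which is Theorem~\ref{m1} here. Your plan to transplant that argument to $\S_k\times\S_{n-k}$, with pairs $\SYT(\lambda)\times\SYT(\mu)$ serving as the irreducible building blocks, is exactly the intended route, and your factorization of both the sign and the modality condition across position $k$ is the right mechanism.

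One point to flag in your forward direction. The character formula of Definition~\ref{Young_character} is insensitive to whether $k\in\Des(\pi)$: since $k\in S(\mu,\nu)$ for every pair of compositions, the sign ignores it, and (as your own factorization shows) $(\mu,\nu)$-modality also depends only on $\Des(\pi)\cap([n-1]\setminus\{k\})$. Consequently your M\"obius inversion on $2^{[n-1]\setminus\{k\}}$ can recover only the distribution of $\Des(\cdot)\setminus\{k\}$ over $\BBB$, not the full $\Des$-distribution. Concretely, $\{21\}\subset\S_2$ is $\S_1\times\S_1$-fine for the trivial representation, yet $\Des(21)=\{1\}$ cannot match $\Des(T^1,T^2)=\emptyset$ for the unique pair of one-box tableaux. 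This is a wrinkle in the proposition as stated rather than a flaw in your method; the natural fix is to require the bijections to preserve only $\Des\setminus\{k\}$, and that weaker version is all that is actually invoked in the proof of Theorem~\ref{main}, where Proposition~\ref{Stanley_lemma} is applied to shuffles of the two blocks and is indifferent to a descent at position~$k$.
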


The proof is similar to the proof of~\cite[Theorem 1.5]{Adin-R} and
is omitted.

\begin{lemma}\label{lemma_restriction}
If $\BBB\subseteq \S_n$ is a fine (multi)set for the
$\S_n$-representation $\rho$, then for every $1\le k\le n$, $\BBB$
is a fine (multi)set of $\S_k\times \S_{n-k}$ for the restricted
representation $\rho\downarrow^{\S_n}_{\S_k\times \S_{n-k}}$.
\end{lemma}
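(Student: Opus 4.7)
The plan is to unwind the definition of $\S_k\times\S_{n-k}$-fineness and reduce it directly to the $\S_n$-fineness hypothesis, by observing that a conjugacy class in $\S_k\times\S_{n-k}$ of cycle type $(\mu,\nu)$ sits inside an $\S_n$-conjugacy class whose cycle type is the concatenation of $\mu$ and $\nu$.

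Concretely, I would proceed as follows. First, by the definition of restriction of characters, for every pair of compositions $\mu\vdash k$ and $\nu\vdash n-k$,
\[
\chi^{\rho\downarrow}(\mu,\nu)=\chi^{\rho}(\mu,\nu),
\]
where on the right $(\mu,\nu)$ is read as the concatenated composition of $n$ (equivalently, an element of $\S_k\times\S_{n-k}$ of cycle type $(\mu,\nu)$ has cycle type $\mu\cup\nu$ when viewed in $\S_n$, and character values are independent of the order of parts). Second, because $\BBB$ is fine for $\rho$, Theorem~\ref{m1}(ii) applied to the composition $(\mu,\nu)\in\Comp(n)$ gives
\[
\chi^{\rho}(\mu,\nu)=\sum_{b\in\BBB_{(\mu,\nu)}} m(b,\BBB)\,(-1)^{|\Des(b)\setminus S(\mu,\nu)|};
\]
here we use that the function $\chi^\BBB$ in Theorem~\ref{m1}(ii) is well-defined on compositions (not only on partitions) since it does not depend on the order of the parts, which lets us apply Theorem~\ref{m2} to the concatenation $(\mu,\nu)$. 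Third, the set $\BBB_{(\mu,\nu)}$ of $(\mu,\nu)$-modal elements of $\BBB$ coincides with the set $\BBB_{\mu,\nu}$ appearing in Definition~\ref{Young_character}, simply because $(\mu,\nu)$-modality is defined with respect to the concatenated composition. Combining these three identities yields precisely the formula required by Definition~\ref{Young_character}, proving that $\BBB$ is $\S_k\times\S_{n-k}$-fine for $\rho\downarrow^{\S_n}_{\S_k\times\S_{n-k}}$.

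The proof is essentially a bookkeeping argument, and the only genuine point to verify carefully is the passage from partitions (as in the statement of Theorem~\ref{m2}) to the concatenated composition $(\mu,\nu)$, which need not be a partition. This is the main obstacle, but it is handled by Theorem~\ref{m1}(ii), which guarantees that the alternating-sum expression $\chi^\BBB$ is invariant under permutations of the parts, hence that the partition-indexed formula from Theorem~\ref{m2} automatically extends to arbitrary compositions of $n$. No multiplicity issues arise since the formula in Theorem~\ref{m1}(ii) already incorporates the multiplicities $m(b,\BBB)$, so the argument goes through verbatim for multisets as stated in the lemma.
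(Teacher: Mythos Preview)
Your proof is correct and follows essentially the same route as the paper's: both arguments observe that the restricted character value at a $(\mu,\nu)$-class equals $\chi^\rho$ evaluated at the concatenated composition, then invoke the $\S_n$-fineness hypothesis to obtain the required alternating-sum formula from Definition~\ref{Young_character}. You are somewhat more careful than the paper in justifying the passage from partitions to the composition $(\mu,\nu)$ via Theorem~\ref{m1}(ii), and in tracking multiplicities, but the underlying idea is identical.
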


\begin{proof} Consider the natural embedding of $\S_k\times
\S_{n-k}$ in $\S_n$, where $\S_k$ permutes the letters $1,\dots,
k$ and $\S_{n-k}$ permutes the letters $k+1,\dots, n$. By
definition, for every $\sigma\in \S_k$ of cycle type $\mu$ and
every $\tau\in \S_{n-k}$ of cycle type $\nu$,
\[
\chi^{\rho\downarrow_{\S_k\times \S_{n-k}}}(\sigma,\tau)=
\chi^\rho(\sigma\tau)=\sum\limits_{\pi \in \BBB_{\mu,\nu}}
(-1)^{|\Des(\pi) \setminus S(\mu,\nu)|},
\]
and so $\BBB$ is a fine (multi)set for the restricted
representation $\rho\downarrow_{\S_k\times \S_{n-k}}$.
\end{proof}

\begin{remark}\label{m_shuffles2}
All statements in this subsection may be easily generalized from
the Young subgroup $\S_k\times \S_{n-k}$ to any Young
subgroup $\S_\eta$ of $\S_n$.
\end{remark}

\subsection{Right multiplication by an inverse descent class}

Now we are ready to prove an extended version of
Theorem~\ref{main_FD}. We use the notation $\{j_1,\dots,j_t\}_<$ to indicate that the elements of the set
satisfy $j_1< j_2< \cdots <j_t$.

For $J=\{j_1,\dots,j_t\}_<\subseteq [n-1]$, let $\S_{\bar J}$ denote the Young subgroup
$\S_{j_1}\times \S_{j_2-j_1}\times \cdots \times \S_{n-j_t}$. Let
\begin{equation}\label{eq:defRnJ}
R_{n,J}:=\{\pi \in \S_n:\ \Des(\pi)\subseteq
J\}=\bigsqcup_{I\subseteq J} D_{n,I},
\end{equation}
and let $R_{n,J}^{-1}:=\{\pi^{-1}: \pi \in R_{n,J}\}$.

\begin{theorem}\label{main}
Let $\BBB \subseteq \S_n$ be a fine set for the
$\S_n$-representation $\rho$. Then for every $J\subseteq [n-1]$, the following hold.
\begin{enumerate}
\item The multiset
\[
\BBB R_{n,J}^{-1}
\]
is a fine multiset of $\S_n$ for $(\rho\downarrow_{\S_{\bar
J}})\uparrow^{\S_n}$.

\item The multiset
\[
\BBB D_{n,J}^{-1}
\]
is a fine multiset of $\S_n$ for $\rho\otimes S^{Z_{n,J}}$, where
$Z_{n,J}$ is the ribbon as in
Definition~\ref{zigzag_subset}, and $S^{Z_{n,J}}$ is the
corresponding Specht module.
\end{enumerate}
\end{theorem}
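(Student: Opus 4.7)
My plan is to prove Part~1 first, and then deduce Part~2 by inclusion--exclusion combined with the projection (Frobenius reciprocity) formula.

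For Part~1, the starting point is the combinatorial observation that for any $\pi\in\S_n$, right multiplication by $R_{n,J}^{-1}$ produces exactly the set of shuffles of the $t+1$ consecutive blocks $\pi^{(1)},\ldots,\pi^{(t+1)}$ of $\pi$ of lengths $\eta_k:=j_k-j_{k-1}$ (setting $j_0=0$ and $j_{t+1}=n$). This holds because $\sigma\in R_{n,J}$ is exactly a permutation that is increasing on each block of positions, so right multiplication by $\sigma^{-1}$ reshuffles positions while preserving the relative order within each block. Using the Young-subgroup generalization of Lemma~\ref{lemma_restriction} (available by Remark~\ref{m_shuffles2}), $\BBB$ is $\S_{\bar J}$-fine for $\rho\downarrow_{\S_{\bar J}}$, so the corresponding generalization of Proposition~\ref{Young_equidistribution} provides a partition $\BBB=\bigsqcup_i\BBB_i$ with $\Des$-preserving bijections $\BBB_i\longleftrightarrow\prod_k\SYT(\lambda^{(i,k)})$, where the shapes $\lambda^{(i,k)}\vdash\eta_k$ encode the isotypic decomposition $\rho\downarrow_{\S_{\bar J}}\cong\bigoplus_i\bigotimes_k S^{\lambda^{(i,k)}}$.

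The next step is to combine this decomposition with the shuffle machinery. For each $i$, fix representatives $T_k^{(i)}\in\SYT(\lambda^{(i,k)})$ (each on the $k$-th block of letters $\{j_{k-1}+1,\ldots,j_k\}$), and let $\mathcal{C}_{T_k^{(i)}}$ be the corresponding Knuth class of the symmetric group on these letters. By Lemma~\ref{Knuth1}, the joint distribution of the tuple of descent sets over $\prod_k\SYT(\lambda^{(i,k)})$ equals that over $\prod_k\mathcal{C}_{T_k^{(i)}}$. Combining the $\Des$-preserving bijection $\BBB_i\leftrightarrow\prod_k\SYT(\lambda^{(i,k)})$ with the multi-shuffle extension of Proposition~\ref{Stanley_lemma} (which says that shuffle descent statistics depend only on the factor descent sets), the multisets $\BBB_i R_{n,J}^{-1}$ and $\mathcal{C}_{T_1^{(i)}}\shuffle\cdots\shuffle\mathcal{C}_{T_{t+1}^{(i)}}$ are $\Des$-equidistributed. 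The multi-shuffle version of Lemma~\ref{lemma_shuffles} then gives $\Q(\BBB_i R_{n,J}^{-1})=\prod_k s_{\lambda^{(i,k)}}$, which is the Frobenius image of $\bigl(\bigotimes_k S^{\lambda^{(i,k)}}\bigr)\uparrow^{\S_n}_{\S_{\bar J}}$. Summing over $i$ yields $\Q(\BBB R_{n,J}^{-1})=\ch((\rho\downarrow_{\S_{\bar J}})\uparrow^{\S_n})$, establishing Part~1.

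For Part~2, starting from $R_{n,I}^{-1}=\bigsqcup_{K\subseteq I}D_{n,K}^{-1}$ and M\"obius-inverting over the Boolean lattice gives the formal identity $D_{n,J}^{-1}=\sum_{I\subseteq J}(-1)^{|J\setminus I|}R_{n,I}^{-1}$, whence
\[
\Q(\BBB D_{n,J}^{-1})=\sum_{I\subseteq J}(-1)^{|J\setminus I|}\,\ch\!\left((\rho\downarrow_{\S_{\bar I}})\uparrow^{\S_n}\right)
\]
by Part~1. The projection formula $(\rho\downarrow_{\S_{\bar I}})\uparrow^{\S_n}\cong\rho\otimes(\mathbf{1}\uparrow^{\S_n}_{\S_{\bar I}})$ pulls $\rho$ outside the sum, and the ribbon character identity $\chi^{Z_{n,J}}=\sum_{I\subseteq J}(-1)^{|J\setminus I|}\mathbf{1}\uparrow^{\S_n}_{\S_{\bar I}}$---equivalent to $s_{Z_{n,J}}=\sum_{I\subseteq J}(-1)^{|J\setminus I|}h_{\eta(I)}$, which in turn follows by M\"obius inversion from $\Q(R_{n,I}^{-1})=h_{\eta(I)}$ (a direct consequence of Lemma~\ref{lem:DJ})---collapses the signed sum to $\ch(\rho\otimes S^{Z_{n,J}})$. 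Schur-positivity is automatic because $\rho\otimes S^{Z_{n,J}}$ is a genuine representation. The main obstacle is Part~1, specifically the precise matching between the ``block descent statistics'' of permutations in $\BBB_i$ and the tuple-of-descents statistics of $\prod_k\SYT(\lambda^{(i,k)})$; this rests on the Young-subgroup generalizations indicated in Remark~\ref{m_shuffles2} and on checking that the shuffle operation (right multiplication by $R_{n,J}^{-1}$) interacts correctly with the decomposition so that Lemma~\ref{lemma_shuffles} can be invoked block by block.
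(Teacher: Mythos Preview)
Your proposal is correct and follows essentially the same approach as the paper's own proof: the shuffle interpretation of right multiplication by $R_{n,J}^{-1}$, the restriction to $\S_{\bar J}$ via Lemma~\ref{lemma_restriction}, the decomposition from Proposition~\ref{Young_equidistribution} into products of Knuth classes, Stanley's shuffling lemma, and Lemma~\ref{lemma_shuffles} for Part~1, followed by inclusion--exclusion, the projection formula, and the ribbon character identity for Part~2. The only cosmetic difference is that the paper first spells out the case $J=\{k\}$ before invoking the multi-block generalizations, whereas you work directly at the level of an arbitrary $J$.
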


The {\em Kronecker product} of two symmetric functions $f,h\in \Lambda^n$
is defined by
\[
f*h:=\sum\limits_{\mu,\nu,\lambda\ \vdash n} \langle f,
s_\mu\rangle \langle h, s_\nu\rangle g_{\mu, \nu, \lambda}
s_\lambda,
\]
where
\[
g_{\mu, \nu, \lambda}:= \langle \chi^\mu\otimes \chi^\nu,
\chi^\lambda\rangle.
\]
With this definition, part 2 in Theorem~\ref{main} is equivalent to the fact that
\[
\Q(\BBB D_{n,J}^{-1})=\Q(\BBB) * \Q(D_{n,J}^{-1}).
\]

\begin{proof}[Proof of Theorem~\ref{main}] For the sake of clarity, let us first prove part 1 for a singleton $J=\{k\}$.

First, notice that $R_{n,\{k\}}^{-1}$ consists of all shuffles of
$1,\dots,k$ with $k+1,\dots,n$. Hence $\BBB R_{n,\{k\}}^{-1}$
consists of all shuffles of $(\pi(1),\dots,\pi(k))$ with
$(\pi(k+1),\dots,\pi(n))$ over all $\pi\in \BBB$.

Since $\BBB$ is a fine set, it follows from
Lemma~\ref{lemma_restriction} that it is also a fine set of
$\S_k\times \S_{n-k}$ for the restricted representation
$\rho\downarrow_{\S_k\times \S_{n-k}}$.
Proposition~\ref{Young_equidistribution} and 
Remark~\ref{clarification} now imply that there exist a multiset
partition $\BBB = \BBB_1 \sqcup \ldots \sqcup \BBB_m$ and
$\Des$-preserving bijections, for each $1 \le i \le m$, from
$\BBB_i$ to the cartesian product $K^{(i)}\times L^{(i)}$ of a
pair of Knuth classes of suitable shapes
$\lambda^{(i)}\vdash k$ and $\mu^{(i)}\vdash n-k$, where $K^{(i)}$
is on the letters $1,\dots, k$ and $L^{(i)}$ is on the letters
$k+1,\dots, n$. Together with Proposition~\ref{Stanley_lemma},
this implies that the distribution of the descent set over $\BBB
R_{n,\{k\}}^{-1}$, that is over all shuffles of
$(\pi(1),\dots,\pi(k))$ with $(\pi(k+1),\dots,\pi(n))$ for all
$\pi\in \BBB$ is equal to its distribution over the union of all
shuffles $K^{(i)}\shuffle L^{(i)}$ for $1\le i\le m$.
Lemma~\ref{lemma_shuffles} and Proposition~\ref{fine_list}(iii)
complete the proof of the first part for $J=\{k\}$.

By Remarks~\ref{m_shuffles} and~\ref{m_shuffles2}, and part 2 of
Lemma~\ref{lemma_shuffles}, this proof easily generalizes to any
subset $J=\{j_1,\dots,j_t\}\subseteq [n-1]$. Now $\BBB
R_{n,J}^{-1}$ consists of all shuffles of
$\pi(1),\dots,\pi(j_1)$; $\pi(j_1+1),\dots,\pi(j_2)$;
$\dots$; $\pi(j_t+1),\dots,\pi(n)$ over all $\pi\in \BBB$,\
and pairs of Knuth classes and shapes are replaced by
$t+1$-tuples.

\medskip

Next we prove part 2. By part 1, $\Q(\BBB R_{n,J}^{-1})$ is
symmetric and Schur-positive. Since $\Q(\BBB
R_{n,J}^{-1})=\sum_{I\subseteq J} \Q(\BBB D_{n,I}^{-1})$, it
follows by the inclusion-exclusion principle that
\begin{equation}
\label{eq:QBD}
\Q(\BBB D_{n,J}^{-1})
=\sum\limits_{I\subseteq J} (-1)^{|J\setminus I|} \Q(\BBB
R_{n,I}^{-1}).
\end{equation}
We conclude that $\Q(\BBB D_{n,J}^{-1})$ is symmetric.

To prove that it is also Schur-positive, recall the following
reciprocity result~\cite[Ch. 3.3 Ex. 5]{Serre}: if $H$ is a subgroup of
a group $G$, and $\psi$ and $\phi$ are representations of $G$ and $H$, respectively, then
\begin{equation}\label{eq:reciprocity}
(\phi\otimes \psi\downarrow_H)\uparrow^G \ \cong\ \phi
\uparrow^G\otimes \psi .
\end{equation}
Hence, in our setting,
\[
(\rho\downarrow_{\S_{\bar J}})\uparrow^{\S_n}\ \cong\ (1_{\S_{\bar
J}}\otimes \rho\downarrow_{\S_{\bar J}})\uparrow^{\S_n}\ \cong\
1_{\S_{\bar J}}\uparrow^{\S_n}\otimes \rho.
\]
It follows from Equation~\eqref{eq:QBD} that $\Q(\BBB
D_{n,J}^{-1})$ is the Frobenius image of
the representation
\[
\sum\limits_{I\subseteq J} (-1)^{|J\setminus
I|}(\rho\downarrow_{\S_{\bar I}})\uparrow^{\S_n} \ \cong\
\sum\limits_{I\subseteq J} (-1)^{|J\setminus I|} 1_{\S_{\bar
I}}\uparrow^{\S_n}\otimes \rho \ \cong\ \rho \otimes
\sum\limits_{I\subseteq J} (-1)^{|J\setminus I|} 1_{\S_{\bar
I}}\uparrow^{\S_n}.
\]
Recalling that the Frobenius image of the induced representation
$1_{\S_{\bar I}}\uparrow^{\S_n}$ is the corresponding homogenous
symmetric function~\cite[Corollary 7.18.3]{Stanley_ECII}, it
follows from~\cite[p. 295]{Gessel} that
\begin{equation}\label{eq:altsum}
\ch\left(\sum\limits_{I\subseteq J} (-1)^{|J\setminus I|} 1_{\S_{\bar
I}}\uparrow^{\S_n}\right)= s_{Z_{n,J}}=\Q(
D_{n,J}^{-1}).
\end{equation}
See also~\cite[Theorem 2]{Solomon} and~\cite[Theorem
1.2]{Stanley}. It follows that $\Q(\BBB D_{n,J}^{-1})$ is a
symmetric function corresponding to a tensor product of two
non-virtual representations and hence Schur-positive.
\end{proof}

\section{Vertical rotations}\label{sec:vertical}

In this section we explore some applications of Theorem~\ref{main} with a geometric flavor.
We use the term {\em vertical rotation} of $\pi\in\S_n$ to mean a permutation obtained by replacing each entry $\pi(i)$ with $\pi(i)+k\bmod n$ for some fixed $k$.

\subsection{Vertical rotations of inverse descent classes}

If we choose the fine set $\BBB$ to be a colayered grid class $\LLL^k_n$,
which is fine by Corollary~\ref{cor:LLLfine}, a consequence of
Theorem~\ref{main} is that $\LLL^k_n D_{n,J}^{-1}$ is a fine
multiset of $\S_n$ for every $k\ge1$ and every $J\subseteq[n-1]$.

Recall the notation $C_n=\langle c\rangle=\{c^k:\ 0\le k< n\}$,
where $c$ is the $n$-cycle $(1,2,\dots,n)$. Note that
$$C_n=\LLL^2_n=\G_n\left(\begin{array}{cc}
           1 & 0 \\
           0 & 1
         \end{array}\right).$$
For $A\subseteq \S_n$, the product $C_n A$ is the multiset of
vertical rotations of the elements of $A$.
The following is a consequence of Theorem~\ref{main}.

\begin{corollary}\label{cor:vertical}
For every $J\subseteq[n-1]$, the multiset $C_n D_{n,J}^{-1}$ of
vertical rotations of an inverse descent class is a fine multiset
for $S^{Z_{n,J}}\downarrow_{\S_{n-1}}\uparrow^{\S_n}$.
\end{corollary}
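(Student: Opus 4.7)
The plan is to apply Theorem~\ref{main}(2) with $\BBB = C_n$. First, Corollary~\ref{cor:LLLfine} tells us that $C_n = \LLL^2_n$ is a fine set with $\Q(C_n) = s_n + s_{n-1,1}$. To invoke Theorem~\ref{main}(2), I need to identify an $\S_n$-representation $\rho$ for which $C_n$ is fine, i.e., one satisfying $\ch(\chi^\rho) = s_n + s_{n-1,1}$. The natural candidate is $\rho = 1_{\S_{n-1}}\uparrow^{\S_n}$, induced from the trivial character of the Young subgroup $\S_{n-1}\times \S_1\cong \S_{n-1}$ (embedded in $\S_n$ as the stabilizer of $n$). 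By Pieri's rule,
\[
\ch(1_{\S_{n-1}}\uparrow^{\S_n}) \ =\ h_{n-1}h_1 \ =\ s_{n-1}s_1 \ =\ s_n + s_{n-1,1},
\]
which matches $\Q(C_n)$. Hence $C_n$ is fine for $\rho = 1_{\S_{n-1}}\uparrow^{\S_n}$.

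Applying Theorem~\ref{main}(2) then shows that $C_n D_{n,J}^{-1}$ is a fine multiset for the Kronecker product $\rho \otimes S^{Z_{n,J}} = (1_{\S_{n-1}}\uparrow^{\S_n}) \otimes S^{Z_{n,J}}$. To put this in the form asserted by the corollary, I invoke the reciprocity formula~\eqref{eq:reciprocity} with $G = \S_n$, $H = \S_{n-1}$, $\phi = 1_{\S_{n-1}}$, and $\psi = S^{Z_{n,J}}$:
\[
S^{Z_{n,J}}\downarrow_{\S_{n-1}}\uparrow^{\S_n} \ \cong\ (1_{\S_{n-1}} \otimes S^{Z_{n,J}}\downarrow_{\S_{n-1}})\uparrow^{\S_n} \ \cong\ 1_{\S_{n-1}}\uparrow^{\S_n} \otimes S^{Z_{n,J}},
\]
where the first isomorphism uses that tensoring with the trivial representation is the identity. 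This identifies $C_n D_{n,J}^{-1}$ as a fine multiset for $S^{Z_{n,J}}\downarrow_{\S_{n-1}}\uparrow^{\S_n}$, as claimed.

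There is essentially no obstacle once the preceding machinery is in place: the whole argument amounts to recognizing that $\Q(C_n)$ is the Frobenius image of the natural permutation module $1_{\S_{n-1}}\uparrow^{\S_n}$, after which Theorem~\ref{main}(2) and reciprocity do all the work. An alternative route would be to verify condition~(ii) of Theorem~\ref{m1} directly for $C_n D_{n,J}^{-1}$ by computing the character values of $S^{Z_{n,J}}\downarrow_{\S_{n-1}}\uparrow^{\S_n}$ at each cycle type and matching them against $\sum_{\pi \in (C_n D_{n,J}^{-1})_\mu} (-1)^{|\Des(\pi)\setminus S(\mu)|}$, but the route through Theorem~\ref{main}(2) is considerably cleaner.
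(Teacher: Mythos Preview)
Your proof is correct and follows essentially the same route as the paper's own argument: identify $\Q(C_n)=s_n+s_{n-1,1}$ with $\ch(1_{\S_{n-1}}\uparrow^{\S_n})$ via Pieri, apply Theorem~\ref{main}(2) to obtain fineness for $(1_{\S_{n-1}}\uparrow^{\S_n})\otimes S^{Z_{n,J}}$, and then rewrite using the reciprocity formula~\eqref{eq:reciprocity}. The paper's proof is virtually identical, differing only in citing Equation~\eqref{Pierri_1} explicitly for the Pieri step.
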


\begin{proof}
By Corollary~\ref{cor:LLLfine} together with Equation
\eqref{Pierri_1}, $C_n$ is a fine set with
$\Q(C_n)=s_n+s_{n-1,1}=s_1s_{n-1}=\ch(1_{\S_{n-1}}\uparrow^{\S_n})$.
By Lemma~\ref{lem:DJ},
$\Q(D_{n,J}^{-1})=\ch(S^{Z_{n,J}})$. Thus, by Theorem~\ref{main}
and Equation~\eqref{eq:reciprocity}, $C_n D_{n,J}^{-1}$ is  a fine
multiset for
\[
1_{\S_{n-1}}\uparrow^{\S_n}\otimes S^{Z_{n,J}}\ \cong \
(1_{\S_{n-1}}\otimes
S^{Z_{n,J}}\downarrow_{\S_{n-1}})\uparrow^{\S_n}\ \cong \S^{Z_{n,J}}\downarrow_{\S_{n-1}}\uparrow^{\S_n}.
\]
\end{proof}

\subsection{Vertical rotations of grids}\label{vertical_grids}

By Proposition~\ref{one-column_descent_classes}, every one-column grid class $\G^\vv_n$ is a disjoint union of inverse descent classes. Thus, we get the following consequence of Theorem~\ref{main}.

\begin{corollary}\label{cor:LLLGv}
For every one-column grid class $\G^\vv$ and every $k\ge1$,
\[
\LLL^k_n \G^\vv_n
\]
is a fine multiset of $\S_n$.
\end{corollary}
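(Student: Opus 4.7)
The plan is to combine three ingredients already established in the excerpt: the decomposition of one-column grid classes into inverse descent classes, the Schur-positivity of the colayered grid class $\LLL^k_n$, and the main product theorem (Theorem~\ref{main}). Specifically, I will write $\LLL^k_n \G^\vv_n$ as a multiset-disjoint union of products of the form $\LLL^k_n D_{n,J}^{-1}$, each of which is fine, and invoke Observation~\ref{union}.

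First, by Proposition~\ref{one-column_descent_classes}, the one-column grid class $\G^\vv_n$ decomposes as a disjoint union of inverse descent classes:
\[
\G^\vv_n = \bigsqcup_{\uu \in \{+,-\}^{n-1},\, \vv \nleq \uu} D_{n,J_\uu}^{-1}.
\]
Second, by Corollary~\ref{cor:LLLfine}, the colayered grid class $\LLL^k_n$ is a fine set of $\S_n$.

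Next, I would apply Theorem~\ref{main}(2) with $\BBB = \LLL^k_n$ to conclude that for each subset $J \subseteq [n-1]$, the multiset product $\LLL^k_n D_{n,J}^{-1}$ is a fine multiset of $\S_n$. Since the multiset product distributes over multiset-disjoint unions on the right, we obtain
\[
\LLL^k_n \G^\vv_n \;=\; \bigsqcup_{\uu \in \{+,-\}^{n-1},\, \vv \nleq \uu} \LLL^k_n D_{n,J_\uu}^{-1}
\]
as multisets. Finally, Observation~\ref{union} says that multiset unions of fine multisets are fine, so the displayed multiset is fine, completing the proof.

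There is no real obstacle here: once Theorem~\ref{main} and Proposition~\ref{one-column_descent_classes} are in place, the corollary reduces to the observation that $\G^\vv_n$ splits as a disjoint union of inverse descent classes and that fineness is preserved under multiset union. The only point requiring minimal care is checking that right multiplication by a set commutes with disjoint unions at the multiset level (which is immediate from the definition of the multiset product $AB$).
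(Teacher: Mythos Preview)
Your proof is correct and follows essentially the same approach as the paper: decompose $\G^\vv_n$ into inverse descent classes via Proposition~\ref{one-column_descent_classes}, apply Theorem~\ref{main} with the fine set $\BBB=\LLL^k_n$ (Corollary~\ref{cor:LLLfine}) to each factor, and conclude using Observation~\ref{union}. The paper states this argument in one sentence just before the corollary rather than writing it out, but the content is identical.
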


Taking $k=2$, Corollary~\ref{cor:LLLGv} implies that the multiset of vertical rotations $C_n \G^\vv_n$ is fine.
For an arbitrary $\vv$, we do not know if the underlying set is always fine, see Conjecture~\ref{conj:rotations-onecolumn}. However, we will show that this is the case sometimes.

Consider now the grid class $\G^{+^k}$, whose elements are shuffles of $k$ increasing sequences.
By the above paragraph, $C_n\G^{+^k}_n$ is a fine multiset. The underlying set
$\{C_n\G^{+^k}_n\}$ is the grid class $\G_n(M_k)$, where $M_k$ is
the $2k\times 2$ matrix whose odd-numbered rows are $(1,0)$ and whose even-numbered rows are $(0,1)$. The grid $\G(M_3)$ is drawn in
Figure~\ref{fig:GM3}. We will show in Section~\ref{sec:horizontal} that $\G_n(M_k)$ is a fine set. In the rest of this subsection we make some initial steps towards this goal.

\begin{figure}[htb]
\centering
\begin{tikzpicture}[scale=.6]
\draw (0,0) rectangle (6,6); \draw[dotted] (0,2)--(6,2);
\draw[dotted] (0,4)--(6,4); \draw[thick] (0,0)--(6,2);
\draw[thick] (0,2)--(6,4); \draw[thick] (0,4)--(6,6);
\end{tikzpicture}
\hspace{10mm}
\begin{tikzpicture}[scale=.6]
\draw (0,0) rectangle (6,6); \draw[dotted] (0,1)--(6,1);
\draw[dotted] (0,3)--(6,3); \draw[dotted] (0,5)--(6,5);
\draw[dotted] (3,0)--(3,6);
\draw[dotted,very thin] (0,2)--(6,2); \draw[dotted,very thin] (0,4)--(6,4);
\draw[thick] (3,0)--(6,1);
\draw[thick] (0,1)--(6,3); \draw[thick] (0,3)--(6,5); \draw[thick]
(0,5)--(3,6);
\end{tikzpicture}
\caption{The grids $\G^{+^3}$ (left) and $\G(M_3)$ (right).}
\label{fig:GM3}
\end{figure}

Cyclic descents were introduced by Cellini~\cite{Cellini} and
further studied in~\cite{Petersen, Dilks}.

\begin{defn}\label{def_cyc_des}
The {\em cyclic descent set} of a permutation $\pi\in \S_n$ is
$$\cDes(\pi)=\begin{cases} \Des(\pi) & \text{if }\pi(n)<\pi(1),\\
 \Des(\pi)\cup\{n\} & \text{if }\pi(n)>\pi(1).\end{cases}
$$
The {\em cyclic descent number} is $\cdes(\pi):=|\cDes(\pi)|$.
\end{defn}

The next lemma will be useful here and in Section~\ref{sec:horizontal}. Recall the notation $c=(1,2,\dots,n)\in C_n$.

\begin{lemma}\label{lem:cdes_rotations}
For every $\sigma\in\S_n$ and $j$, we have $\cdes(\sigma)=\cdes(\sigma c^j)=\cdes(c^j \sigma)$.
\end{lemma}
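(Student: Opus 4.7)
The plan is to handle the two invariances $\cdes(\sigma c^j)=\cdes(\sigma)$ and $\cdes(c^j\sigma)=\cdes(\sigma)$ separately, and in each case reduce to the single step $j=1$ and then iterate. Throughout, it will be convenient to identify $[n]$ with $\bbz/n\bbz$ and view $\cDes(\pi)$ as the set of cyclic indices $i$ with $\pi(i)>\pi(i+1)$, where the subscript is read modulo $n$ (so that the extra ``position $n$'' descent contributed when $\pi(n)>\pi(1)$ is treated on equal footing with the others).

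For the right multiplication, I would first observe that the one-line notation of $\sigma c$ is $\sigma(2)\sigma(3)\cdots\sigma(n)\sigma(1)$, a cyclic shift of that of $\sigma$. Under the identification above, this means that $(\sigma c)(i)=\sigma(i+1)$ for all $i\in\bbz/n\bbz$, so $\cDes(\sigma c)=\cDes(\sigma)-1$ as subsets of $\bbz/n\bbz$. In particular $|\cDes(\sigma c)|=|\cDes(\sigma)|$, and iterating gives $\cdes(\sigma c^j)=\cdes(\sigma)$.

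For the left multiplication, the permutation $c\sigma$ is obtained from $\sigma$ by incrementing each value by $1$ modulo $n$; equivalently, the unique entry equal to $n$ becomes $1$, and every other entry increases by exactly $1$. For any pair of cyclically adjacent positions neither of whose values is $n$, both values are shifted by the same amount and the descent/ascent status is preserved. The only two affected pairs are those adjacent to the position $k$ with $\sigma(k)=n$. Before multiplication these are $(\sigma(k-1),n)$ and $(n,\sigma(k+1))$, one ascent and one descent; after multiplication they become $(\sigma(k-1)+1,1)$ and $(1,\sigma(k+1)+1)$, one descent and one ascent. Thus the cyclic descent count is unchanged, and iteration yields $\cdes(c^j\sigma)=\cdes(\sigma)$.

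There is no real obstacle here; the only point requiring care is bookkeeping, namely distinguishing positions from values and making sure the boundary case $k=1$ (so that the ``position before $k$'' is the cyclic adjacency between positions $n$ and $1$) is handled uniformly by the cyclic indexing convention adopted at the start.
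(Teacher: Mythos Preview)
Your proof is correct and follows essentially the same approach as the paper's: both handle right multiplication by observing that the cyclic shift of the one-line word simply translates $\cDes$ inside $\bbz/n\bbz$, and both handle left multiplication by noting that incrementing values modulo $n$ leaves every cyclic adjacency unaffected except the two flanking the position of $n$, where an ascent--descent pair is swapped for a descent--ascent pair. The paper is terser on the right-multiplication case (it simply says this is ``clear by definition''), while you spell out the translation $\cDes(\sigma c)=\cDes(\sigma)-1$ explicitly, but there is no substantive difference.
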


\begin{proof}
It is clear by definition that horizontal rotations preserve $\cdes$, and so $\cdes(\sigma)=\cdes(\sigma c^j)$. For vertical rotations, note that to obtain $c\pi$ from a permutation $\pi\in\S_n$ we add $1$ from the values of all the entries, except that the entry $n$ becomes $1$. If $i$ is the position of $n$ in $\pi$, so that $\pi(i)=n$ and $c\pi(i)=1$, then the elements of $\cDes(\pi)$ and $\cDes(c\pi)$ other than $i-1,i$ are the same (where we are defining $i-1=n$ if $i=1$). On the other hand, $\cDes(\pi)\cap\{i-1,i\}=\{i\}$ and $\cDes(c\pi)\cap\{i-1,i\}=\{i-1\}$. It follows that $\cdes(\pi)=\cdes(c\pi)$, and so, by iteration, $\cdes(\sigma)=\cdes(c^j \sigma)$.
\end{proof}

Recall from Observation~\ref{obs:Gk} that
$\G^{+^k}_n=\{\pi\in\S_n: \des(\pi^{-1})\le k-1\}$. One can
characterize $\G_n(M_k)$ similarly as follows.

\begin{lemma}\label{lem:GMkcdes} For every $k\ge1$,
$$\G_n(M_k)=\{\pi\in\S_n:\cdes(\pi^{-1})\le k\}.$$
In particular, by Lemma~\ref{lem:cdes_rotations}, $\G_n(M_k)$ is closed under vertical and horizontal rotations.
\end{lemma}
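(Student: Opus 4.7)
The plan is to unfold the identification $\G_n(M_k) = \{C_n \G^{+^k}_n\}$ stated just above the lemma and reduce the claim to Observation~\ref{obs:Gk}. Specifically, $\pi \in \G_n(M_k)$ iff $c^{-j}\pi \in \G^{+^k}_n$ for some $0\le j<n$, which by Observation~\ref{obs:Gk} translates to the existence of some $j$ with $\des((c^{-j}\pi)^{-1}) = \des(\pi^{-1}c^j) \le k-1$. Setting $\sigma = \pi^{-1}$, the entire lemma reduces to the identity
\[
\min_{0\le j<n}\,\des(\sigma c^j) \;=\; \cdes(\sigma)-1.
\]

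To establish this identity, I would first note that $\des(\tau)$ equals $\cdes(\tau)-1$ when $n \in \cDes(\tau)$ and $\cdes(\tau)$ otherwise; combined with Lemma~\ref{lem:cdes_rotations}, which gives $\cdes(\sigma c^j) = \cdes(\sigma)$ for every $j$, this forces $\des(\sigma c^j) \in \{\cdes(\sigma)-1,\cdes(\sigma)\}$. It then suffices to exhibit some $j$ for which $n\in\cDes(\sigma c^j)$. To do this, I would first observe that $\cDes(\sigma)\neq \emptyset$ for any $\sigma$: otherwise $\Des(\sigma)=\emptyset$, forcing $\sigma=12\cdots n$, but this permutation already has $n\in\cDes(\sigma)$, a contradiction. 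Second, a direct computation on the defining inequalities shows that horizontal rotation cyclically shifts the cyclic descent set, namely $\cDes(\sigma c) \equiv \cDes(\sigma)-1 \pmod n$ viewed in $\bbz/n$. By iterating this shift, any chosen element of $\cDes(\sigma)$ can be rotated into position~$n$, attaining the minimum. Combining yields $\pi\in\G_n(M_k)$ iff $\cdes(\pi^{-1})\le k$.

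The closure claim in the ``in particular'' clause then follows immediately from Lemma~\ref{lem:cdes_rotations}: for any $i$, $\cdes((c^i\pi)^{-1}) = \cdes(\pi^{-1}c^{-i}) = \cdes(\pi^{-1})$ and $\cdes((\pi c^i)^{-1}) = \cdes(c^{-i}\pi^{-1}) = \cdes(\pi^{-1})$, so the condition $\cdes(\pi^{-1})\le k$ is preserved under both vertical and horizontal rotation of $\pi$. The only genuinely new computation needed is the cyclic-shift formula for $\cDes$ under right multiplication by $c$, which I expect to be the main technical step but is nonetheless a short verification directly from the definition of $\cDes$.
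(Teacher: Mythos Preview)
Your proof is correct and follows essentially the same approach as the paper: both start from $\G_n(M_k)=\{C_n\G^{+^k}_n\}$, apply Observation~\ref{obs:Gk} and Lemma~\ref{lem:cdes_rotations}, and then exhibit a rotation achieving $\des=\cdes-1$. The only minor difference is in this last step: the paper picks the explicit rotation with $\sigma(n)=n$ (which forces $n\in\cDes(\sigma^{-1})$ immediately), whereas you establish the cyclic-shift formula $\cDes(\sigma c)\equiv\cDes(\sigma)-1\pmod n$ and invoke nonemptiness of $\cDes$---a slightly more general but equally short route to the same conclusion.
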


\begin{proof}
By construction of the grid for $\G(M_k)$, we have that $\pi\in\G_n(M_k)$ if and only if $\pi$ is a vertical rotation of some $\sigma\in\G^{+^k}_n$, that is, $\pi=c^j\sigma$ for some $j$ and some $\sigma$ with $\des(\sigma^{-1})\le k-1$. By Lemma~\ref{lem:cdes_rotations}, $\cdes(\pi^{-1})=\cdes(\sigma^{-1}c^{-j})=\cdes(\sigma^{-1})\le\des(\sigma^{-1})+1\le k$.

For the converse, suppose that $\cdes(\pi^{-1})\le k$. By rotating $\pi$ vertically until its rightmost entry is $n$,
we can write $\pi=c^j\sigma$ for some $j$ and some $\sigma$ with $\sigma(n)=n$. Now, since $\cdes(\sigma^{-1})=\cdes(\pi^{-1})\le k$ and $\des(\sigma^{-1})=\cdes(\sigma^{-1})-1$,
we have that $\des(\sigma^{-1})\le k-1$, and so  $\sigma\in\G^{+^k}_n$.
\end{proof}

It follows from Lemma~\ref{lem:GMkcdes} and  Observation~\ref{obs:Gk}, or directly by looking at the drawings of the grids, that
$\G(M_{k-1})\subset\G^{+^k}\subset\G(M_k)$.

\begin{proposition}\label{prop:rotated_shuffles}
For every $k\ge1$, the quasisymmetric function $\Q(\G_n(M_k))$
is symmetric.
\end{proposition}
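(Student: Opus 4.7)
The plan is to prove the statement by induction on $k$, exploiting the fact that $\Q(C_n \G^{+^k}_n)$ is symmetric. This latter fact follows from Corollary~\ref{cor:LLLGv}, applied with the $2$-colayered class $\LLL^2 = C_n$ and the one-column class $\G^{+^k}$. The base case $k = 1$ holds because $\G_n(M_1) = C_n$, so $\Q(\G_n(M_1)) = s_n + s_{n-1,1}$ is symmetric by Corollary~\ref{cor:LLLfine}.

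The heart of the inductive step is a multiplicity calculation: for each $\pi \in \S_n$, I will determine the multiplicity of $\pi$ in the multiset $C_n \G^{+^k}_n$ as a function of $\cdes(\pi^{-1})$. A pair $(c^j, \sigma) \in C_n \times \G^{+^k}_n$ multiplies to $\pi$ precisely when $\sigma = c^{-j}\pi$ lies in $\G^{+^k}_n$, which by Observation~\ref{obs:Gk} amounts to $\des(\pi^{-1} c^j) \le k - 1$. Setting $\tau = \pi^{-1}$, right multiplication by $c^j$ cyclically shifts $\cDes(\tau)$, so $\cdes(\tau c^j) = \cdes(\tau)$ (as in Lemma~\ref{lem:cdes_rotations}), while $\des(\tau c^j) = \cdes(\tau) - \varepsilon_j$, where $\varepsilon_j$ is the indicator of $n \in \cDes(\tau c^j)$. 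A brief count shows that as $j$ ranges over $\{0,1,\dots,n-1\}$, one has $\varepsilon_j = 1$ for exactly $\cdes(\tau)$ of these values. It follows that the multiplicity of $\pi$ in $C_n \G^{+^k}_n$ equals $n$ if $\cdes(\pi^{-1}) \le k - 1$, equals $k$ if $\cdes(\pi^{-1}) = k$, and equals $0$ otherwise.

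Combined with Lemma~\ref{lem:GMkcdes}, which identifies $\G_n(M_k)$ with $\{\pi : \cdes(\pi^{-1}) \le k\}$, this yields
\[
\Q(C_n \G^{+^k}_n) = (n - k)\,\Q(\G_n(M_{k-1})) + k\,\Q(\G_n(M_k)),
\]
so $\Q(\G_n(M_k))$ is expressed as a linear combination of two symmetric functions, completing the induction. The main obstacle is the multiplicity calculation itself, and in particular the bookkeeping of how $\Des$ and $\cDes$ transform under right multiplication by powers of $c$; once this is settled, the inductive identity falls out cleanly.
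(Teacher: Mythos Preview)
Your proof is correct and follows essentially the same approach as the paper: induction on $k$, base case $\G_n(M_1)=C_n$, and a multiplicity analysis of the multiset $C_n\G^{+^k}_n$ leading to the recurrence $k\,\Q(\G_n(M_k))=\Q(C_n\G^{+^k}_n)-(n-k)\,\Q(\G_n(M_{k-1}))$. The only cosmetic difference is that the paper phrases the multiplicity count in terms of orbits of elements $\pi\in\G^{+^k}_n$ under left multiplication by $C_n$, whereas you compute the multiplicity of each target permutation directly via the count of $j$ with $n\in\cDes(\pi^{-1}c^j)$; these are the same calculation.
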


\begin{proof}
As mentioned after Corollary~\ref{cor:LLLGv}, $C_n\G^{+^k}_n$ is a fine multiset, and
$\G_n(M_k)=\{C_n\G^{+^k}_n\}$. Our proof is by induction on $k$.
For $k=1$, $\G_n(M_1)=C_n=\LLL^2_n$;
thus, by Corollary~\ref{cor:LLLfine},
\begin{equation}\label{eq:M1}\Q(\G_n(M_1))=\Q(\LLL^2_n)=s_n+s_{n-1,1}.\end{equation}

For $k\ge2$, we look at the multiplicities of the elements in
$C_n\G^{+^k}_n$. Take $\pi\in\G^{+^k}_n$, and consider two cases:
\begin{itemize}
\item If $\pi\in \G_n(M_{k-1})$, then its orbit $C_n\pi$ is contained in $\G_n(M_{k-1})$ by Lemma~\ref{lem:GMkcdes},
hence in $\G^{+^k}_n$. Thus, since each of the $n$ permutations in $C_n\pi$ generate the same orbit,
the permutations in this orbit appear with multiplicity $n$ in $C_n\G^{+^k}_n$.

\item If $\pi\in\G^{+^k}_n\setminus\G_n(M_{k-1})$ (equivalently, $\des(\pi^{-1})=k-1$ and
$\cdes(\pi^{-1})=k$, by Lemma~\ref{lem:GMkcdes}), then its orbit $C_n\pi$ contains exactly $k$
elements of $\G^{+^k}_n$. Indeed, these are elements of the form $c^j\pi$, with $0\le j<n$, satisfying $\des((c^j\pi)^{-1})\le k-1$, which, using Lemma~\ref{lem:cdes_rotations}, is equivalent to the condition $\des(\pi^{-1}c^{-j})\le \cdes(\pi^{-1}c^{-j})-1$. But this happens precisely when $n\in\cDes(\pi^{-1}c^{-j})$, which holds for exactly $k$ values of $j$.
Since each of the $k$ elements of $C_n\pi\cap\G^{+^k}_n$ generate the same orbit $C_n\pi$, the permutations in this orbit
appear with multiplicity $k$ in $C_n\G^{+^k}_n$.
\end{itemize}
It follows that $$\Q(C_n\G^{+^k}_n)=n\,
\Q(\G_n(M_{k-1}))+k\,\Q(\G_n(M_{k})\setminus\G_n(M_{k-1})),$$ and
so
\begin{equation}\label{eq:recurrence}
k\,\Q(\G_n(M_{k}))=\Q(C_n\G^{+^k}_n)-(n-k)\,\Q(\G_n(M_{k-1})).
\end{equation}
We conclude that $\Q(\G_n(M_k))$ is symmetric.
\end{proof}

For $k=2$, the above argument can be used to show that $\Q(\G_n(M_2))$ is Schur-positive.
In Section~\ref{sec:horizontal}, it will be proved that $\Q(\G_n(M_k))$ is Schur-positive for all $k$.

\begin{proposition}\label{R2}
$\G_n(M_2)$ is a fine set, and
\begin{multline*}\Q(\G_n(M_2))=s_n+(n-1)s_{n-1,1}+\sum_{a=2}^{\lfloor\frac{n}{2}\rfloor-1}(2n-4a+2)s_{n-a,a}+\sum_{a=1}^{\lfloor \frac{n-1}{2}\rfloor}(n-2a)s_{n-a-1,a,1}\\
+\begin{cases} 2s_{\frac{n}{2},\frac{n}{2}} & \text{for even }n\ge4, \\ 4s_{\frac{n+1}{2},\frac{n-1}{2}}& \text{for odd }n\ge5, \\
0 & \text{for }n\le 3. \end{cases}
\end{multline*}
\end{proposition}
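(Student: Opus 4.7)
The plan is to use the recurrence derived in the proof of Proposition~\ref{prop:rotated_shuffles} with $k=2$, namely
$$2\,\Q(\G_n(M_2))=\Q(C_n\G^{+^2}_n)-(n-2)\,\Q(\G_n(M_1)),$$
and compute the right-hand side explicitly. Since $\G_n(M_1)=C_n$, Corollary~\ref{cor:LLLfine} gives $\Q(\G_n(M_1))=s_n+s_{n-1,1}$. For the first term, Proposition~\ref{one-column_descent_classes} writes $\G^{+^2}_n=\bigsqcup_{|J|\le 1}D_{n,J}^{-1}$, and summing Theorem~\ref{main}(2) over these $J$ yields
$$\Q(C_n\G^{+^2}_n)=\Q(C_n)*\Q(\Sh_n),$$
where $*$ is the Kronecker product. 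Since $\Q(C_n)=s_1 s_{n-1}=\ch(1_{\S_{n-1}}\uparrow^{\S_n})$, Equation~\eqref{eq:reciprocity} identifies the map $g\mapsto \Q(C_n)*g$ with the operator $g\mapsto g\downarrow_{\S_{n-1}}\uparrow^{\S_n}$.

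I would then apply the branching rule (remove one box, then add one box in every valid way) to each summand of Corollary~\ref{cor:QSh}'s expression $\Q(\Sh_n)=s_n+\sum_{a=1}^{\lfloor n/2\rfloor}(n-2a+1)\,s_{n-a,a}$. In the generic range, where $n-a-1\ge a+1$, this yields
$$(s_{n-a,a})\downarrow\uparrow=s_{n-a+1,a-1}+2\,s_{n-a,a}+s_{n-a-1,a+1}+s_{n-a,a-1,1}+s_{n-a-1,a,1},$$
whereas at $a=\lfloor n/2\rfloor$ some summands drop because they would produce non-partitions. Collecting, the coefficient of $s_{n-b,b}$ picks up contributions from $a\in\{b-1,b,b+1\}$ totaling $(n-2b+3)+2(n-2b+1)+(n-2b-1)=4n-8b+4$, while the coefficient of $s_{n-b-1,b,1}$ picks up contributions only from $a\in\{b,b+1\}$ totaling $(n-2b+1)+(n-2b-1)=2n-4b$. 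Subtracting $(n-2)$ from the coefficients of $s_n$ and $s_{n-1,1}$ and halving everything reproduces the claimed generic coefficients $1$, $n-1$, $2n-4a+2$, and $n-2a$. The small special cases $b\in\{0,1\}$ are verified directly using the $a=0$ and $a=1$ contributions.

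The main technical obstacle is handling the boundary $a=\lfloor n/2\rfloor$ with care. For even $n=2m$ one obtains $(s_{m,m})\downarrow\uparrow=s_{m+1,m-1}+s_{m,m}+s_{m,m-1,1}$, which combined with the $a=m-1$ contribution gives coefficient $3+1=4$ for $s_{m,m}$, halving to the isolated term $2\,s_{m,m}$. For odd $n=2m+1$ one obtains $(s_{m+1,m})\downarrow\uparrow=s_{m+2,m-1}+2\,s_{m+1,m}+s_{m,m,1}+s_{m+1,m-1,1}$, which combined with the $a=m-1$ contribution gives coefficient $4+4=8$ for $s_{m+1,m}$, halving to $4\,s_{(n+1)/2,(n-1)/2}$. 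The cases $n\le 3$ are immediate since $\G_n(M_2)=\S_n$ there. Once the bookkeeping is complete, all final coefficients are manifestly nonnegative integers, so $\Q(\G_n(M_2))$ is Schur-positive and $\G_n(M_2)$ is a fine set.
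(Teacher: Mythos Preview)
Your approach is correct and essentially identical to the paper's: both start from the recurrence~\eqref{eq:recurrence} at $k=2$, invoke $\Q(\G_n(M_1))=s_n+s_{n-1,1}$, express $\Q(C_n\G^{+^2}_n)$ as the image of $\Q(\Sh_n)$ under $\downarrow_{\S_{n-1}}\uparrow^{\S_n}$ (the paper via Corollary~\ref{cor:vertical}, you via Theorem~\ref{main}(2) and~\eqref{eq:reciprocity}, which is exactly how that corollary is proved), and then apply the branching rule term by term to Corollary~\ref{cor:QSh}. You supply more of the bookkeeping for the generic and boundary coefficients than the paper does, but the argument is the same; one minor point is that the decomposition $\G^{+^2}_n=\bigsqcup_{|J|\le 1}D_{n,J}^{-1}$ is Observation~\ref{obs:Gk} rather than Proposition~\ref{one-column_descent_classes}.
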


\begin{proof}
Using Equations~\eqref{eq:M1} and~\eqref{eq:recurrence} for $k=2$, we have
\begin{equation}\label{eq:GM2}
\Q(\G_n(M_2))=\frac{1}{2}\left(\Q(C_n\G^{+^2}_n)-(n-2)(s_n+s_{n-1,1})\right).
\end{equation}
Since $\Sh_n$ is a union of descent classes,
Corollaries~\ref{cor:vertical} and~\ref{cor:QSh} imply that
\[
\Q(C_n\G_n^{+^2})=\ch\left(\chi^{(n)}\downarrow_{\S_{n-1}}\uparrow^{\S_n}+\sum_{a=1}^{\lfloor\frac{n}{2}\rfloor}
(n-2a+1)\chi^{(n-a,a)}\downarrow_{\S_{n-1}}\uparrow^{\S_n}\right).
\]
Counting all the ways to remove and add a box in a two-row shape, we obtain
\begin{multline*}
\Q(C_n\G_n^{+^2})=ns_n+(3n-4)s_{n-1,1}+\sum_{a=2}^{\lfloor\frac{n}{2}\rfloor-1}(4n-8a+4)s_{n-a,a}+\sum_{a=1}^{\lfloor \frac{n-1}{2}\rfloor}(2n-4a)s_{n-a-1,a,1}\\
+\begin{cases} 4s_{\frac{n}{2},\frac{n}{2}} & \text{for even }n\ge4, \\ 8s_{\frac{n+1}{2},\frac{n-1}{2}}& \text{for odd }n\ge5, \\
0 & \text{for }n\le 3. \end{cases}
\end{multline*}
The formula for $\Q(\G_n(M_2))$ follows now from
Equation~\eqref{eq:GM2}.
\end{proof}

\subsection{Arc permutations revisited}\label{sec:arc_revisited}

In general, for an arbitrary one-column grid class $\G^\vv$, the set $\{C_n\G^\vv\}$ of its
vertical rotations may not be a grid class, but it is a union of grid classes. For example, taking the class $\LL=\G^{-+}$ of left-unimodal permutations, we obtain the following.

\begin{observation}\label{obs:arc_revisited}
Arc permutations are obtained as vertical rotations of left-unimodal permutations, that is,
$$\A_n=\{C_n \LL_n\},$$
and so they can be described as a union of grid classes, namely the two drawn in Figure~\ref{fig:grid_arc}:
$$\A_n=\G_n\left(\begin{array}{cc}
           1 & 0 \\
           -1 & 0 \\
           0 & -1 \\
           0 & 1 \\
         \end{array}
       \right)\ \cup\
\G_n\left(\begin{array}{cc}
           0 & -1 \\
           0 & 1 \\
           1 & 0 \\
           -1 & 0 \\
         \end{array}
       \right).$$
\end{observation}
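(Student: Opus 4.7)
The plan is to verify both assertions in the observation: the identity $\A_n=\{C_n\LL_n\}$ and the subsequent description as a union of two grid classes. I would handle these as two separate steps.

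First, for $\A_n=\{C_n\LL_n\}$, the plan is a double inclusion. For $\{C_n\LL_n\}\subseteq\A_n$, I would use that every prefix of $\sigma\in\LL_n$ is an interval in $\bbz$, and that left-multiplication by $c^j$ adds $j$ modulo $n$ to each entry, turning such an interval (as a subset of $\{1,\dots,n\}$) into an interval in $\bbz_n$. For the reverse inclusion, given $\pi\in\A_n$, I would choose $j\equiv-\pi(n)\pmod n$ so that $(c^j\pi)(n)=n$; the forward direction implies $\A_n$ is closed under $C_n$, so $c^j\pi$ is still arc, and having $n$ in the last position means no proper prefix contains $n$, hence no proper prefix contains both $1$ and $n$. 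Since a wrapping interval in $\bbz_n$ must contain both of those elements, each proper prefix of $c^j\pi$ is a non-wrapping interval in $\bbz_n$, i.e., an interval in $\bbz$, and thus $c^j\pi\in\LL_n$, giving $\pi=c^{-j}(c^j\pi)\in C_n\LL_n$.

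Next, for the grid-class description, the geometric picture is that the $\LL=\G^{-+}$ grid is a $\lambda$-shape whose $-$ segment sits on the bottom and $+$ segment on the top, meeting at a point on the left boundary. Vertical rotation by $c^j$ shifts all $y$-coordinates by $j$ modulo $n$, causing the top portion of one segment to wrap around to the bottom. The plan is to argue that when only a portion of the $+$ segment wraps, the resulting configuration is a polyline of slopes $+,-,+$ from bottom to top, which is exactly the shape of $M_1$; and that when instead a portion of the $-$ segment wraps, the polyline has slopes $-,+,-$, matching $M_2$. This shows every $c^j\sigma$ with $\sigma\in\LL_n$ is drawable on one of $\G(M_1)$ or $\G(M_2)$, and conversely any drawing on either polyline can be un-rotated to fit on the $\LL$ grid, completing the identification.

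The hard part will be making the geometric identification precise, since it requires checking that the placement freedom within the cells of the $4\times 2$ matrices $M_1$ and $M_2$ is rich enough to encode all rotation amounts $j$ simultaneously rather than only a single fixed one. I would handle this by a case analysis on how many dots lie in the wrapped region after rotation, verifying in each case that the corresponding polyline placement is valid.
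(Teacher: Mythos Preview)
The paper states this as an \emph{Observation} with no accompanying proof; it is treated as essentially self-evident, so there is no argument in the paper to compare against. Your job, then, is just to supply a correct justification, and what you propose does that.

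Your first part is correct. One small wording issue: you say ``the forward direction implies $\A_n$ is closed under $C_n$,'' but the forward inclusion $\{C_n\LL_n\}\subseteq\A_n$ does not literally give closure. What you need is the direct (and trivial) fact that shifting all values by $j\bmod n$ sends cyclic intervals to cyclic intervals, hence $c^j\A_n\subseteq\A_n$. With that in place, your rotate-so-that-$n$-is-last argument is clean and complete.

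For the second part, your geometric picture is accurate: the segments in each of the two $4\times 2$ grids actually form a single connected polyline of slopes $+,-,+$ (for $M_1$) and $-,+,-$ (for $M_2$), matching what one gets by wrapping the top of the $\LL$-grid. Your worry about ``encoding all rotation amounts $j$ simultaneously'' is the right thing to flag, and it dissolves once you observe that the number of dots placed in each cell of the grid is unconstrained, so the break between the wrapped and unwrapped parts can sit anywhere. That said, the case analysis you propose is more laborious than necessary. A slicker route, once you have $\A_n=\{C_n\LL_n\}$, is to argue directly on $\pi\in\A_n$: if $n$ occurs at position $i_0$ before $1$ occurs, then every prefix of length $\le i_0$ is an ordinary interval ending at $n$, so $\pi(1)\cdots\pi(i_0)$ is left-unimodal on $\{a,\dots,n\}$; the remaining entries are then a shuffle of the increasing run $1,2,\dots$ (clockwise extensions) with the decreasing run $a-1,a-2,\dots$ (counterclockwise extensions), which is exactly a right-unimodal word on $\{1,\dots,a-1\}$. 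This lands $\pi$ in $\G_n(M_1)$; the case $1$ before $n$ gives $\G_n(M_2)$ symmetrically. The reverse containment is a short check that prefixes of such concatenations are cyclic intervals. This avoids tracking drawings altogether.
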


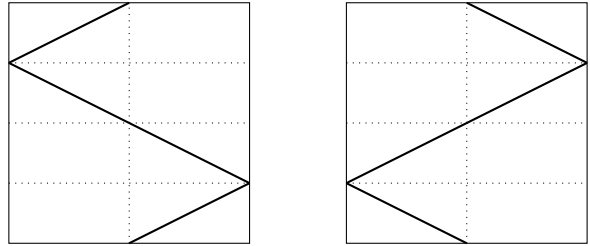
\begin{figure}[htb]
\centering
\begin{tikzpicture}[scale=.8]
\draw (0,0) rectangle (4,4); \draw[dotted] (0,1)--(4,1);
\draw[dotted] (0,2)--(4,2); \draw[dotted] (0,3)--(4,3);
\draw[dotted] (2,0)--(2,4); \draw[thick] (2,4)--(0,3);
\draw[thick] (4,1)--(0,3); \draw[thick] (2,0)--(4,1);
\end{tikzpicture}
\hspace{10mm}
\begin{tikzpicture}[scale=.8]
\draw (0,0) rectangle (4,4); \draw[dotted] (0,1)--(4,1);
\draw[dotted] (0,2)--(4,2); \draw[dotted] (0,3)--(4,3);
\draw[dotted] (2,0)--(2,4); \draw[thick] (2,4)--(4,3);
\draw[thick] (0,1)--(4,3); \draw[thick] (2,0)--(0,1);
\end{tikzpicture}
\caption{\label{fig:grid_arc} Grids for arc permutations.}
\end{figure}

The following result reformulates~\cite[Theorem 5]{ER1}. Here we
give a short proof, as an immediate consequence of our approach.

\begin{proposition}
\label{prop:rotated_leftunimodal}
$\A_n$ is a fine set, and
\begin{equation}\label{eq_arc_schur}
\Q(\A_n)=s_{n}+s_{1^n}+\sum\limits_{k=2}^{n-2}s_{n-k,2,1^{k-2}}+2\sum\limits_{k=1}^{n-2}s_{n-k,1^k}.
\end{equation}
\end{proposition}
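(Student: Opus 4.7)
My plan is to use the identity $\A_n = \{C_n\LL_n\}$ from Observation~\ref{obs:arc_revisited} together with the decomposition $\LL_n = \bigsqcup_{k=0}^{n-1} D_{n,[k]}^{-1}$ from Equation~\eqref{Eq:LL}. Since $Z_{n,[k]}$ is the hook $(n-k,1^k)$, Corollary~\ref{cor:vertical} implies that $C_n\LL_n = \bigsqcup_{k=0}^{n-1} C_n D_{n,[k]}^{-1}$ is a fine multiset with
\[
\Q(C_n\LL_n) = \sum_{k=0}^{n-1} \ch\bigl((\chi^{(n-k,1^k)}\downarrow_{\S_{n-1}})\uparrow^{\S_n}\bigr).
\]

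The crux is to show that, for $n\ge 2$, each $\pi\in\A_n$ appears in the multiset $C_n\LL_n$ with multiplicity exactly $2$, whence $\Q(\A_n) = \tfrac{1}{2}\Q(C_n\LL_n)$. This multiplicity equals the number of $j\in\{0,\dots,n-1\}$ with $c^j\pi\in\LL_n$. Because $\pi$ is an arc permutation, the length-$(n-1)$ prefix of $c^j\pi$ is $[n]\setminus\{(\pi(n)+j)\bmod n\}$, which is an interval in $\bbz$ iff the missing element is $1$ or $n$, yielding exactly two values of $j$. When this holds, every shorter prefix of $c^j\pi$ is an interval in $\bbz_n$ contained in $\{2,\dots,n\}$ or $\{1,\dots,n-1\}$; such an interval cannot wrap around, so it is automatically an interval in $\bbz$, and hence $c^j\pi\in\LL_n$.

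For the Schur expansion, I expand via Frobenius reciprocity: the coefficient of $s_\lambda$ in $\Q(C_n\LL_n)$ equals twice the number of hook sub-shapes $\nu\vdash n-1$ of $\lambda$, the factor $2$ arising because each hook $(a,1^b)\vdash n-1$ is contained in precisely the two hooks $(a+1,1^b)$ and $(a,1^{b+1})$ of size $n$. Enumerating the shapes $\lambda\vdash n$ possessing a hook sub-shape of size $n-1$---either $\lambda$ itself is a hook $(n-k,1^k)$, or $\lambda$ is a near-hook $(n-k,2,1^{k-2})$ whose only hook sub-shape comes from removing the cell at $(2,2)$---and counting hook sub-shapes in each case, we find that every coefficient of $\Q(C_n\LL_n)$ is even. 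Dividing by $2$ simultaneously confirms the fine-set property and produces the claimed Schur expansion.

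The main obstacle is the multiplicity-$2$ argument; once the length-$(n-1)$ prefix observation is in place, the rest of the argument reduces to routine branching-rule bookkeeping for hooks.
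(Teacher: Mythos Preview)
Your proof is correct and follows essentially the same strategy as the paper: write $\A_n=\{C_n\LL_n\}$, use Corollary~\ref{cor:vertical} on the decomposition $\LL_n=\bigsqcup_k D_{n,[k]}^{-1}$ to identify $\Q(C_n\LL_n)$ with $\ch\bigl(\sum_k \chi^{(n-k,1^k)}\downarrow_{\S_{n-1}}\uparrow^{\S_n}\bigr)$, show every element has multiplicity~$2$, and then halve. Your multiplicity argument is phrased from the $\A_n$ side (for $\pi\in\A_n$, exactly two rotations $c^j\pi$ lie in $\LL_n$, detected by the last entry being $1$ or $n$), whereas the paper phrases it from the $\LL_n$ side (for $\sigma\in\LL_n$, exactly two rotations $c^{\pm1}\sigma$ remain in $\LL_n$, detected by whether $\sigma(n)=n$ or $\sigma(n)=1$); these are the same count viewed from opposite ends, and your prefix verification is a welcome extra detail.
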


\begin{proof}
By Corollary~\ref{cor:vertical} together with Lemma~\ref{lem:QLL},
$C_n\LL_n$ is a fine multiset for the $\S_n$-character
\[
\sum\limits_{k=0}^{n-1}\chi^{(n-k,1^k)}\downarrow_{\S_{n-1}}\uparrow^{\S_n}
=\left(2\sum\limits_{k=0}^{n-2}\chi^{(n-k-1,1^k)}\right)\uparrow^{\S_n}
=2(\chi^{(n)}+\chi^{(1^n)})+2\sum\limits_{k=2}^{n-2}\chi^{(n-k,2,1^{k-2})}
+4\sum\limits_{k=1}^{n-2}\chi^{(n-k,1^k)}.
\]
To show that the underlying $\A_n=\{C_n \LL_n\}$ set is fine, we count multiplicities of permutations in the multiset.
For every $\pi\in\LL_n$, exactly two elements of the orbit
$C_n\pi$ belong to $\LL_n$, namely $\pi$ and either $c\pi$ or
$c^{-1}\pi$, depending on whether $\pi(n)=n$ or $\pi(n)=1$,
respectively. Thus, every element in $C_n\LL_n$ appears with
multiplicity two, and so
\[
\Q(\A_n)=\Q(\{C_n\LL_n\})=\frac{1}{2}\Q(C_n\LL_n)=s_{n}+s_{1^n}+\sum\limits_{k=2}^{n-2}s_{n-k,2,1^{k-2}}+2\sum\limits_{k=1}^{n-2}s_{n-k,1^k}.\qedhere
\]
\end{proof}

\begin{remark}\label{rem:arc} The set $$\G_n\left(\begin{array}{cc}
           1 & 0 \\
           -1 & 0 \\
           0 & -1 \\
           0 & 1
         \end{array}\right),$$
which is one of the two grid classes for arc permutations, is not
a fine set (or even symmetric) already for $n=4$.
\end{remark}

\section{Horizontal rotations}\label{sec:horizontal}

In this section
we identify $\S_{n-1}$ with the subset of $\S_n$ consisting of those permutations $\pi$ with $\pi(n)=n$.
In particular, subsets of $\S_{n-1}$ such as $D_{n-1,J}$ and $R_{n-1,J}$ will be considered as subsets of $\S_n$ as well.

While Section~\ref{sec:vertical} discussed vertical rotations of inverse descent classes and grid classes as an application of Theorem~\ref{main}, this section deals with horizontal rotations. Two important differences are that the rotated sets are now in $\S_{n-1}$, and that the proofs do not rely on Theorem~\ref{main} but instead use bijective techniques. In Section~\ref{sec:proof_horizontal} we prove the following result, and in Section~\ref{Section7:cor} we use it to derive a significant strengthening of Proposition~\ref{prop:rotated_shuffles}, by showing that $\G_n(M_k)$ is fine.

\begin{theorem}\label{thm:horizontal1}
For every $J\subseteq [n-2]$,
$D_{n-1,J}^{-1}C_n$ is a fine set for
$S^{Z_{n-1,J}}\uparrow^{\S_n}$.
\end{theorem}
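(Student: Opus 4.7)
The plan is to show that $\Q(D_{n-1,J}^{-1}C_n)=s_1\cdot s_{Z_{n-1,J}}$. The right-hand side equals $\ch(S^{Z_{n-1,J}}\uparrow^{\S_n})$ because the characteristic map intertwines induction from $\S_{n-1}\times\S_1$ to $\S_n$ with multiplication by $s_1$, so this identity will establish the theorem.

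A preliminary observation is that the multiset product $D_{n-1,J}^{-1}C_n$ is actually a set: each $C_n$-orbit in $\S_n$ meets $\S_{n-1}$ in the single rotation placing $n$ at the last position, so the orbits $\pi C_n$ for distinct $\pi\in\S_{n-1}$ are pairwise disjoint. The same observation shows that $R_{n-1,I}^{-1}C_n=\bigsqcup_{K\subseteq I}D_{n-1,K}^{-1}C_n$ as sets in $\S_n$, where $R_{n-1,I}^{-1}:=\{\pi\in\S_{n-1}:\Des(\pi^{-1})\subseteq I\}=\bigsqcup_{K\subseteq I}D_{n-1,K}^{-1}$.

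The central technical step is to establish, for every $I\subseteq[n-2]$, the identity
\[
\Q(R_{n-1,I}^{-1}C_n)\;=\;\Q\bigl(R_{n-1,I}^{-1}\shuffle\{n\}\bigr).\qquad(\ast)
\]
By Lemma~\ref{lemma_shuffles} the right-hand side equals $s_1\cdot\Q(R_{n-1,I}^{-1})$. Granting $(\ast)$, the disjoint decomposition above together with inclusion-exclusion yields
\[
\Q(D_{n-1,J}^{-1}C_n)=\sum_{I\subseteq J}(-1)^{|J\setminus I|}\Q(R_{n-1,I}^{-1}C_n)=s_1\sum_{I\subseteq J}(-1)^{|J\setminus I|}\Q(R_{n-1,I}^{-1})=s_1\cdot\Q(D_{n-1,J}^{-1})=s_1\cdot s_{Z_{n-1,J}},
\]
using Lemma~\ref{lem:DJ} at the last step. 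This supplies the sieve and representation-theoretic halves of the proof.

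The main obstacle is the identity $(\ast)$. The two underlying sets are genuinely different: $R_{n-1,I}^{-1}\shuffle\{n\}$ coincides with the inverse descent class $R_{n,I\cup\{n-1\}}^{-1}$, a union of Knuth classes in $\S_n$, whereas $R_{n-1,I}^{-1}C_n$ is closed under right multiplication by $C_n$ but is not in general a union of Knuth classes. The naive pairing that sends $\sigma=\pi c^j$ with $\pi\in R_{n-1,I}^{-1}$ to the permutation obtained by inserting $n$ at position $n-j$ into $\pi$ itself does not preserve $\Des$, because the linear deletion of $n$ from $\sigma$ returns a cyclic rotation of $\pi$ inside $\S_{n-1}$ rather than $\pi$. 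The intended bijection must therefore simultaneously permute the representative and the position of $n$; one plausible route is a slice-by-slice comparison at each fixed position $k$ of $n$, exploiting that the descent set of an inserted permutation depends only on the descent set of the underlying length-$(n-1)$ word with position $k-1$ forgotten, together with the fact that $R_{n-1,I}^{-1}$ is a shuffle of consecutive increasing sequences.
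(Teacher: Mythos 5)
Your overall architecture coincides with the paper's: observe that the product with $C_n$ is a genuine set (this is Remark~\ref{rem:Cn}), prove the statement first for the unions $R_{n-1,I}^{-1}$ of inverse descent classes, and then recover the single class $D_{n-1,J}^{-1}$ by inclusion--exclusion, identifying the resulting alternating sum of induced trivial representations with $S^{Z_{n-1,J}}\uparrow^{\S_n}$ via Equation~\eqref{eq:altsum}. Your identity $(\ast)$ is exactly equivalent to the paper's Corollary~\ref{cor:horizontal3}: writing $I=\{j_1<\dots<j_t\}$, one has $s_1\cdot\Q(R_{n-1,I}^{-1})=s_1\,h_{j_1}h_{j_2-j_1}\cdots h_{n-1-j_t}=s_{L_{n,I}}$ by Lemma~\ref{eq_skew_induced}. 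The problem is that $(\ast)$ is precisely the nontrivial content of the theorem, and you do not prove it: you correctly diagnose that the naive insertion map fails to preserve $\Des$ (linearly deleting $n$ from $\sigma c^{-j}$ returns a cyclic rotation of $\sigma$ rather than $\sigma$), and then only gesture at a ``slice-by-slice comparison at each fixed position of $n$,'' which is neither carried out nor routine. This is a genuine gap at the heart of the argument; everything surrounding it is correct.

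For comparison, the paper closes this gap in Lemma~\ref{lemma:horizontal2} with an explicit $\Des$-preserving bijection from $R_{n-1,J}^{-1}C_n$ to $\SYT(L_{n,J})$, refined by the position of $n$: given $\pi=\sigma c^{-k}$ with $\sigma\in R_{n-1,J}^{-1}$, record the word $\sigma^{-1}(1),\dots,\sigma^{-1}(n)$ along the horizontal strips of $L_{n,J}$ to form a tableau $T^\sigma$, add $k$ to every entry modulo $n$, and re-sort each row increasingly. The key point making $\Des$ survive the re-sorting is that a consecutive pair $i,i+1$ can change relative order only if the corresponding entries of $T^\sigma$ were $n$ and $1$ lying in the same row, which is impossible because $n$ occupies a strip of its own. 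Something of this nature --- a global re-indexing of the representative $\sigma$ coupled to the shift of the position of $n$, rather than a position-by-position matching --- is what your sketch would need to supply before the rest of your argument can go through.
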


\begin{remark}\label{rem:Cn}
For every $A\subseteq \S_{n-1}$, the multiset $A C_n$ is in fact a set. For example,
$D_{n-1,J}^{-1} C_n=\{D_{n-1,J}^{-1} C_n\}$.
In particular, since $\S_{n-1} C_n=\S_n$, every $\pi\in\S_n$ can be written uniquely as $\pi=\sigma c^j$ where $\sigma\in\S_{n-1}$ and $0\le k<n$.
\end{remark}

\subsection{Horizontal rotations of inverse descent classes in $\S_{n-1}$}\label{sec:proof_horizontal}

For $J=\{j_1,\dots,j_t\}_<\subseteq [n-2]$, let $L_{n,J}$ be the skew shape of size $n$
consisting of disconnected horizontal strips of sizes $j_1, j_2-j_1,  \dots , n-1-j_t, 1$ from left to right, each strip touching the next one at a single point. Let $T$ be a SYT of shape $L_{n,J}$ and let $T_1$ the
entry in its upper-right box.

\begin{ex} The skew SYT
\[
T=\young(::::4,:135,2)
\]
has shape $L_{5,\{1\}}$ and $T_1=4$.
\end{ex}

Recall the definition of $R_{n,J}$ from Equation~\eqref{eq:defRnJ}.

\begin{lemma}\label{lemma:horizontal2}
For every $J\subseteq [n-2]$, $I\subseteq [n-1]$, and $1\le k\le n$,
\[
|\{\pi\in R_{n-1,J}^{-1}C_n:\ \pi^{-1}(n)=k, \ \Des(\pi)=I\}|=
|\{T\in \SYT(L_{n,J}):\ T_1=k, \ \Des(T)=I\}|.
\]
\end{lemma}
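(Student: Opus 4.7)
The plan is to construct an explicit descent-preserving bijection $\phi$ between the two sets. Write $J = \{j_1, \dots, j_t\}_<$ with the convention $j_0 = 0$ and $j_{t+1} = n-1$, and let $A_r = \{j_{r-1}+1, \dots, j_r\}$ for $r = 1, \dots, t+1$. Any $\pi \in R_{n-1,J}^{-1}C_n$ with $\pi^{-1}(n) = k$ decomposes uniquely as $\pi = \sigma c^{n-k}$ with $\sigma \in R_{n-1,J}^{-1}$ (so $\sigma(n) = n$); the latter condition is equivalent to saying that, for each $r$, the values in $A_r$ appear in $\sigma$ at positions that form a strictly increasing sequence. The map $\phi$ will send $\pi$ to the SYT of shape $L_{n,J}$ whose $r$-th strip (numbered from lower-left to upper-right) is filled, in increasing order, with the set $B_r := \{\pi^{-1}(v) : v \in A_r\}$ for $r \le t+1$, and whose top-right strip contains the single entry $k$. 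The strip sizes match since $|B_r| = |A_r|$.

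To invert $\phi$, given $T$ with $T_1 = k$ and strips $(B_1, \dots, B_{t+1}, \{k\})$, split each $B_r = B_r^L \sqcup B_r^R$ with $B_r^L = B_r \cap [k-1]$ and $B_r^R = B_r \cap \{k+1, \dots, n\}$, and define $\pi$ by placing the $|B_r^R|$ smallest values of $A_r$ (in increasing order) at the positions of $B_r^R$ (in increasing order), the $|B_r^L|$ largest values of $A_r$ (in increasing order) at the positions of $B_r^L$ (in increasing order), and $n$ at position $k$. A direct calculation shows that under the inverse rotation $c^{-(n-k)}$, the positions of $B_r^R$ map to $\sigma$-positions $\{1, \dots, n-k\}$ and those of $B_r^L$ to $\{n-k+1, \dots, n-1\}$; hence the ``small at $B_r^R$, large at $B_r^L$'' assignment becomes a fully increasing assignment of $A_r$ in $\sigma := \pi c^{-(n-k)}$, verifying that $\sigma \in R_{n-1,J}^{-1}$ and hence $\pi \in R_{n-1,J}^{-1}C_n$. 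The two maps are mutually inverse by construction.

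Finally, to verify that $\phi$ preserves $\Des$, observe that for a SYT of shape $L_{n,J}$, $i \in \Des(T)$ iff the strip containing $i+1$ is strictly to the left of the strip containing $i$. I distinguish three cases depending on whether $\pi(i)$ or $\pi(i+1)$ equals $n$. (a) When $\pi(i) \in A_r$ and $\pi(i+1) \in A_s$ with neither equal to $n$, every element of $A_r$ exceeds every element of $A_s$ iff $r > s$; so $\pi(i) > \pi(i+1) \iff r > s \iff i \in \Des(T)$ (and when $r = s$, the split rule forces $\pi(i) < \pi(i+1)$, matching the non-descent in $T$, since consecutive positions $i,i+1$ cannot straddle position $k$ within a single strip). (b) If $\pi(i) = n$ (so $i = k < n$), then $i \in \Des(\pi)$ automatically, and $i \in \Des(T)$ because the top-right strip is to the right of all others. (c) If $\pi(i+1) = n$ (so $i = k-1 \ge 1$), neither a descent of $\pi$ nor of $T$ occurs. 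The main obstacle I foresee is getting the inverse map right: a naive assignment of $A_r$-values to the positions in $B_r$ in increasing order would not produce an element of $R_{n-1,J}^{-1}C_n$, because the cyclic shift by $n-k$ destroys the linear increasing order, so the ``split at $k$'' rule derived from undoing the rotation is essential.
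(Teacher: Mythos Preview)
Your bijection is exactly the one the paper constructs, just described in different language: the paper writes the map as ``form $T^\sigma$ with entries $\sigma^{-1}(1),\dots,\sigma^{-1}(n)$ left to right, add $k$ modulo $n$, then sort each row,'' which produces precisely your tableau with $r$-th strip $B_r=\{\pi^{-1}(v):v\in A_r\}$ and top box $k$. Your descent verification by cases and your explicit description of the inverse via the $B_r^L/B_r^R$ split are correct and amount to the same observation the paper makes (that the only obstruction to sorting preserving relative order would require the entries $1$ and $n$ of $T^\sigma$ to share a row, which is impossible since $n$ sits alone).

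One small presentational point: in case~(a) with $r=s$ you invoke ``the split rule'' to conclude $\pi(i)<\pi(i+1)$, but that rule was stated as part of the inverse construction; you should note (as you essentially do in the paragraph on the inverse) that this rule is forced by the condition $\sigma\in R_{n-1,J}^{-1}$, so it holds for every $\pi$ in the domain, not just for images of the inverse map. With that clarified, the argument is complete and matches the paper's approach.
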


\begin{proof}
We will construct a $\Des$-preserving bijection from $R_{n-1,J}^{-1}C_n$ to $\SYT(L_{n,J})$,
under which the position of $n$ in the permutation becomes $T_1$
in the SYT. In this proof, addition and subtraction will be done
modulo $n$, so that the entries of the tableaux are always
between $1$ and $n$.

For each $\sigma\in R_{n-1,J}^{-1}\subset \S_n$, let $T^\sigma$ be
the SYT of shape $L_{n,J}$ whose entries from left to right are
$\sigma^{-1}(1),\sigma^{-1}(2),\dots,\sigma^{-1}(n)=n$. Let
$T^\sigma+k$ be the tableau obtained from $T^\sigma$ by adding $k$
to each cell (and reducing modulo $n$ if necessary).

Given $\pi=\sigma c^{-k}\in R_{n-1,J}^{-1}C_n$, where $0\le k< n$,
let $T^\pi\in \SYT(L_{n,J})$ be obtained from $T^\sigma+k$ by
rearranging the entries in each row in increasing order, so that
it becomes a SYT.

To show that the map $\pi\mapsto T^\pi$ is a bijection, note that
given $T\in\SYT(L_{n,J})$ one can recover $\pi$ such that $T^\pi=T$ as follows.
Letting $k=T_1$, subtract $k$ from each entry of $T$ and then
rearrange the entries in each row in increasing order. Let
$\sigma$ be the inverse of the permutation obtained by reading the
resulting SYT from left to right, and let $\pi=\sigma c^{-k}$. Note that $\sigma\in R_{n-1,J}^{-1}$.

It is immediate from the construction of $T^\pi$ that its rightmost entry equals the position of $n$ in $\pi$,
namely, $T^\pi_1=\pi^{-1}(n)$.

It remains to prove that $\Des(T^\pi)=\Des(\pi)$. Note that
$i\in\Des(\pi)$ if and only if $i+1$ appears to the left of $i$ in
the sequence $\pi^{-1}(1),\dots,\pi^{-1}(n)$. If $\pi=\sigma
c^{-k}$, then $\pi^{-1}=c^k \sigma^{-1}$, so
$\pi^{-1}(1),\dots,\pi^{-1}(n)$ is the list of entries of
$T^\sigma+k$ from left to right. Since $i\in\Des(T^\pi)$ if and
only if $i+1$ appears to the left of $i$ in $T^\pi$, it only
remains to show that $i+1$ appears to the left of $i$ in
$T^\sigma+k$ if and only if the same is true in $T^\pi$. The only
way for this to fail would be if $i+1$ and $i$ are in the same row
of $T^\sigma+k$, with $i+1$ to the left of $i$, so that
rearranging the row in increasing order changes their relative
position. Since the rows of $T^\sigma$ are increasing, this is
only possible if the corresponding entries in $T^\sigma$, which are $i+1-k$ and $i-k$ (mod $n$), equal $1$ and $n$, respectively. But this contradicts the fact that $n$ is in a
row by itself in $T^\sigma$.
\end{proof}

\begin{ex}
Let $n=5$, $J=\{1\}\subseteq[3]$ and $\sigma=2314\in R_{\{1\}, 4}^{-1}\subset\S_5$, so $\sigma^{-1}=3124$. Then
\[
\sigma C_5=
\{\sigma c^0,\sigma c^{-1},\sigma c^{-2},\sigma c^{-3},\sigma c^{-4}\}
=\{23145,52314,45231,14523,31452\}.
\]
The descent sets  of these horizontal rotations are $\{2\},\ \{1,3\},\ \{2,4\},\ \{3\},\
\{1,4\}$, respectively.
 The SYT corresponding to these permutations are
\[
T^{23145}=\young(::::5,:124,3),\ T^{52314}=\young(::::1,:235,4),\
T^{45231}=\young(::::2,:134,5),
\]
\[
\ T^{14523}=\young(::::3,:245,1),\ T^{31452}=\young(::::4,:135,2),
\]
whose descent sets are
 $\{2\},\ \{1,3\},\ \{2,4\},\ \{3\},\
\{1,4\}$, respectively.
\end{ex}

For $J=\{j_1,\dots,j_t\}_<\subseteq [n-2]$, recall the notation $\S_{\bar J}:=\S_{j_1}\times
\S_{j_2-j_1}\times \cdots \times \S_{n-1-j_t}\subseteq\S_{n-1}$.

\begin{corollary}\label{cor:horizontal3}
For every $J\subseteq [n-2]$,
$R_{n-1,J}^{-1}C_n$ is a fine set for $1_{S_{\bar
J}}\uparrow^{\S_n}$.
\end{corollary}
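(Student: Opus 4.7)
The plan is to derive the corollary directly from Lemma~\ref{lemma:horizontal2} together with a Frobenius-character calculation for the skew shape $L_{n,J}$. First, I observe that by Remark~\ref{rem:Cn}, the product $R_{n-1,J}^{-1}C_n$ is a genuine set (no multiplicities), because every element of $\S_n$ has a unique expression as $\sigma c^j$ with $\sigma\in\S_{n-1}$ and $0\le j<n$. So there is no need to worry about multiplicities when computing $\Q(R_{n-1,J}^{-1}C_n)$.

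Next, I sum the identity of Lemma~\ref{lemma:horizontal2} over the position $k$ of $n$. This yields, for every $I\subseteq[n-1]$,
\[
|\{\pi\in R_{n-1,J}^{-1}C_n:\Des(\pi)=I\}|
= |\{T\in\SYT(L_{n,J}):\Des(T)=I\}|,
\]
i.e.\ $\Des$ is equidistributed on $R_{n-1,J}^{-1}C_n$ and on $\SYT(L_{n,J})$. Hence
\[
\Q(R_{n-1,J}^{-1}C_n)=\Q(\SYT(L_{n,J}))=s_{L_{n,J}},
\]
the second equality being Proposition~\ref{G1}.

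The final step is to recognize $s_{L_{n,J}}$ as the Frobenius image of $1_{\S_{\bar J}}\uparrow^{\S_n}$. Recall that $L_{n,J}$ is, by construction, a disconnected skew shape whose components are single-row partitions of sizes $j_1,\,j_2-j_1,\,\dots,\,n-1-j_t,\,1$. Applying Lemma~\ref{eq_skew_induced} to this skew shape gives
\[
s_{L_{n,J}}=s_{(j_1)}\,s_{(j_2-j_1)}\cdots s_{(n-1-j_t)}\,s_{(1)}
= h_{j_1}h_{j_2-j_1}\cdots h_{n-1-j_t}h_1,
\]
and correspondingly
\[
\chi^{L_{n,J}}=\bigl(\chi^{(j_1)}\otimes\chi^{(j_2-j_1)}\otimes\cdots\otimes\chi^{(n-1-j_t)}\otimes\chi^{(1)}\bigr)\!\uparrow_{\S_{\bar J}\times\S_1}^{\S_n}\!\!= 1_{\S_{\bar J}\times\S_1}\!\uparrow^{\S_n}\!\!= 1_{\S_{\bar J}}\!\uparrow^{\S_n},
\]
since each $\chi^{(m)}$ is the trivial character of $\S_m$, and the additional $\S_1$ factor is trivial. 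Combining the displayed equalities shows $\Q(R_{n-1,J}^{-1}C_n)=\ch\bigl(1_{\S_{\bar J}}\uparrow^{\S_n}\bigr)$, which is exactly the statement that $R_{n-1,J}^{-1}C_n$ is fine for the induced trivial representation $1_{\S_{\bar J}}\uparrow^{\S_n}$.

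There is essentially no hard part remaining at this point: the bijective content has been absorbed into Lemma~\ref{lemma:horizontal2}, and the representation-theoretic content is a one-line application of Lemma~\ref{eq_skew_induced}. The only point that merits a sentence of explanation is the identification of the shape $L_{n,J}$ as the disjoint union of rows matching the block sizes of $\S_{\bar J}$ (with one extra singleton row for the appended $\S_1$), so that the induced trivial character is read off as a product of complete homogeneous symmetric functions.
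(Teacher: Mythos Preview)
Your proof is correct and follows essentially the same route as the paper's: sum Lemma~\ref{lemma:horizontal2} over $k$ to get $\Q(R_{n-1,J}^{-1}C_n)=\Q(\SYT(L_{n,J}))=s_{L_{n,J}}$ via Proposition~\ref{G1}, then apply Lemma~\ref{eq_skew_induced} to identify $s_{L_{n,J}}$ with $\ch\bigl(1_{\S_{\bar J}}\uparrow^{\S_n}\bigr)$. The paper compresses these steps into two sentences, while you spell them out (including the observation from Remark~\ref{rem:Cn} that the product is a set and the handling of the extra $\S_1$ factor), but the content is identical.
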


\begin{proof}
By Lemma~\ref{lemma:horizontal2}, the distribution of $\Des$ over $R_{n-1,J}^{-1}C_n$ is the same as over $\SYT(L_{n,J})$. By Lemma~\ref{eq_skew_induced} together with
Proposition~\ref{G1}, the set $\SYT(L_{n,J})$ is fine
for the induced character $1_{\S_{\bar J}}\uparrow^{\S_n}$, and thus so is $R_{n-1,J}^{-1}C_n$.
\end{proof}

Now we are ready to prove Theorem~\ref{thm:horizontal1}.

\begin{proof}[Proof of Theorem~\ref{thm:horizontal1}]
Since $R_{n,J}^{-1}=\bigsqcup_{I\subseteq J} D_{n,I}^{-1}$, we have that
$$\Q(R_{n,J}^{-1}C_n)=\sum_{I\subseteq J} \Q(D_{n,I}^{-1}C_n).$$
By the inclusion-exclusion principle,
\[
\Q(D_{n,J}^{-1}C_n)=\sum\limits_{I\subseteq J} (-1)^{|J\setminus
I|} \Q( R_{n,I}^{-1} C_n).
\]
Hence, by Corollary~\ref{cor:horizontal3}, $\Q(D_{n,J}^{-1}C_n)$
is symmetric, and in fact it
is the Frobenius image of the alternating sum of representations
\[
\sum\limits_{I\subseteq J} (-1)^{|J\setminus I|} 1_{\S_{\bar
J}}\uparrow^{\S_n}=\sum\limits_{I\subseteq J} (-1)^{|J\setminus
I|} 1_{\S_{\bar J}}\uparrow^{\S_{n-1}}\uparrow^{\S_n}
=\left(\sum\limits_{I\subseteq J} (-1)^{|J\setminus I|}
1_{\S_{\bar
J}}\uparrow^{\S_{n-1}}\right)\uparrow^{\S_n}=S^{Z_{n-1,J}}\uparrow^{\S_n},
\]
where the last equality uses Equation~\eqref{eq:altsum}. This completes the proof.
\end{proof}

\subsection{Horizontal rotations of grids}\label{Section7:cor}

In this subsection we prove that $\G_n(M_k)$ is a fine set by interpreting the corresponding grid as a horizontal rotation of a grid in $\S_{n-1}$.

\begin{lemma}\label{rotated_shuffles2_lemma2}
For every $k\ge1$,
\[
\G_n(M_k)= \G^{+^k}_{n-1}\, C_n.
\]
\end{lemma}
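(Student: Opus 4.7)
The plan is to prove the equality $\G_n(M_k)=\G^{+^k}_{n-1}\,C_n$ by showing that, under the unique factorization of a permutation in $\S_n$ as an element of $\S_{n-1}$ (fixing $n$) times a power of $c$, membership in the left-hand side translates cleanly into the descent condition characterizing $\G^{+^k}_{n-1}$.

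First I would invoke Lemma~\ref{lem:GMkcdes} to replace the grid class on the left by the combinatorial condition
\[
\G_n(M_k)=\{\pi\in\S_n:\cdes(\pi^{-1})\le k\},
\]
and Observation~\ref{obs:Gk} to rewrite
\[
\G^{+^k}_{n-1}=\{\sigma\in\S_{n-1}:\des(\sigma^{-1})\le k-1\},
\]
where $\S_{n-1}$ is identified with the stabilizer of $n$ in $\S_n$. Then I would use Remark~\ref{rem:Cn}: every $\pi\in\S_n$ has a unique factorization $\pi=\sigma c^j$ with $\sigma\in\S_{n-1}$ (so $\sigma(n)=n$) and $0\le j<n$, so that $\G^{+^k}_{n-1}\,C_n$ consists precisely of those $\pi$ whose ``$\S_{n-1}$-part'' $\sigma$ lies in $\G^{+^k}_{n-1}$.

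The key computation is to connect $\cdes(\pi^{-1})$ with $\des(\sigma^{-1})$. Using $\pi^{-1}=c^{-j}\sigma^{-1}$ and Lemma~\ref{lem:cdes_rotations} (which asserts that both left and right multiplication by powers of $c$ preserve $\cdes$), we get $\cdes(\pi^{-1})=\cdes(\sigma^{-1})$. Since $\sigma$ fixes $n$, so does $\sigma^{-1}$, so $\sigma^{-1}(n)=n>\sigma^{-1}(1)$ (assuming $n\ge 2$; the case $n=1$ is trivial), which means $n\in\cDes(\sigma^{-1})$, hence
\[
\cdes(\sigma^{-1})=\des(\sigma^{-1})+1.
\]
Putting everything together,
\[
\pi\in\G_n(M_k)\ \Longleftrightarrow\ \cdes(\pi^{-1})\le k\ \Longleftrightarrow\ \des(\sigma^{-1})\le k-1\ \Longleftrightarrow\ \sigma\in\G^{+^k}_{n-1}\ \Longleftrightarrow\ \pi\in\G^{+^k}_{n-1}\,C_n,
\]
which gives the desired equality.

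There is no substantive obstacle here; the proof is essentially a bookkeeping exercise once the right characterizations are in hand. The only point requiring a moment of care is that the decomposition $\pi=\sigma c^j$ with $\sigma\in\S_{n-1}$ really is unique (Remark~\ref{rem:Cn}) so that the product $\G^{+^k}_{n-1}\,C_n$ is a genuine set with no multiplicities, and that cyclic descents are invariant under multiplication by $c$ on either side, which is exactly what Lemma~\ref{lem:cdes_rotations} provides.
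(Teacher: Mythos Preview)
Your proof is correct and follows essentially the same approach as the paper: both use the unique factorization $\pi=\sigma c^j$ from Remark~\ref{rem:Cn}, invoke Lemma~\ref{lem:GMkcdes} and Observation~\ref{obs:Gk} to translate membership into the conditions $\cdes(\pi^{-1})\le k$ and $\des(\sigma^{-1})\le k-1$, and then use Lemma~\ref{lem:cdes_rotations} together with $\sigma^{-1}(n)=n$ to deduce $\cdes(\pi^{-1})=\cdes(\sigma^{-1})=\des(\sigma^{-1})+1$. Your write-up is slightly more explicit about why $n\in\cDes(\sigma^{-1})$, but the argument is otherwise identical.
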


\begin{proof}
Recall from Remark~\ref{rem:Cn} that every $\pi\in\S_n$ can be written uniquely as $\pi=\sigma c^j$ for some $\sigma\in S_{n-1}\subset\S_n$ and $0\le j<n$.
We will show that $\pi\in\G_n(M_k)$ if and only if $\sigma\in \G^{+^k}_{n-1}$.
By Lemma~\ref{lem:GMkcdes} and Observation~\ref{obs:Gk}, this is equivalent to showing that $\cdes(\pi^{-1})\le k$ if and only if $\des(\sigma^{-1})\le k-1$.
Viewing $\sigma^{-1}$ as an element of $\S_{n}$ when considering the statistic $\cdes$, Lemma~\ref{lem:cdes_rotations} gives
$\des(\sigma^{-1})=\cdes(\sigma^{-1})-1=\cdes(\pi^{-1})-1$, and the result follows.
\end{proof}

Now we can prove the following consequences of  Theorem~\ref{thm:horizontal1}.

\begin{corollary}\label{cor:rotated_shuffles2}
For every $k\ge1$, the set $\G_n(M_k)$ is fine; namely,
$\Q(\G_n(M_k))$ is symmetric and Schur-positive.
\end{corollary}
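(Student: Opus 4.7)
The plan is to reduce the corollary to Theorem~\ref{thm:horizontal1} via Lemma~\ref{rotated_shuffles2_lemma2} and the decomposition of the one-column grid class $\G_{n-1}^{+^k}$ into inverse descent classes. By Observation~\ref{obs:Gk} applied inside $\S_{n-1}$, I can write
\[
\G_{n-1}^{+^k}=\bigsqcup_{J\subseteq[n-2],\,|J|\le k-1} D_{n-1,J}^{-1},
\]
a disjoint union. Lemma~\ref{rotated_shuffles2_lemma2} then gives $\G_n(M_k)=\G_{n-1}^{+^k}C_n$, so the plan is to push the previous decomposition through right multiplication by $C_n$ and apply the horizontal-rotation theorem to each piece.

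The next step is to verify that right multiplication by $C_n$ preserves disjointness. This is immediate from Remark~\ref{rem:Cn}: every permutation in $\S_n$ admits a unique decomposition as $\sigma c^j$ with $\sigma\in\S_{n-1}$ and $0\le j<n$, so $AC_n\cap BC_n=(A\cap B)C_n$ for any $A,B\subseteq\S_{n-1}$, and in particular each $AC_n$ is an honest set rather than a proper multiset. Consequently,
\[
\G_n(M_k)=\bigsqcup_{J\subseteq[n-2],\,|J|\le k-1} D_{n-1,J}^{-1}C_n.
\]

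Finally, Theorem~\ref{thm:horizontal1} asserts that each summand $D_{n-1,J}^{-1}C_n$ is a fine set, being the Frobenius image of the non-virtual representation $S^{Z_{n-1,J}}\uparrow^{\S_n}$. Applying Observation~\ref{union} to the disjoint union above will yield that $\Q(\G_n(M_k))$ is symmetric and Schur-positive, with the explicit character-theoretic formula
\[
\Q(\G_n(M_k))=\sum_{J\subseteq[n-2],\,|J|\le k-1} \ch\bigl(S^{Z_{n-1,J}}\uparrow^{\S_n}\bigr).
\]

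I do not anticipate a substantive obstacle here, since all the difficult work has already been done in Theorem~\ref{thm:horizontal1} (via the bijection of Lemma~\ref{lemma:horizontal2} and inclusion-exclusion) and in Lemma~\ref{rotated_shuffles2_lemma2}. The one subtlety worth double-checking is precisely the disjointness of the sets $D_{n-1,J}^{-1}C_n$ inside $\S_n$, which follows cleanly from the uniqueness of the coset-like decomposition $\pi=\sigma c^j$; without this observation the argument would only give symmetry and Schur-positivity of $\Q(\G_n(M_k))$ as a multiset, not as a set.
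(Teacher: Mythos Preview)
Your proposal is correct and follows essentially the same argument as the paper: decompose $\G_n(M_k)=\G_{n-1}^{+^k}C_n$ via Lemma~\ref{rotated_shuffles2_lemma2}, use Observation~\ref{obs:Gk} and Remark~\ref{rem:Cn} to write this as the disjoint union $\bigsqcup_{|J|\le k-1} D_{n-1,J}^{-1}C_n$, and apply Theorem~\ref{thm:horizontal1} to each piece. Your version is simply more explicit about the disjointness verification and adds the character-theoretic summation, but the route is the same.
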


\begin{proof}
By Lemma~\ref{rotated_shuffles2_lemma2} together with Observation~\ref{obs:Gk} and Remark~\ref{rem:Cn},
$$\G_n(M_k)= \bigsqcup_{\substack{J\subseteq[n-2] \\ |J|\le k-1}} D_{n-1,J}^{-1}C_n.$$
Since $D_{n-1,J}^{-1}C_n$ is a fine set for every $J$ by Theorem~\ref{thm:horizontal1}, it follows that $\G_n(M_k)$ is a fine set as well.
\end{proof}

\begin{corollary}\label{cor:cyc_fine}
For every $k\ge1$, the set
\[
\{\pi\in \S_n:\ \cdes(\pi^{-1})=k\}
\] is fine for the multiplicity-free sum of induced representations
$S^{Z}\uparrow^{\S_n}$ over all ribbons $Z$ of height $k$ and size $n-1$.
\end{corollary}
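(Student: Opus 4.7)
The plan is to derive this from the decomposition of $\G_n(M_k)$ already established, together with Theorem~\ref{thm:horizontal1} and Observation~\ref{union}. The key identification is that, by Lemma~\ref{lem:GMkcdes},
\[
\{\pi\in\S_n:\ \cdes(\pi^{-1})=k\}=\G_n(M_k)\setminus \G_n(M_{k-1}),
\]
so it suffices to understand how the layers of the filtration $\G_n(M_1)\subset\G_n(M_2)\subset\cdots$ decompose.

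First I would combine Lemma~\ref{rotated_shuffles2_lemma2} with the characterization $\G^{+^k}_{n-1}=\bigsqcup_{J\subseteq[n-2],\,|J|\le k-1}D_{n-1,J}^{-1}$ from Observation~\ref{obs:Gk}. Since every $\pi\in\S_n$ has a \emph{unique} factorization $\pi=\sigma c^j$ with $\sigma\in\S_{n-1}$ and $0\le j<n$ (see Remark~\ref{rem:Cn}), the descent set of $\sigma$ is determined by $\pi$, and the union
\[
\G_n(M_k)=\bigsqcup_{\substack{J\subseteq[n-2]\\ |J|\le k-1}} D_{n-1,J}^{-1}C_n
\]
is genuinely disjoint. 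Taking the difference between the $k$-th and $(k-1)$-st level yields
\[
\{\pi\in\S_n:\ \cdes(\pi^{-1})=k\}=\bigsqcup_{\substack{J\subseteq[n-2]\\ |J|=k-1}} D_{n-1,J}^{-1}C_n.
\]

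Next I would apply Theorem~\ref{thm:horizontal1}: for each $J\subseteq[n-2]$ the set $D_{n-1,J}^{-1}C_n$ is fine for the induced representation $S^{Z_{n-1,J}}\uparrow^{\S_n}$. By Observation~\ref{union}, the disjoint union over all $J$ with $|J|=k-1$ is therefore fine for
\[
\sum_{\substack{J\subseteq[n-2]\\ |J|=k-1}} S^{Z_{n-1,J}}\uparrow^{\S_n}.
\]

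Finally, I would match this sum with the description in the statement. By Definition~\ref{zigzag_subset}, the ribbon $Z_{n-1,J}$ has size $n-1$ and exactly $|J|+1$ rows, so the condition $|J|=k-1$ on $J\subseteq[n-2]$ parametrizes precisely the ribbons $Z$ of size $n-1$ and height $k$, each appearing once. This identifies the character with the multiplicity-free sum $\sum_Z S^{Z}\uparrow^{\S_n}$ over all such ribbons, as claimed. There is no real obstacle here beyond bookkeeping: the substance is carried by Theorem~\ref{thm:horizontal1} and Lemma~\ref{rotated_shuffles2_lemma2}; the only point requiring a moment of care is verifying the disjointness of the union (via the unique decomposition $\pi=\sigma c^j$) and the correct height-to-$|J|$ correspondence for ribbons.
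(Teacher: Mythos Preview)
Your proof is correct and follows essentially the same route as the paper: identify $\{\pi:\cdes(\pi^{-1})=k\}=\G_n(M_k)\setminus\G_n(M_{k-1})$ via Lemma~\ref{lem:GMkcdes}, decompose this as $\bigsqcup_{|J|=k-1}D_{n-1,J}^{-1}C_n$ using Lemma~\ref{rotated_shuffles2_lemma2}, Observation~\ref{obs:Gk}, and Remark~\ref{rem:Cn}, then apply Theorem~\ref{thm:horizontal1} termwise. Your additional remark matching $|J|=k-1$ to ribbons of height $k$ makes explicit a point the paper leaves to the reader.
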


\begin{proof}
By Lemmas~\ref{lem:GMkcdes} and~\ref{rotated_shuffles2_lemma2}, together with Remark~\ref{rem:Cn} and Observation~\ref{obs:Gk},
\begin{multline*}\{\pi\in \S_n:\ \cdes(\pi^{-1})=k\}=\G_n(M_k)\setminus\G_n(M_{k-1})=\G^{+^k}_{n-1}C_n\setminus\G^{+^{k-1}}_{n-1}C_n
\\ =\{\pi\in\S_{n-1}:\des(\pi^{-1})=k-1\}\,C_n
=\bigsqcup_{\substack{J\subseteq[n-2] \\ |J|= k-1}} D_{n-1,J}^{-1}C_n.
\end{multline*}
By Theorem~\ref{thm:horizontal1}, $D_{n-1,J}^{-1}C_n$ is a fine set for $S^{Z_{n-1,J}}\uparrow^{\S_n}$, completing the proof.
\end{proof}

\subsection{Applications}\label{sec:horizontal_enumerative}

This subsection discusses two more consequences of Theorem~\ref{thm:horizontal1}.
The first one involves arc permutations and left-unimodal permutations in $\S_n$.
It is easy to see that $\A_n=C_n\LL_{n-1}$. Indeed, as in Remark~\ref{rem:Cn} but with $C_n$ multiplying from the left instead of from the right, every $\pi\in\S_n$ can be written uniquely as $\pi=c^k\sigma$ for $0\le k<n$ and some $\sigma\in\S_{n-1}$. In such an expression, the condition of every prefix of $\pi$ being an interval in $\bbz_n$ is equivalent to every prefix of $\sigma$ being an interval in $\bbz$. Thus, $\pi\in\A_n$ if and only if $\sigma\in\LL_{n-1}$.

On the other hand, $\A_n\ne \LL_{n-1} C_n$ for $n\ge4$. For example, $1342\notin\A_4$ is the product of $213\in\LL_3$ with $(1,2,3,4)\in C_4$.
Note also that, by Observation~\ref{obs:arc_revisited}, $\bigcup_n \A_n$ is a union of grid classes, but this is not the case for the set $\bigcup_n {\mathcal L}_{n-1}C_n$, since it is not closed under pattern containment.

Nevertheless, the following equidistribution phenomenon holds.

\begin{corollary}\label{cor:LC=CL} For every positive integer $n$,
$$\Q({\mathcal L}_{n-1}C_n)=\Q(\A_n).
$$
In particular, ${\mathcal L}_{n-1}C_n$ is set-fine.
\end{corollary}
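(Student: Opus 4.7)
My plan is to reduce the claim to Theorem~\ref{thm:horizontal1} via the natural decomposition of $\LL_{n-1}$ into inverse descent classes, and then finish with a routine Pieri computation compared against the known expansion of $\Q(\A_n)$.

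First, I would recall from Equation~\eqref{Eq:LL} that $\LL_{n-1} = \bigsqcup_{k=0}^{n-2} D_{n-1,[k]}^{-1}$. By the unique factorization $\pi = \sigma c^j$ with $\sigma \in \S_{n-1}$ noted in Remark~\ref{rem:Cn}, right-multiplying disjoint subsets of $\S_{n-1}$ by $C_n$ produces pairwise disjoint subsets of $\S_n$, so
$$\LL_{n-1} C_n = \bigsqcup_{k=0}^{n-2} D_{n-1,[k]}^{-1} C_n$$
as sets.

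Next, I would observe that the ribbon $Z_{n-1,[k]}$ associated with the descent set $[k]$ is exactly the hook $(n-1-k,1^k)$. Applying Theorem~\ref{thm:horizontal1} to each summand, $D_{n-1,[k]}^{-1} C_n$ is fine for $S^{(n-1-k,1^k)}\uparrow^{\S_n}$, and by Equation~\eqref{Pierri_1} its Frobenius image is $s_1 \cdot s_{(n-1-k,1^k)}$. Summing over $k$,
$$\Q(\LL_{n-1} C_n) = \sum_{k=0}^{n-2} s_1 \cdot s_{(n-1-k,1^k)}.$$

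Finally, I would expand each Pieri product. Each hook $(n-1-k,1^k)$ has at most three addable boxes, producing $(n-k,1^k)$, $(n-1-k,1^{k+1})$, and, when $1 \le k \le n-3$, also $(n-1-k,2,1^{k-1})$. Collecting terms across $k$, the shapes $s_n$ and $s_{1^n}$ each occur once, each hook $s_{n-j,1^j}$ with $1 \le j \le n-2$ occurs exactly twice (once from the $k=j$ addition $(n-k,1^k)$ and once from the $k=j-1$ addition $(n-1-k,1^{k+1})$), and each $s_{n-j,2,1^{j-2}}$ with $2 \le j \le n-2$ occurs once. This matches term for term with the expansion of $\Q(\A_n)$ in Equation~\eqref{eq_arc_schur}, proving the equality and, since the right-hand side is manifestly Schur-positive, the set-fineness of $\LL_{n-1} C_n$.

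There is no real obstacle beyond Theorem~\ref{thm:horizontal1}, which provides the substantive input: the key insight is simply that decomposing $\LL_{n-1}$ via Equation~\eqref{Eq:LL} produces exactly the hook-shaped inverse descent classes to which Theorem~\ref{thm:horizontal1} directly applies, after which the Pieri rule does the bookkeeping.
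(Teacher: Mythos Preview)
Your proposal is correct and follows essentially the same route as the paper: decompose $\LL_{n-1}$ via Equation~\eqref{Eq:LL}, apply Theorem~\ref{thm:horizontal1} to each hook-shaped inverse descent class, expand the resulting induced characters by Pieri, and match against the known formula~\eqref{eq_arc_schur} for $\Q(\A_n)$. The paper's proof is just a terser version of the same argument, citing Lemma~\ref{lem:QLL} and Proposition~\ref{prop:rotated_leftunimodal} in place of your explicit Pieri bookkeeping.
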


\begin{proof}
By Equation (\ref{Eq:LL}) together with Lemma~\ref{lem:QLL} and
Theorem~\ref{thm:horizontal1}, ${\mathcal L}_{n-1}C_n$ is fine and
\[
\Q({\mathcal L}_{n-1}C_n)=\ch
\left(\sum\limits_{k=0}^{n-2}\chi^{(n-1-k,1^k)}\uparrow^{\S_n}\right)=s_{n}+s_{1^n}+\sum\limits_{k=2}^{n-2}s_{n-k,2,1^{k-2}}+2\sum\limits_{k=1}^{n-2}s_{n-k,1^k}=\Q(\A_n).
\]
The last equality follows from
Proposition~\ref{prop:rotated_leftunimodal}.
\end{proof}

\begin{corollary}\label{cor:horizontally_rotated_colayered}
For every $J\subseteq [n-2]$, if $A\subset \S_{n-1}$ is such that $\Q(A)=\Q(D_{n-1,J}^{-1})$, then $AC_n$ is a fine set for
$S^{Z_{n-1,J}}\uparrow^{\S_n}$.

In particular, for every $2\le k\le n-1$,
\[
\Q((\LLL^k_{n-1} \setminus \LLL^{k-1}_{n-1})\,C_n)=s_{n-k,1^k}+s_{n-k+1,1^{k-1}}+s_{n-k,2,1^{k-2}}.
\]
\end{corollary}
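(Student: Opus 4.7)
The plan is first to establish the key observation that $\Q(AC_n)$ depends on $A\subseteq\S_{n-1}$ only through $\Q(A)$, and then to derive the general assertion as an immediate consequence of Theorem~\ref{thm:horizontal1}, and the ``in particular'' statement from Proposition~\ref{prop:k-colayered_hooks} combined with the Pieri rule.

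For the key observation, I would prove that for any $\sigma\in\S_{n-1}$, embedded in $\S_n$ via $\sigma(n)=n$, the multiset $\{\Des(\sigma c^j):0\le j<n\}$ depends only on $\Des(\sigma)\subseteq[n-2]$. Since $\sigma(n)=n>\sigma(1)$, we have $\cDes(\sigma)=\Des(\sigma)\cup\{n\}$. The one-line notation of $\sigma c^j$ is the cyclic rotation $\sigma(1+j)\sigma(2+j)\cdots\sigma(n+j)$ of $\sigma(1)\sigma(2)\cdots\sigma(n)$, with indices read modulo $n$; consequently $\cDes(\sigma c^j)$ equals the cyclic shift of $\cDes(\sigma)$ by $-j$ modulo $n$, and $\Des(\sigma c^j)=\cDes(\sigma c^j)\setminus\{n\}$. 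As $j$ ranges over $\{0,1,\dots,n-1\}$, the multiset of such shifted sets is determined by $\cDes(\sigma)$, and hence by $\Des(\sigma)$. Summing $F_{n,\Des(\pi)}$ over $\pi\in AC_n$ then shows that $\Q(AC_n)$ depends only on the descent distribution of $A$, which is precisely the information encoded by $\Q(A)$. The first assertion is then immediate: if $\Q(A)=\Q(D_{n-1,J}^{-1})$, then $\Q(AC_n)=\Q(D_{n-1,J}^{-1}C_n)=\ch(S^{Z_{n-1,J}}\uparrow^{\S_n})$ by Theorem~\ref{thm:horizontal1}, so $AC_n$ is fine for the same representation.

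For the ``in particular'' part, Proposition~\ref{prop:k-colayered_hooks} applied with $n-1$ in place of $n$ gives $\Q(\LLL^k_{n-1}\setminus\LLL^{k-1}_{n-1})=s_{n-k,1^{k-1}}$. The hook $(n-k,1^{k-1})$ coincides with the ribbon $Z_{n-1,J}$ for $J=\{1,2,\dots,k-1\}\subseteq[n-2]$, so by Lemma~\ref{lem:DJ} this equals $\Q(D_{n-1,J}^{-1})$. The first part then yields that $\Q((\LLL^k_{n-1}\setminus\LLL^{k-1}_{n-1})C_n)$ is the Frobenius image of $S^{(n-k,1^{k-1})}\uparrow^{\S_n}$, and by the Pieri rule in the form of Equation~\eqref{Pierri_1} this induced character is the multiplicity-free sum of Schur functions indexed by the shapes obtained by adding a single box to $(n-k,1^{k-1})$, namely $(n-k+1,1^{k-1})$, $(n-k,2,1^{k-2})$, and $(n-k,1^k)$, with the convention that any non-partition contributes zero. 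This matches the claimed formula. The main obstacle is the bookkeeping in the key observation above: one must carefully match conventions among $\Des$ in $\S_n$, $\Des$ in $\S_{n-1}$, and $\cDes$ in $\S_n$, and track the role of position $n$ under cyclic shifts in $\bbz/n\bbz$. Although conceptually routine, it is precisely at this step that the argument cannot simply appeal to the earlier Theorem~\ref{thm:horizontal1}.
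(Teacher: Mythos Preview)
Your proof is correct and follows essentially the same approach as the paper: the key observation that the descent-set multiset of $\sigma C_n$ depends only on $\Des(\sigma)$ is exactly what the paper proves (they compute $\Des(\pi c^{-j})$ directly in terms of $\Des(\pi)$, whereas you phrase it via the cyclic shift of $\cDes$, but these are equivalent), and the deduction from Theorem~\ref{thm:horizontal1} and Proposition~\ref{prop:k-colayered_hooks} is the same. You supply more detail than the paper for the ``in particular'' part by explicitly invoking Pieri; your caveat about the non-partition term vanishing at $k=n-1$ is a valid reading of the stated formula.
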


\begin{proof}
For every $0\le j<n$ and $\pi\in \S_{n-1}$, we have that $i\in\Des(\pi c^{-j})$
if either $i-j\bmod n$ belongs to $\Des(\pi)$ or $i=j$. It follows that the distribution of $\Des$ on
$\pi C_n$ is determined by $\Des(\pi)$. Thus, if $\Q(A)=\Q(D_{n-1,J}^{-1})$, then $\Q(A
C_n)=\Q(D_{n-1,J}^{-1} C_n)$. Hence, by
Theorem~\ref{thm:horizontal1}, $AC_n$ is a fine set for
$S^{Z_{n-1,J}}\uparrow^{\S_n}$.
 The second statement follows now using Proposition~\ref{prop:k-colayered_hooks}.
\end{proof}

Some conjectured generalizations of
Corollaries~\ref{cor:LC=CL}
and~\ref{cor:horizontally_rotated_colayered} will be discussed in Section~\ref{sec:open}.

\section{Other geometric operations on grids}\label{sec:other}
Aside from vertical and horizontal rotations, there are other operations on grids that preserve Schur-positivity in some circumstances. In this section we study some of them.

\subsection{Vertical and horizontal reflections}\label{sec:reflections}

Before we state our result about reflections of grids, let us
introduce a few definitions and a lemma. For a set
$D\subseteq[n-1]$, define $n-D:=\{n-i:i\in D\}$. Let
$w_0:=n(n-1)\dots 1\in\S_n$. For $\pi\in\S_n$, we have
$w_0\pi=(n+1-\pi(1))(n+1-\pi(2))\dots(n+1-\pi(n))$, $\pi
w_0=\pi(n)\dots\pi(2)\pi(1)$, and $w_0\pi
w_0=(n+1-\pi(n))\dots(n+1-\pi(2))(n+1-\pi(1))$. In particular,
\begin{equation}\label{eq:n-Des}
\Des(w_0\pi w_0)=n-\Des(\pi).
\end{equation}

\begin{lemma}\label{m3}
Let $\BBB$ be a fine set of $\S_n$. Then the statistics $\Des$ and $n-\Des$ are equidistributed over $\BBB$.
\end{lemma}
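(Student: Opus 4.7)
The plan is to deduce the equidistribution from the symmetry of $\Q(\BBB)$ combined with Sch\"utzenberger's evacuation involution on standard Young tableaux of straight shape. Since $\BBB$ is fine, $\Q(\BBB)$ is symmetric, so I would expand it in the Schur basis as $\Q(\BBB) = \sum_{\lambda \vdash n} a_\lambda s_\lambda$ for some coefficients $a_\lambda$ (nonnegative by Schur-positivity, though only integrality is needed for the argument). By Proposition~\ref{G1}, $s_\lambda = \sum_{T \in \SYT(\lambda)} F_{n, \Des(T)}$, so substituting this into the expansion of $\Q(\BBB)$ and using the linear independence of the fundamental quasisymmetric functions $\{F_{n,D}:D\subseteq[n-1]\}$, I would extract the coefficient of $F_{n,D}$ to obtain, for every $D \subseteq [n-1]$,
\[
|\{b \in \BBB : \Des(b) = D\}| = \sum_{\lambda \vdash n} a_\lambda \, |\{T \in \SYT(\lambda) : \Des(T) = D\}|,
\]
with the left-hand side counted with multiplicity if $\BBB$ is a multiset.

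The second step is the key input: Sch\"utzenberger's evacuation map $\xi_\lambda : \SYT(\lambda) \to \SYT(\lambda)$, a classical involution satisfying the descent-reversal property $\Des(\xi_\lambda(T)) = n - \Des(T)$ for every $T \in \SYT(\lambda)$; see, e.g., \cite[Ch.~7.23]{Stanley_ECII}. Applying this bijection to the right-hand side of the display yields
\[
|\{T \in \SYT(\lambda) : \Des(T) = D\}| = |\{T \in \SYT(\lambda) : \Des(T) = n - D\}|
\]
for every $\lambda \vdash n$, and substituting into the previous display gives the desired identity $|\{b \in \BBB : \Des(b) = D\}| = |\{b \in \BBB : \Des(b) = n - D\}|$, which is the claim.

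The only delicate ingredient is the evacuation identity $\Des(\xi_\lambda(T)) = n - \Des(T)$; since this is a standard result, the main ``obstacle'' is simply locating the cleanest reference rather than proving anything new. A structurally equivalent alternative route would be to first invoke Theorem~\ref{m1}(iii) together with Remark~\ref{clarification} to partition $\BBB$ into blocks that are in $\Des$-preserving bijection with $\SYT(\lambda^{(i)})$ for some shapes $\lambda^{(i)}\vdash n$, and then apply evacuation blockwise. That variant uses Schur-positivity explicitly, whereas the approach sketched above uses only the weaker symmetry of $\Q(\BBB)$.
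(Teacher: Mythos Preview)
Your proposal is correct. The paper's proof is essentially your alternative route, phrased on the permutation side: it invokes Theorem~\ref{m1}(iii) and Remark~\ref{clarification} to reduce to the case where $\BBB$ is a single Knuth class $\mathcal{K}$ of some shape $\lambda$, then observes that $w_0\mathcal{K}w_0$ is again a Knuth class of shape $\lambda$ and applies Equation~\eqref{eq:n-Des}. Under RSK the map $\pi\mapsto w_0\pi w_0$ corresponds to evacuation on the tableaux, so this is the same combinatorial fact you invoke, just on the permutation side rather than on $\SYT(\lambda)$.

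Your main approach differs in one useful respect: by working directly from the Schur expansion of $\Q(\BBB)$ and applying evacuation to $\SYT(\lambda)$, you use only the symmetry of $\Q(\BBB)$, not Schur-positivity. The paper's reduction via Theorem~\ref{m1}(iii) relies on the full fineness hypothesis to obtain the block decomposition into Knuth classes. So your argument proves the slightly stronger statement that the conclusion holds whenever $\Q(\BBB)$ is symmetric, regardless of Schur-positivity. Otherwise the two arguments are equivalent in content.
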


\begin{proof}
By Theorem~\ref{m1} together with 
Remark~\ref{clarification}, it suffices to prove that the statement holds when $\BBB$ is a Knuth class. Observe that for every
Knuth class ${\mathcal K}\subseteq\S_n$ of shape $\lambda$, the
set $w_0 {\mathcal K} w_0$ is also a Knuth class of shape
$\lambda$. Finally, use Equation~\eqref{eq:n-Des}.
\end{proof}

\begin{proposition}\label{prop_reflections}
Let $\G$ be a grid class, and let $\G^\mathrm{ver}$ and
$\G^\mathrm{hor}$ be the grid classes obtained by reflecting the
grid of $\G$ vertically and horizontally, respectively. If $\G_n$
is fine for the $\S_n$-representation $\rho$, then
$\G^\mathrm{ver}_n$ and $\G^\mathrm{hor}_n$ are fine for
$S^{(1^n)}\otimes \rho$.
\end{proposition}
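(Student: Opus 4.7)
The plan is to interpret the two reflections as left and right multiplication by $w_0$, track the resulting transformation of the descent set, and recognize the outcome as the image of $\Q(\G_n)$ under the standard involution $\omega$ on (quasi)symmetric functions.

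Geometrically, reflecting the grid of $\G$ across a horizontal axis flips the bottom-to-top labeling of the dots: a dot originally labeled $k$ becomes labeled $n+1-k$, while reading positions are unchanged. Thus the resulting permutation is $w_0\pi=(n+1-\pi(1))\cdots(n+1-\pi(n))$, so $\G^{\mathrm{ver}}_n=\{w_0\pi:\pi\in\G_n\}$, and a direct computation from the definitions gives $\Des(w_0\pi)=[n-1]\setminus\Des(\pi)$. Similarly, reflecting across a vertical axis preserves the labels but reverses the reading order, yielding $\G^{\mathrm{hor}}_n=\{\pi w_0:\pi\in\G_n\}$, with $\Des(\pi w_0)=[n-1]\setminus(n-\Des(\pi))$ by an argument analogous to Equation~\eqref{eq:n-Des}.

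Recall that the involution $\omega$ on quasisymmetric functions acts on the fundamental basis by $\omega F_{n,D}=F_{n,[n-1]\setminus D}$, and on Schur functions by $\omega s_\lambda=s_{\lambda'}$. For the vertical case the descent calculation immediately gives
\[
\Q(\G^{\mathrm{ver}}_n)=\sum_{\pi\in\G_n}F_{n,[n-1]\setminus\Des(\pi)}=\omega\,\Q(\G_n).
\]
The horizontal case additionally involves the map $\Des\mapsto n-\Des$, which is not a standard operation on the $F$-basis. This is precisely where Lemma~\ref{m3} is essential: since $\G_n$ is fine, $\Des$ and $n-\Des$ are equidistributed over $\G_n$, so their complements in $[n-1]$ are equidistributed as well, and
\[
\Q(\G^{\mathrm{hor}}_n)=\sum_{\pi\in\G_n}F_{n,[n-1]\setminus(n-\Des(\pi))}=\sum_{\pi\in\G_n}F_{n,[n-1]\setminus\Des(\pi)}=\omega\,\Q(\G_n).
\]

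Finally, since $\chi^{(1^n)}\otimes\chi^\lambda=\chi^{\lambda'}$ for every $\lambda\vdash n$, tensoring $\rho$ with the sign representation corresponds under the characteristic map to applying $\omega$ to $\ch(\chi^\rho)=\Q(\G_n)$. Hence both $\Q(\G^{\mathrm{ver}}_n)$ and $\Q(\G^{\mathrm{hor}}_n)$ equal $\ch(\chi^{S^{(1^n)}\otimes\rho})$, so $\G^{\mathrm{ver}}_n$ and $\G^{\mathrm{hor}}_n$ are fine for $S^{(1^n)}\otimes\rho$. The main obstacle is the horizontal case, whose descent-set transformation is not self-dual in the obvious sense; Lemma~\ref{m3} supplies exactly the equidistribution needed to replace $n-\Des$ with $\Des$ and close the argument.
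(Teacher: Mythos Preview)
Your proof is correct, but it takes a somewhat different route from the paper. The paper handles the horizontal case first by invoking Theorem~\ref{main}: since $\{w_0\}=D_{n,[n-1]}^{-1}$ is an inverse descent class with $S^{Z_{n,[n-1]}}\cong S^{(1^n)}$, the multiset product $\G_n w_0$ is fine for $\rho\otimes S^{(1^n)}$ directly from that theorem. Then it deduces the vertical case by writing $\G_n^{\mathrm{ver}}=w_0(\G_n w_0)w_0$ and applying Lemma~\ref{m3} to the already-established fine set $\G_n w_0$.

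You instead compute the descent sets of $w_0\pi$ and $\pi w_0$ explicitly and interpret both reflections through the involution $\omega$ on quasisymmetric functions. This makes the vertical case immediate (no auxiliary lemma needed), while the horizontal case requires Lemma~\ref{m3} applied to $\G_n$ itself. Your approach is more elementary in that it avoids the machinery of Theorem~\ref{main} entirely, relying only on the standard identity $\omega F_{n,D}=F_{n,[n-1]\setminus D}$ and the fact that $\omega$ corresponds to tensoring with sign. The paper's approach, by contrast, exhibits the proposition as a special case of its main structural theorem, which is natural in context but heavier. Both proofs ultimately hinge on Lemma~\ref{m3}; they just apply it to different fine sets ($\G_n$ versus $\G_n w_0$) and in opposite order.
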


\begin{proof}
Note that $\G^\mathrm{hor}_n=\G_n w_0$.
Since $\{w_0\}=D_{n,[n-1]}^{-1}$ is an inverse descent class, where
$S^{Z_{n,[n-1]}}\cong S^{(1^n)}$, Theorem~\ref{main} implies that
$\G_n w_0$ is a fine set for $S^{(1^n)}\otimes \rho$.

Similarly, $\G^\mathrm{ver}_n=w_0\G_n=w_0(\G_n w_0)w_0$. By Equation~\eqref{eq:n-Des}, the distribution of $\Des$ over $w_0\G_n$ equals the distribution of $n-\Des$ over $\G_n w_0$, which in turn equals the distribution of $\Des$ over $\G_n w_0$ by Lemma~\ref{m3} applied to the fine set $\G_n w_0$.
This shows that the distribution of $\Des$ is the same over $\G^\mathrm{ver}$ and $\G^\mathrm{hor}$, completing the proof.
\end{proof}

\begin{corollary}\label{cor_equid_rotation}
Let $\G$ be a Schur-positive grid class, and let $\G^\mathrm{rot}$ be the grid class obtained
by rotating the grid for $\G$ by $180$ degrees. Then
\begin{equation}\label{eq:M*}
\Q(\G_n)=\Q(\G^\mathrm{rot}_n).
\end{equation}
\end{corollary}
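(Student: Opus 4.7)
The plan is to reduce the corollary to Lemma~\ref{m3} via the observation that a $180$-degree rotation of the grid corresponds, on the permutation side, to conjugation by $w_0$. First I would verify geometrically that if $\pi\in\G_n$ is obtained by reading a valid placement of dots on the grid of $\G$, then rotating the whole picture by $180$ degrees produces a valid placement on the grid of $\G^\mathrm{rot}$, and the permutation read from the rotated picture is the reverse-complement $w_0\pi w_0$. (Indeed, the rotation sends the $i$-th dot from the left to position $n+1-i$ from the left, and reverses the vertical ranking, giving the new $j$-th entry $n+1-\pi(n+1-j)$.) This yields a bijection $\G_n\to\G^\mathrm{rot}_n$ given by $\pi\mapsto w_0\pi w_0$.

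Next, I would invoke Equation~\eqref{eq:n-Des}, which says $\Des(w_0\pi w_0)=n-\Des(\pi)$. Composing with the bijection above, the distribution of $\Des$ over $\G^\mathrm{rot}_n$ coincides with the distribution of $n-\Des$ over $\G_n$. Finally, since $\G_n$ is assumed to be Schur-positive, Lemma~\ref{m3} gives that $\Des$ and $n-\Des$ are equidistributed over $\G_n$. Chaining these two equalities yields that $\Des$ has the same distribution over $\G_n$ and $\G^\mathrm{rot}_n$, which is exactly Equation~\eqref{eq:M*}.

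A parallel route, more aligned with the placement of the corollary right after Proposition~\ref{prop_reflections}, would be to write $\G^\mathrm{rot}=(\G^\mathrm{ver})^\mathrm{hor}$ and apply that proposition twice: $\G^\mathrm{ver}_n$ is fine for $S^{(1^n)}\otimes\rho$, and then $\G^\mathrm{rot}_n$ is fine for $S^{(1^n)}\otimes S^{(1^n)}\otimes\rho\cong\rho$, using that the sign representation is its own tensor inverse. Taking Frobenius images gives $\Q(\G^\mathrm{rot}_n)=\ch(\rho)=\Q(\G_n)$.

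The only real step to check in either route is the geometric identification of the rotation operation on grids with the operation $\pi\mapsto w_0\pi w_0$ on permutations; this is immediate from the definition of $\G_n(M)$ but deserves to be stated carefully. Everything after that is a single invocation of Lemma~\ref{m3} (or, alternatively, of Proposition~\ref{prop_reflections} iterated).
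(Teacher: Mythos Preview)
Your proposal is correct and matches the paper's intended argument. The corollary is stated without an explicit proof in the paper, precisely because it follows immediately from Proposition~\ref{prop_reflections} (your second route), or equivalently by the direct observation $\G^\mathrm{rot}_n=w_0\G_n w_0$ together with Equation~\eqref{eq:n-Des} and Lemma~\ref{m3} (your first route, which is essentially how the vertical-reflection half of Proposition~\ref{prop_reflections} itself is proved).
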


Note that if $\G=\G(M)$ for some $k\times l$ matrix $M$, then $\G^\mathrm{rot}=\G(M^*)$, where $M^*$ is defined by
$m_{i,j}^*= m_{k+1-i,l+1-j}$. Interestingly, Equation~\eqref{eq:M*} does not necessarily hold on
general grid classes. For example, letting $\G=\G(-1\ \ 1)$, we have
$$\Q(\G_3) = F_{3,\emptyset}+2F_{3,\{1\}}+F_{3,\{1,2\}}\ne
F_{3,\emptyset}+2F_{3,\{2\}}+F_{3,\{1,2\}}=\Q(\G^\mathrm{rot}_3).$$ In fact,
for any any non-palindromic one-row matrix $M$ one can easily
verify that $\Q(\G_n(M)) \ne \Q(\G_n(M^*))$ if $n$ is large enough.

For
$\vv=(v_1,\dots,v_k)\in \{1,-1\}^k$ let
$\vv^R:=(v_k,\dots,v_1)$.
A special case of Corollary~\ref{cor_equid_rotation} is the fact that
\[
\Q(\G^\vv_n)=\Q(\G^{\vv^R}_n).
\]

\subsection{Stacking grids}\label{sec:stacking}

Another natural geometric operation on grids consists of
stacking one grid on top of another. In the case of two
multiple-column grids, there is an ambiguity in the choice of width of the columns.
Thus, we will only consider the case where one of the
stacked grids has a single column.

\begin{defn}[The stacking operation]
For a grid class $\mathcal{H}$ and $\vv\in\{+,-\}^k$, let $\G^{\vv,
\mathcal{H}}$ ($\G^{\mathcal{H}, \vv}$) be the grid class obtained by
placing the grid for $\G^\vv$ below (atop) the one for $\mathcal{H}$.
\end{defn}

Figure~\ref{fig:above} shows an example of the stacking operation.

\begin{figure}[htb]
\centering
\begin{tikzpicture}[scale=.4]
\draw (0,0) rectangle (10,10);
\draw[dotted] (0,6)--(10,6); \draw[dotted] (5,0)--(5,6);
\draw[dotted] (0,5)--(10,5); \draw[dotted] (0,3)--(10,3); \draw[dotted] (0,1)--(10,1);
\draw[thick] (0,6)--(10,8); \draw[thick] (0,10)--(10,8);
\draw[thick] (0,5)--(5,6); \draw[thick] (0,3)--(10,5);
\draw[thick] (0,1)--(10,3); \draw[thick] (5,0)--(10,1);
\end{tikzpicture}
\caption{The grid $\G^{\G(M_3),(+,-)}$.} \label{fig:above}
\end{figure}

By Corollary~\ref{cor_equid_rotation}, for any grid class
$\G(M)$ and any $\vv\in\{+,-\}^k$ such that $\G^{\G(M), \vv}$ is fine, we have
$$
\Q(\G^{\G(M), \vv}_n)=\Q(\G^{\vv^R, \G(M^*)}_n).
$$

\begin{question}
Let $\mathcal{H}$ be a fine grid and let $\vv\in\{+,-\}^k$. Is the
stacked grid $\G^{\vv, \mathcal{H}}$ necessarily fine?
\end{question}

Computer experiments hint at an affirmative answer.
Note that $\G^{\vv, \mathcal{H}}$ is the underlying set of the multiset obtained by shuffling $\G^{\vv}$ and $\mathcal{H}$. This multiset is fine by Lemma~\ref{lemma_shuffles}.
In the special case that $\mathcal{H}$ has a single column, the grid $\G^{\vv, \mathcal{H}}$ has a single column as well, and so it is a fine set  by Corollary~\ref{one-column_zigzags}. In this section we consider some other instances when $\G^{\vv, \mathcal{H}}$ is fine.

Let $$\mathcal{J}:=\G^{\LLL^2,+}=\G\left(\begin{array}{cc}
           0 & 1 \\
           1 & 0 \\
           1 & 0 \\
           0 & 1
         \end{array}\right) \qquad \text{and} \qquad
\mathcal{K}:=\G^{-,\LLL^2}
= \G\left(\begin{array}{cc}
           1 & 0 \\
           0 & 1 \\
           -1 & 0 \\
           0 & -1
         \end{array}\right).$$
The corresponding grids are drawn in Figure~\ref{fig:HK}.
In terms of shuffles, ${\mathcal J}_n$ is the set of permutations in $\S_n$
resulting from shuffling a 2-colayered permutation with the
identity permutation, and ${\mathcal K}_n$ is the set of
permutations resulting from shuffling the reverse identity
permutation with a 2-colayered permutation.

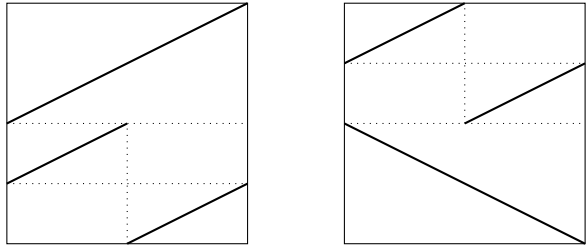
\begin{figure}[htb]
\centering
\begin{tikzpicture}[scale=.8]
\draw (0,0) rectangle (4,4); \draw[dotted] (0,2)--(4,2);
\draw[dotted] (0,1)--(4,1); \draw[dotted] (2,0)--(2,2);
\draw[thick] (0,2)--(4,4); \draw[thick] (0,1)--(2,2); \draw[thick]
(2,0)--(4,1);
\end{tikzpicture}
\hspace{10mm}
\begin{tikzpicture}[scale=.8]
\draw (0,0) rectangle (4,4); \draw[dotted] (0,2)--(4,2);
\draw[dotted] (0,3)--(4,3); \draw[dotted] (2,2)--(2,4);
\draw[thick] (0,2)--(4,0); \draw[thick] (0,3)--(2,4); \draw[thick]
(2,2)--(4,3);
\end{tikzpicture}
\caption{The grids for $\mathcal{J}$ (left) and $\mathcal{K}$
(right).} \label{fig:HK}
\end{figure}

\begin{proposition}\label{KJ:cardinality}
We have $|\mathcal{J}_n|=|\mathcal{K}_n|=(n-2)2^{n-1}+2$.
\end{proposition}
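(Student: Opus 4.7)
The plan is to compute each cardinality directly by partitioning each set according to a canonical choice of the split point $j$ from the shuffle definition, and then evaluating the resulting sums with hockey-stick identities.

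For $\mathcal{J}_n$, to each $\pi\in\mathcal{J}_n$ I would associate
\[
j_*(\pi):=\min\{j\ge 0:\text{values }j+1,\dots,n\text{ appear in increasing order in }\pi\}.
\]
The condition ``$\{j+1,\dots,n\}$ appears increasingly in $\pi$'' is upward monotone in $j$, and the condition ``$\{1,\dots,j\}$ is 2-colayered in $\pi$'' is downward monotone in $j$ (deleting the maximum letter of a 2-colayered permutation preserves the property), so $\pi\in\mathcal{J}_n$ if and only if $\{1,\dots,j_*(\pi)\}$ is 2-colayered in $\pi$. The case $j_*=0$ contributes only the identity. For $1\le j\le n-1$, I would parametrize a contributor $\pi$ by the position set $P=\{p_1<\cdots<p_j\}$ of values $\{1,\dots,j\}$ together with the cyclic shift $m\in\{0,\dots,j-1\}$ of the 2-colayered arrangement on $P$; then value $j$ sits at $p_{j-m}$, value $j+1$ sits at $s:=\min([n]\setminus P)$, and the condition $j_*(\pi)=j$ becomes $p_{j-m}>s$. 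For fixed $s$, imposing $\{1,\dots,s-1\}\subseteq P$ and $s\notin P$ yields $\binom{n-s}{j-s+1}$ choices of $P$ and permits exactly $j-s+1$ of the $j$ shifts; summing over $s$ and applying a hockey-stick identity collapses the count at level $j$ to $(n-j)\binom{n}{j-1}$. The identity $(n-j)\binom{n}{j-1}=j\binom{n}{j}-\binom{n}{j-1}$, together with $\sum_j j\binom{n}{j}=n2^{n-1}$ and $\sum_j\binom{n}{j}=2^n$, then yields
\[
|\mathcal{J}_n|\;=\;1+\sum_{j=1}^{n-1}(n-j)\binom{n}{j-1}\;=\;(n-2)\,2^{n-1}+2.
\]

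For $\mathcal{K}_n$ the argument is analogous: I would use $j_{\text{dec}}(\pi):=\max\{j:\{1,\dots,j\}\text{ is decreasing in }\pi\}$, which is well-defined because $\{1\}$ is trivially decreasing. Parametrizing by the positions of $\{1,\dots,j\}$ (arranged decreasingly) and by the cyclic shift of the 2-colayered arrangement on the complementary positions, and now imposing that the position of $j+1$ exceeds the position of $j$, the contribution at each $j\in[1,n-1]$ evaluates (again by a hockey-stick identity) to $j\binom{n}{j+1}$; the case $j=n$ contributes one further permutation, the reverse identity, and the same binomial manipulation reproduces $(n-2)2^{n-1}+2$. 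The main technical obstacle in both enumerations is the careful book-keeping that translates the ``canonical $j$'' constraint into an explicit count in terms of $P$ and the shift $m$; once that translation is made, a single hockey-stick identity finishes the computation.
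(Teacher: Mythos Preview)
Your argument is correct, and it is genuinely different from the paper's. The paper decomposes by a \emph{position} parameter: for $\mathcal{J}_n$ it first peels off the subclass $\G^{++}_n$ and then, for the remaining permutations, indexes by the position $j$ with $\pi(j)=1$, obtaining $\sum_{j\ge 3}(2^{j-1}-j)2^{n-j}$; for $\mathcal{K}_n$ it peels off $\LL_n$ and indexes by the first position at which left-unimodality fails, obtaining $\sum_{j\ge 3}(2^{j-1}-2)2^{n-j}$. Your approach instead fixes a canonical \emph{value} parameter~$j$ coming directly from the shuffle description, made unique by the two opposite monotonicities, and then parametrizes by the pair (position set, cyclic shift); the constraint singling out the canonical $j$ translates into a simple inequality on one coordinate, and a hockey-stick identity collapses each level to a closed form. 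Both routes are short; the paper's position-based decomposition has the advantage that it is reused verbatim in the subsequent $\Des$-preserving bijections proving Schur-positivity of $\mathcal{J}_n$ and $\mathcal{K}_n$, whereas your value-based decomposition is more uniform (no separate base class to subtract) and the monotonicity device is a clean, reusable trick for enumerating such stacked shuffles.
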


\begin{proof}
First, we prove that $|\mathcal{J}_n|=(n-2)2^{n-1}+2$.
Considering the left half of the grid for $\mathcal{J}_n$, it is clear that $\Sh_n\subset\mathcal{J}_n$. Note that
$|\Sh_n|=2^n-n$, since every $\pi\in\Sh_n$ is uniquely determined
by the choice of dots $\pi(i)$ placed on the upper and lower
segments of the grid, except for the identity permutation, which
is obtained in $n+1$ ways. It remains to count the permutations in
$\mathcal{J}_n$ that are not in $\Sh_n$.

For $\pi\in\mathcal{J}_n\setminus\Sh_n$, let $j$ be the index such that $\pi(j)=1$. Then $\pi(1)\pi(2)\dots\pi(j-1)$ is not increasing, because any drawing of $\pi$ on the grid for $\mathcal{J}$ where the entries to the left of the lowest dot are increasing would have to be a shuffle of two increasing sequences. It follows that any drawing of $\pi$ on the grid  for $\mathcal{J}$ has $\pi(j)=1$ as the leftmost dot on the lowest segment. The permutation $\st(\pi(1)\pi(2)\dots\pi(j-1))$, where $\st$ is the standardization operation defined at the end of Section~\ref{sec:grid}, can then be any permutation in $\Sh_{j-1}$ other than the identity, and so there
are $|\Sh_{j-1}|-1=2^{j-1}-j$ choices for it.
To the right $\pi(j)$, any choice for which dots are drawn on
the upper segment and which are drawn on the lower segment is
valid and gives a different permutation $\pi$.

It follows that
$$|\mathcal{J}_n\setminus\Sh_n|=\sum_{j=3}^n(2^{j-1}-j)2^{n-j}=(n-4)2^{n-1}+n+2,$$
and so $$|\mathcal{J}_n|=2^n-n+(n-4)2^{n-1}+n+2=(n-2)2^{n-1}+2.$$

\medskip

Next we prove that $|\mathcal{K}_n|=(n-2)2^{n-1}+2$ as well.
Considering the left half of the grid for $\mathcal{K}_n$, we see that $\LL_n\subset\mathcal{K}_n$. It is known that
$|\LL_n|=2^{n-1}$. It remains to count the permutations in
$\mathcal{K}_n$ that are not left-unimodal

For $\pi\in\mathcal{K}_n\setminus\LL_n$, let $j$ be the smallest index
such that $\st(\pi(1)\pi(2)\dots\pi(j))\notin\LL_j$. Equivalently, $j$ is the smallest index with the property that $\pi(j)$ is neither the largest nor the smallest of all the preceding entries. Note that
$j\ge3$, and that in any drawing of $\pi$ on the grid, the dot for
$\pi(j)$ must lie on the middle segment (the one whose left endpoint is in the center of the grid). We can assume that $\pi(j)$ is the leftmost dot on the middle segment, since the only exception would happen if $\pi(j-1)$ was on that segment as well, in which case it could be moved to the lower segment without changing the permutation.
The choices for the remaining dots are as follows.
To the left of $\pi(j)$, we can
choose a subset of $[j-1]$ to be the positions of the dots drawn
on the upper segment. The empty set and $[j-1]$ are not valid
subsets, by the choice of $j$, but all other subsets will result
in different permutations, so there are $2^{j-1}-2$ choices for
these dots. To the right of $\pi(j)$, any choice for which dots
are drawn on the middle and lower segments is valid and gives a
different permutation, so there are $2^{n-j}$ choices.

It follows that
$$|\mathcal{K}_n\setminus\LL_n|=\sum_{j=3}^n(2^{j-1}-2)2^{n-j}=(n-3)2^{n-1}+2,$$
and so $$|\mathcal{K}_n|=2^{n-1}+(n-3)2^{n-1}+2=(n-2)2^{n-1}+2.$$
\end{proof}

\begin{remark}
We do not know if the equality $|\mathcal{J}_n|=|\mathcal{K}_n|$ can be explained as part of a more general setting to produce grid classes with the same cardinality but different quasisymmetric functions.
\end{remark}

\begin{proposition}\label{J:prop} $\mathcal{J}_n$ is a fine set, and
$$\Q(\mathcal{J}_n)=s_n+\sum_{a=1}^{\lfloor\frac{n}{2}\rfloor} (n-2a+1)s_{n-a,a}+\sum_{a=1}^{\lfloor \frac{n-1}{2}\rfloor} (n-2a)s_{n-a-1,a,1}.$$
\end{proposition}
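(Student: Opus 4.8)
The plan is to split $\mathcal{J}_n$ according to whether a permutation already lies in the shuffle class $\Sh_n$, which sits inside $\mathcal{J}_n$ because the left half of the grid for $\mathcal{J}$ is exactly the grid for $\Sh$. Thus $\mathcal{J}_n=\Sh_n\sqcup(\mathcal{J}_n\setminus\Sh_n)$, and by Corollary~\ref{cor:QSh} we already know $\Q(\Sh_n)=s_n+\sum_{a=1}^{\lfloor n/2\rfloor}(n-2a+1)s_{n-a,a}$. Since the stated formula is exactly $\Q(\Sh_n)+\sum_{a=1}^{\lfloor(n-1)/2\rfloor}(n-2a)s_{n-a-1,a,1}$, and this last sum is visibly symmetric and Schur-positive (its coefficients $n-2a$ are positive in the stated range), it suffices to prove
\[
\Q(\mathcal{J}_n\setminus\Sh_n)=\sum_{a=1}^{\lfloor(n-1)/2\rfloor}(n-2a)\,s_{n-a-1,a,1}.
\]
Once this is established, fineness of $\mathcal{J}_n$ is immediate from the definition, since $\Q(\mathcal{J}_n)$ is then an explicit symmetric, Schur-positive function.

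To compute $\Q(\mathcal{J}_n\setminus\Sh_n)$ I would reuse the bijective parametrization isolated in the proof of Proposition~\ref{KJ:cardinality}: every $\pi\in\mathcal{J}_n\setminus\Sh_n$ is determined by the position $j$ of its entry $1$ (forced to be the leftmost dot on the lowest segment, with $3\le j\le n$), the standardized prefix $\rho:=\st(\pi(1)\cdots\pi(j-1))\in\Sh_{j-1}\setminus\{\mathrm{id}\}$, and the choice $T$ of which of the $n-j$ suffix dots lie on the upper segment. The first step is to read off the descent set from this data. Because $\pi(j)=1$ is the global minimum there is a forced descent at $j-1$ and a forced ascent at $j$; within the prefix the descents are exactly those of $\rho$, and within the suffix they are exactly the upper-then-lower transitions recorded by $T$. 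Hence
\[
\Des(\pi)=\Des(\rho)\ \sqcup\ \{j-1\}\ \sqcup\ D_{\mathrm{suf}}(T),\qquad \Des(\rho)\subseteq[1,j-2],\ \ D_{\mathrm{suf}}(T)\subseteq[j+1,n-1].
\]

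The heart of the argument is to resum $\sum_\pi F_{n,\Des(\pi)}$ over this data and identify it with the three-row expression. The target shapes are pinned down by a Pieri-type computation: since $\Q(C_m)=s_m+s_{m-1,1}$ (Corollary~\ref{cor:LLLfine}) gives $\Q(C_m\setminus\{\mathrm{id}\})=s_{m-1,1}$, and $\Q(\G^+_{n-m})=s_{n-m}$, Lemma~\ref{lemma_shuffles} shows that the multiset shuffle $\bigsqcup_{m=2}^n (C_m\setminus\{\mathrm{id}\})\shuffle\G^+_{n-m}$ has quasisymmetric function $\sum_{m=2}^n s_{m-1,1}\,s_{n-m}$. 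Expanding each product by the Pieri rule (adding the horizontal strip $s_{n-m}=h_{n-m}$ to $(m-1,1)$), the three-row terms are precisely the shapes $(n-a-1,a,1)$ with $1\le a\le\min(m-1,n-m)$, so the coefficient of $s_{n-a-1,a,1}$ in the total is $\#\{m:a+1\le m\le n-a\}=n-2a$; this is exactly the claimed three-row part. It therefore remains to show that, on the underlying \emph{set} $\mathcal{J}_n\setminus\Sh_n$, only this three-row part survives. Concretely I would aim to construct a $\Des$-preserving bijection from $\mathcal{J}_n\setminus\Sh_n$ onto the disjoint union over $a$ of $n-2a$ copies of $\SYT((n-a-1,a,1))$—the single bottom cell matching the forced valley at the entry $1$, and the two longer rows encoding the interleaving of the large values with the two-colayered part recorded by $\rho$ and $T$—and then conclude via Proposition~\ref{G1}.

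The main obstacle is exactly this last matching. The individual summands $F_{n,\Des(\pi)}$ are not symmetric, and symmetry only emerges after summing over all $j$: for instance the $j=j_0$ contributions do not factor through the symmetric functions $\Q(\Sh_{j_0-1})$ and $\sum_{T}F_{\cdot,D_{\mathrm{suf}}(T)}$, because the forced valley couples the prefix monomial to the suffix monomial. The delicate point is to control how the prefix statistic $\Des(\rho)$, the forced descent at $j-1$, and the suffix statistic $D_{\mathrm{suf}}(T)$ interlock into one three-row tableau, and in particular why every two-row term cancels. A useful consistency check throughout is the cardinality identity $\lvert\mathcal{J}_n\setminus\Sh_n\rvert=\sum_{j=3}^n(2^{j-1}-j)2^{n-j}$, which must agree with $\sum_{a}(n-2a)\,\lvert\SYT((n-a-1,a,1))\rvert$; verifying this equality (and its refinement recording $\Des$) is essentially equivalent to producing the desired bijection.
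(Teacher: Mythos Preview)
Your decomposition $\mathcal{J}_n=\Sh_n\sqcup(\mathcal{J}_n\setminus\Sh_n)$ and the identification of the target three-row contribution $\sum_a(n-2a)s_{n-a-1,a,1}$ are exactly right, and in fact match the paper's own approach. The gap is the one you name yourself: you never actually produce the $\Des$-preserving bijection from $\mathcal{J}_n\setminus\Sh_n$ to the multiset of three-row tableaux. Your Pieri heuristic via $\sum_m s_{m-1,1}s_{n-m}$ is suggestive but does not close the argument, because that product also carries two-row terms, and nothing in your sketch explains why the set $\mathcal{J}_n\setminus\Sh_n$ sees only the three-row piece. Indeed, the multiset shuffle $\bigsqcup_m(C_m\setminus\{\mathrm{id}\})\shuffle\G^+_{n-m}$ is not the same object as $\mathcal{J}_n\setminus\Sh_n$, so the comparison is only heuristic.

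The paper fills this gap with an explicit relabeling trick that you are missing. Given $\pi\in\mathcal{J}_n\setminus\Sh_n$ with $\pi(j)=1$, one does not apply RSK directly; instead one first replaces $\pi$ by a permutation $\pi'$ with the same descent set, obtained by renaming values so that the prefix becomes a nontrivial shuffle of $2,\dots,s-r+1$ with $s+1,\dots,t$ and the suffix becomes a shuffle of $s-r+2,\dots,s$ with $t+1,\dots,n$. In other words, $\pi'$ is an element of $\Sh_{n-1}\setminus\{\mathrm{id}\}$ (on the letters $2,\dots,n$) with the letter $1$ inserted somewhere after the first descent. One then removes the entry $1$, applies RSK to obtain a two-row recording tableau $Q'$ with entries $[n]\setminus\{j\}$, and appends a single box containing $j$ as a third row. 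This sends the forced valley at position $j$ to the lone third-row cell and makes the multiplicity count transparent: Corollary~\ref{cor:QSh} applied to $\Sh_{n-1}\setminus\{\mathrm{id}\}$ gives exactly $n-2a$ two-row tableaux of shape $(n-a-1,a)$, each of which receives one extra box. The key step you are missing is precisely this value-relabeling $\pi\mapsto\pi'$; without it, the prefix and suffix do not assemble into a single element of $\Sh_{n-1}$ and RSK cannot be invoked cleanly.
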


\begin{proof}
We construct a $\Des$-preserving bijection $f$ from
$\mathcal{J}_n$ to the multiset of SYT of the
shapes that index the Schur functions in the formula.

For $\pi\in\Sh_n$, let $f(\pi)$ be the recording tableau $Q$
obtained when applying RSK to $\pi$. As in Corollary~\ref{cor:QSh},
$f$ is a bijection between $\Sh_n$ and the
SYT whose shapes are given by the multiset where each two-row
partition $(n-a,a)$ has multiplicity $n-2a+1$ for each $1\le a\le
\lfloor\frac{n}{2}\rfloor$, and the shape $(n)$ has multiplicity
$1$.

For $\pi\in\mathcal{J}_n\setminus\Sh_n$, let again $j$ be such that $\pi(j)=1$.
Then $\pi(1)\pi(2)\dots\pi(j-1)$ is a shuffle (other than the
trivial increasing one) of two increasing sequences, say
$r+1,r+2,\dots,s$ and $s+1,s+2,\dots,t$, with $r<s<t$. Let $\pi'$ be the permutation obtained from $\pi$
by decreasing the entries $r+1,r+2\dots,s$ by $r-1$, and
increasing the entries $2,3,\dots,r$ by $s-r$. Then $\pi'$
consists of a non-trivial shuffle of the sequences
$2,3,\dots,s-r+1$ and $s+1,s+2,\dots,t$, followed by the entry
$\pi'(j)=1$, followed by a shuffle of the sequence
$s-r+2,s-r+3,\dots,s$ and $t+1,t+2,\dots,n$. Equivalently,
$\pi'$ is an arbitrary element of $\Sh_{n-1}\setminus\{12\dots
(n-1)\}$ with its entries increased by $1$ and the letter $1$
inserted anywhere after the first descent. Note that $\Des(\pi')=\Des(\pi)$.

To describe $f(\pi)$, first apply RSK to the permutation $\pi'$
with the entry $\pi'(j)=1$ removed, and let $Q'$ be the recording
tableau with entries $[n]\setminus\{j\}$. Let $f(\pi)$ be the
tableau obtained from $Q'$ by adding a third row consisting of a
box with the entry $j$.

By construction,
$\Des(f(\pi))=\Des(Q')\cup\{j-1\}=\Des(\pi')=\Des(\pi)$.
Recall from Corollary~\ref{cor:QSh} that taking the recording tableau under RSK gives a bijection between $\Sh_{n-1}\setminus\{12\dots (n-1)\}$ and the
multiset of SYT where each tableau of shape $(n-a-1,a)$ appears $n-2a$ times for $1\le a\le \lfloor\frac{n-1}{2}\rfloor$. It follows that the map $f$ is a bijection between
$\mathcal{J}_n\setminus\Sh_n$ and the multiset of SYT with $3$
rows and exactly one box on the third row, where every tableaux of
shape $(n-a-1,a,1)$ appears $n-2a$ times for each $1\le a\le
\lfloor\frac{n-1}{2}\rfloor$.
\end{proof}

\begin{proposition}\label{K:prop} $\mathcal{K}_n$ is a fine set, and
$$\Q(\mathcal{K}_n)=s_n+s_{1^n}+2\sum_{k=1}^{n-2} s_{n-k,1^k}+2\sum_{k=1}^{n-3} s_{n-k-1,2,1^{k-1}}.$$
\end{proposition}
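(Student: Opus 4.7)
My plan is to follow the template of the proof of Proposition~\ref{J:prop}: partition $\mathcal{K}_n$ into two pieces, construct a $\Des$-preserving bijection from each piece to a multiset of standard Young tableaux whose shapes appear in the claimed Schur expansion, and conclude via Observation~\ref{union} together with the identity $s_\lambda = \sum_{T \in \SYT(\lambda)} F_{n,\Des(T)}$. The natural partition is $\mathcal{K}_n = \LL_n \sqcup (\mathcal{K}_n \setminus \LL_n)$, exploiting the containment $\LL_n \subset \mathcal{K}_n$ that was already used in the proof of Proposition~\ref{KJ:cardinality}. For the first piece, the decomposition $\LL_n = \bigsqcup_{k=0}^{n-1} D_{n,[k]}^{-1}$ from Equation~\eqref{Eq:LL} together with Lemma~\ref{lem:DJ} gives $\Q(\LL_n) = \sum_{k=0}^{n-1} s_{n-k,1^k}$, since $Z_{n,[k]}$ is the hook $(n-k,1^k)$. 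This accounts for the corner terms $s_n$ and $s_{1^n}$ along with one copy of each interior hook $s_{n-k,1^k}$ for $1 \le k \le n-2$ in the target formula.

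The bulk of the work is then to build a $\Des$-preserving bijection from $\mathcal{K}_n \setminus \LL_n$ onto a multiset consisting of one copy of each $T \in \SYT((n-k,1^k))$ for $1 \le k \le n-2$, together with two copies of each $T \in \SYT((n-k-1,2,1^{k-1}))$ for $1 \le k \le n-3$. For this I would use the structural description obtained in the cardinality argument of Proposition~\ref{KJ:cardinality}: each $\pi \in \mathcal{K}_n \setminus \LL_n$ admits a unique breaking index $j \ge 3$ with $\st(\pi(1)\cdots\pi(j-1)) \in \LL_{j-1}$, the entry $\pi(j)$ must lie on the middle $+$ segment (and is therefore strictly between the minimum and maximum of $\pi(1),\dots,\pi(j-1)$), and the suffix $\pi(j+1)\cdots\pi(n)$ (possibly empty, when $j=n$) is a free shuffle of an increasing sequence of remaining middle-segment values with a decreasing sequence of bottom-segment values. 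My planned construction first converts the left-unimodal prefix to a hook tableau via the bijection implicit in Lemma~\ref{lem:DJ}, then inserts $\pi(j)$ in row $2$ to promote the hook to a near-hook, and finally processes the suffix entries by a bumping-style rule routing each middle-segment letter into the top row and each bottom-segment letter into the leg. The factor-of-two multiplicity on near-hook shapes is expected to arise from a natural binary case analysis within the construction (for example, based on which segment $\pi(n)$ sits on when a nonempty suffix exists), while permutations with $j=n$ or with suffix containing no middle-segment letters should map to the extra hook-shaped images.

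The main obstacle I anticipate is verifying $\Des$-preservation at the junction index $j$ and within the suffix: the ascent or descent between $\pi(j-1)$ and $\pi(j)$, and between $\pi(j)$ and $\pi(j+1)$ when $j<n$, depends on whether $\pi(j)$ exceeds or is exceeded by its neighbors, and each case must be reconciled with the descent set of the image tableau. A careful case analysis on these comparisons should suffice. As a sanity check, the target cardinalities satisfy
\[
\sum_{k=1}^{n-2} f^{(n-k,1^k)} + 2\sum_{k=1}^{n-3} f^{(n-k-1,2,1^{k-1})} = (2^{n-1}-2) + \bigl((n-4)2^{n-1}+4\bigr) = (n-3)2^{n-1}+2 = |\mathcal{K}_n \setminus \LL_n|,
\]
in agreement with Proposition~\ref{KJ:cardinality}, and the construction can be verified by direct enumeration for small $n$ (e.g., $n=3,4,5$). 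Once the bijection is in hand, Schur-positivity of $\Q(\mathcal{K}_n)$ and the stated formula follow.
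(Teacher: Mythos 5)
Your setup matches the paper's proof: the same decomposition $\mathcal{K}_n=\LL_n\sqcup(\mathcal{K}_n\setminus\LL_n)$, the same handling of $\LL_n$ via hooks, and the same structural description of the complement via the breaking index $j$, the forced placement of $\pi(j)$ on the middle segment, and the routing of suffix letters into the first row or first column according to their segment. The arithmetic sanity check is also correct. But the heart of the proof --- the $\Des$-preserving bijection on $\mathcal{K}_n\setminus\LL_n$ --- is left as a sketch, and the two concrete mechanisms you propose for it are both wrong, so the gap is not merely one of missing detail.

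First, the extra hook-shaped images do not come from permutations with $j=n$ or with an all-bottom suffix. Counting via the argument in Proposition~\ref{KJ:cardinality}, those permutations number $\sum_{j=3}^{n}(2^{j-1}-2)=2^n-2n$ (already $8$ for $n=4$), whereas only $2^{n-1}-2$ hook tableaux ($6$ for $n=4$) are available. The correct dichotomy is on the \emph{prefix}: the permutations mapping to hooks are exactly those for which $\pi(1)\cdots\pi(j-1)$ is monotone (increasing or decreasing), regardless of the suffix; these contribute $2(2^{n-2}-1)=2^{n-1}-2$ permutations, matching the hooks with at least two rows and $2$ in the first row together with those with at least two columns and $2$ in the first column. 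Second, the factor of two on near-hook shapes cannot arise from which segment $\pi(n)$ sits on: moving $\pi(n)$ between the middle (increasing) and bottom (decreasing) segments changes whether $n-1$ is a descent, so the two resulting permutations have different descent sets and cannot share an image under a $\Des$-preserving map. The actual source of the $2$-to-$1$ behavior is again the prefix: when $\pi(1)\cdots\pi(j-1)$ is non-monotone, the dot for $\pi(1)$ may be placed on either the top or the bottom segment, producing two distinct permutations with identical descent sets and identical image tableaux (the descent set of the prefix depends only on which of the positions $2,\dots,j-1$ lie on the bottom segment). Until the case analysis is reorganized along these lines and $\Des$-preservation is checked in each case, the claimed Schur expansion is not established.
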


\begin{proof}
Again, we construct a $\Des$-preserving bijection $g$ from
$\mathcal{K}_n$ to the appropriate multiset of SYT.

For $\pi\in\LL_n$, let $g(\pi)$ be the recording tableau $Q$
obtained when applying RSK to $\pi$. As in Lemma~\ref{lem:QLL},
$g$ is a bijection between $\LL_n$ and the set of SYT whose shape is a hook,
and the contribution of these permutations to the quasisymmetric function is
$\sum_{k=1}^{n-1} s_{n-k,1^k}$.

For $\pi\in\mathcal{K}_n\setminus\LL_n$, let again $j$ be the
smallest such that $\st(\pi(1)\pi(2)\dots\pi(j))\notin\LL_j$, and
consider the drawing of $\pi$ on the grid where the dot for
$\pi(j)$ is the leftmost dot on the right segment. As shown in the
proof of Proposition~\ref{KJ:cardinality}, placing $\pi(j)$ there uniquely determines the
segments on the grid in which each of the other dots lie. Consider
three cases:

\begin{itemize}
\item If $\pi(1)\pi(2)\dots\pi(j-1)$ is increasing, which means
that the dot for $\pi(1)$ is the only one among these in the lower
segment, let $g(\pi)$ be the hook whose entries in the first
column are the indices $i$ such that $\pi(i)\le\pi(j)$. This
construction is a bijection between permutations in this case and
hooks with at least two rows and with the entry $2$ in the first row.

\item If $\pi(1)\pi(2)\dots\pi(j-1)$ is decreasing, which means
that the dot for $\pi(1)$ is the only one among these in the upper
segment, let $g(\pi)$ be the hook whose entries in the first row
are the indices $i$ such that $\pi(i)\ge\pi(j)$. This construction
is a bijection between permutations in this case and hooks with at least two columns and with the entry $2$ in the first column.

\item Finally, if $\pi(1)\pi(2)\dots\pi(j-1)$ is neither
increasing nor decreasing, let $g(\pi)$ be the SYT whose first
$j-1$ entries are given by the recording tableau of
$\pi(1)\pi(2)\dots\pi(j-1)$, the entry $j$ is in the box in the
second row and second column, and the remaining values $i>j$ are
appended to the first row if $\pi(i)>\pi(j)$ and to the first column
if $\pi(i)<\pi(j)$. This construction gives a 2-to-1 map between
permutations in this case and SYT of shape consisting of a hook
plus a box. Indeed, placing the dot $\pi(1)$ in the top or the
bottom segment of the grid results in the same tableau $f(\pi)$
but in a different permutation $\pi$.
\end{itemize}

The
contribution of the permutations where $\pi(1)\pi(2)\dots\pi(j-1)$
is increasing or decreasing is $\sum_{k=1}^{n-2} s_{n-k,1^k}$.
 The contribution of the remaining
permutations is $2\sum_{k=1}^{n-3} s_{n-k-1,2,1^{k-1}}$.
In each of the three cases it easy to check that
$\Des(g(\pi))=\Des(\pi)$.
\end{proof}

\begin{remark}\label{orb_shuffles}
It follows from Proposition~\ref{J:prop} together with Young's
rule~\cite[Prop. 7.18.7]{Stanley_ECII} that $\Q(\mathcal{J}_n)$ is
given by an alternating sum of permutation representations:
\[
\Q(\mathcal{J}_n)=\ch\left(2\sum\limits_{a=1}^{\lfloor
\frac{n}{2}\rfloor -1}
\left(1\uparrow_{\S_{n-a-1,a,1}}^{\S_n}-1\uparrow_{\S_{n-a-1,a+1}}^{\S_n}\right)+1\uparrow_{\S_{\lfloor\frac{n}{2}\rfloor,\lfloor\frac{n}{2}\rfloor,n
\bmod 2}}^{\S_n} \right).
\]

Similarly, letting $V_i$ be an $i$-dimensional vector space over
$\CC$
and recalling that the exterior algebra $\wedge(V_i)$  is
isomorphic, as an $\S_i$-module,  to the multiplicity-free sum of
all hooks of size $i$~\cite[Ex. 4.6]{Fulton_Harris_book},
Proposition~\ref{K:prop} implies that
\[
\Q(\mathcal{K}_n)=\ch
\left(2\left(\wedge(V_{n-1})\uparrow_{\S_{n-1}}^{\S_n}-\wedge(V_n)\right)+\chi^{(n)}+\chi^{(1^n)}\right).
\]

Computing the corresponding character degrees provides an
alternative proof to Proposition~\ref{KJ:cardinality}.
\end{remark}

\section{Explicit multiplication of one-column grid classes}\label{prod_one_col}

Multiset and set products of one-column grid classes are fine, as can bee seen using
Propositions~\ref{one-column_descent_classes} and~\ref{conj_des_classes} and also the
fact, which appears in the proof of the latter proposition, that set products of inverse descent classes are unions of inverse descent classes.
 Furthermore, it follows from
Proposition~\ref{one-column_descent_classes} and
Corollary~\ref{Solomon_basis} that the multiset product of
one-column grid classes in $\S_n$ is spanned by one-column grid
classes with $\vv\in \{+,-\}^{n-1}$, with nonnegative constants,
which are essentially the structure constants of the Solomon
descent algebra. In this section we give an explicit compact
description of the set product of one-column grid classes and,
more generally, of the set product of a one-column grid class with
an arbitrary grid class.

Given a grid class $\G(M)$, let $M^c$ be the matrix obtained by flipping
$M$ upside down and changing the signs of its entries. Then $\G(M^c)=\{w_0\pi:\pi\in \G(M)\}=\G(M)^\mathrm{ver}$,
the grid class obtained from $\G(M)$ by vertical reflection.

\begin{proposition}\label{prop:product_onecolumnA}
Let $\vv\in\{+,-\}^r$ and $A=\G(M)$ be a grid class. Then
$\{\G^\vv A\}$ is the grid class whose matrix is obtained by
stacking copies of $M$ and $M^c$ on top of each other, where the
$i$th copy from the bottom is $M$ if $v_i=+$ and $M^c$ if
$v_i=-$.
\end{proposition}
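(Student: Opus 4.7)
My plan is to prove the proposition by establishing both set inclusions $\{\G^\vv A\} \subseteq \G(\widetilde M)$ and $\G(\widetilde M) \subseteq \{\G^\vv A\}$, where $\widetilde M$ denotes the stacked matrix. Both will be handled geometrically, by reasoning about drawings of permutations on grids. The key observation is that left multiplication by $\sigma \in \G^\vv$ can be interpreted as a relabeling of values: since $(\sigma\pi)(j)=\sigma(\pi(j))$, the dot at position $j$ with $y$-rank $\pi(j)$ acquires the new $y$-rank $\sigma(\pi(j))$ in the drawing of $\sigma\pi$.

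For the forward inclusion, I would fix $\sigma \in \G^\vv$ and $\pi \in \G(M)$. Any permutation in $\G^\vv$ is determined by sizes $n_1,\dots,n_r$ summing to $n$ and by positions $T_i := \sigma^{-1}(B_i) \subseteq [n]$, where $B_i$ is the $i$-th block of consecutive values of size $n_i$; the restriction $\sigma|_{T_i}$ is the unique order-preserving (if $v_i=+$) or order-reversing (if $v_i=-$) bijection onto $B_i$. A direct computation then shows that the positions in $\sigma\pi$ whose values lie in $B_i$ are exactly $\pi^{-1}(T_i)$, and that the standardization of $\sigma\pi$ at these positions agrees with the standardization of $\pi$ on positions $\pi^{-1}(T_i)$ and values $T_i$ when $v_i=+$, and is its vertical reflection when $v_i=-$. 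Since this is a pattern of $\pi \in \G(M)$, and since the vertical reflection of any element of $\G(M)$ lies in $\G(M^c)$, these subpatterns fit exactly on the $i$-th copy of the stacked grid, yielding a valid drawing of $\sigma\pi$ on $\widetilde M$.

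For the reverse inclusion, I will take a drawing of $\rho \in \G(\widetilde M)$ and ``unstack'' it to produce $\pi$ and $\sigma$. Define $\pi$ by transplanting each dot of $\rho$ from the $i$-th copy down onto a single copy of $M$, preserving its $x$-coordinate and either preserving (if $v_i=+$) or vertically reflecting (if $v_i=-$) its $y$-coordinate relative to the copy; since the vertical reflection of $M^c$ is $M$, all transplanted dots land on segments of $M$, so the resulting drawing defines $\pi \in \G(M)$. Now set $\sigma := \rho\pi^{-1}$, and let $T_i$ denote the set of $y$-ranks that $\pi$ assigns to the transplanted dots from the $i$-th copy. Chasing the construction shows that $\sigma$ bijects $T_i$ onto $B_i$ in the correct monotonic order (increasing if $v_i=+$, decreasing if $v_i=-$), so $\sigma \in \G^\vv$; and $\sigma\pi = \rho$ holds by definition.

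The main obstacle will be the unstacking step: I will need to verify carefully that the transplanted dots produce a genuine drawing on $M$ (resolving any $y$-coordinate collisions between different copies by a small perturbation along the segments), and then to track how the interleaving of $y$-coordinates from different copies determines the sets $T_i$, so as to confirm the monotonicity of $\sigma$ on each $T_i$. A subtlety worth flagging in advance is that the naive guess $T_i := \rho^{-1}(B_i)$ does not work in general: already in simple two-stack examples over $\LLL^2$, this forces $\pi \notin \G(M)$, so the $T_i$ must genuinely come from the geometry of the transplanting rather than directly from $\rho$.
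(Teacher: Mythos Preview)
Your proposal is correct and follows essentially the same geometric approach as the paper: both inclusions are handled by manipulating drawings, with the forward direction distributing the dots of a drawing of $\pi\in\G(M)$ among the stacked copies according to which segment of $\G^\vv$ receives each value, and the reverse direction overlaying the blocks (with reflection where $v_i=-$) to recover the $\G(M)$ factor and then reading off the $\G^\vv$ factor. Your framing via standardization and the sets $T_i=\sigma^{-1}(B_i)$ is a mild repackaging of the paper's sets $J_k$ of positions landing on the $k$th segment, and your flagged subtlety about $y$-coordinate collisions under transplanting (handled by perturbation) is a technical point the paper leaves implicit; otherwise the arguments coincide.
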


An example of a product of the form $\{\G^\vv A\}$ is given in
Figure~\ref{fig:product_onecolumn}. The dots represent the
permutations $\pi=4532617$, $\sigma=4176235$, and their product
$\pi\sigma=2471536$.

\begin{figure}[htb]
\centering
\begin{tikzpicture}[scale=.6]
\draw (0,0) rectangle (6,6); \draw[thick] (6,0)--(0,3)--(6,6);
\draw (7,3) node {$\times$}; \draw (8,0) rectangle (14,6);
\draw[dotted] (8,4)--(14,4); \draw[thick] (6,0)--(0,3)--(6,6);
\draw[thick] (8,0)--(14,2)--(8,4); \draw[thick] (8,6)--(14,4);
\draw (15,3) node {$=$}; \draw (16,0) rectangle (22,6);
\draw[dotted] (16,1)--(22,1); \draw[dotted] (16,5)--(22,5);
\draw[thick] (16,0)--(22,1); \draw[thick] (16,1)--(22,2)--(16,3
)--(22,4)--(16,5); \draw[thick] (16,6)--(22,5);

\def\xa{6/7}
\def\xb{9/7}
\def\xc{15/7}
\def\xd{21/7}
\def\xe{27/7}
\def\xf{33/7}
\def\xg{39/7}
\def\ya{3-\xa/2}
\def\yc{3-\xc/2}
\def\yd{3-\xd/2}
\def\yf{3-\xf/2}
\def\yb{3+\xb/2}
\def\ye{3+\xe/2}
\def\yg{3+\xg/2}
\draw[fill,red] (\xa,\ya) circle (0.1); \draw[fill,blue] (\xb,\yb)
circle (0.1); \draw[fill,red] (\xc,\yc) circle (0.1);
\draw[fill,red] (\xd,\yd) circle (0.1); \draw[fill,blue] (\xe,\ye)
circle (0.1); \draw[fill,red] (\xf,\yf) circle (0.1);
\draw[fill,blue] (\xg,\yg) circle (0.1);

\def\yb{\xb/3}
\def\ye{\xe/3}
\def\ya{4-\xa/3}
\def\yf{4-\xf/3}
\def\yc{6-\xc/3}
\def\yd{6-\xd/3}
\def\yg{6-\xg/3}
\draw[fill,red] (\xa+8,\ya) circle (0.1); \draw[fill,red]
(\xb+8,\yb) circle (0.1); \draw[fill,blue] (\xc+8,\yc) circle
(0.1); \draw[fill,red] (\xd+8,\yd) circle (0.1); \draw[fill,blue]
(\xe+8,\ye) circle (0.1); \draw[fill,red] (\xf+8,\yf) circle
(0.1); \draw[fill,blue] (\xg+8,\yg) circle (0.1);

\def\ya{1+\xa/6}
\def\yb{3-\xb/6}
\def\yc{6-\xc/6}
\def\yd{\xd/6}
\def\ye{3+\xe/6}
\def\yf{1+\xf/6}
\def\yg{6-\xg/6}
\draw[fill,red] (\xa+16,\ya) circle (0.1); \draw[fill,red]
(\xb+16,\yb) circle (0.1); \draw[fill,blue] (\xc+16,\yc) circle
(0.1); \draw[fill,red] (\xd+16,\yd) circle (0.1); \draw[fill,blue]
(\xe+16,\ye) circle (0.1); \draw[fill,red] (\xf+16,\yf) circle
(0.1); \draw[fill,blue] (\xg+16,\yg) circle (0.1);

\end{tikzpicture}
\caption{The product of one-column grid classes.}
\label{fig:product_onecolumn}
\end{figure}
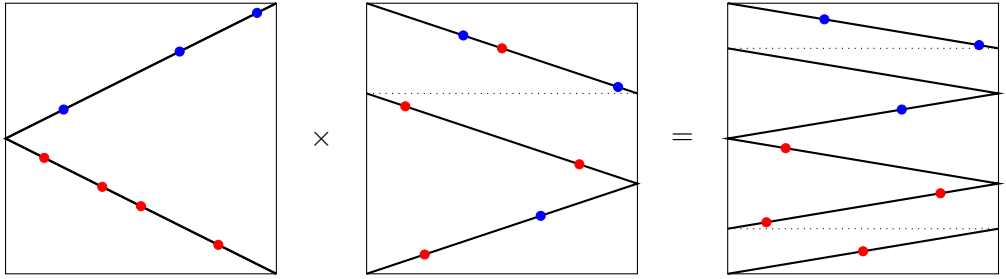

\begin{proof}[Proof of Proposition~\ref{prop:product_onecolumnA}]
Let $M^\vv$ be the matrix described in the statement, obtained by
stacking copies of $M$ and $M^c$ on top of each other according to
the vector $\vv$. The $k$th copy of the grid $\G(M)$ or $\G(M^c)$
in $\G(M^\vv)$ starting from the bottom will be called the $k$th
{\em block} of $\G(M^\vv)$. Similarly, when referring to the $j$th
segment of the grid $\G^\vv$, we consider that the segments are
ordered from bottom to top. Our goal is to show that
$\{\G^\vv\G(M)\}=\G(M^\vv)$.

To prove the inclusion $\{\G^\vv \G(M)\}\subseteq\G(M^\vv)$, let
$\pi\in\G^\vv$ and $\sigma\in \G(M)$, and consider drawings of
these permutations on their corresponding grids. Let us show how
to draw the permutation $\pi\sigma$ on the grid $\G(M^\vv)$. For
every $1\le k\le r$, let $J_k$ be the set of indices $j$ such that
the dot for $\pi(j)$ is on the $k$th segment of the grid
$\G^\vv$. Consider the dots on the grid $\G(M)$ corresponding to entries
$\sigma(i)$  such that $\sigma(i)\in J_k$, and copy these dots to the
$k$th block of the grid $\G(M^\vv)$ in the same $x$-coordinates
that they had in the grid $\G(M)$. If $v_k=-$, this block is a
copy of the grid $\G(M^c)$, in which case the dots are flipped
accordingly. Doing this for every $k$ results in a drawing of the
permutation $\pi\sigma$ on the grid $\G(M^\vv)$. Indeed, let us show that
if $\pi\sigma(i)>\pi\sigma(i')$ then the dot corresponding to $\pi\sigma(i)$ is placed higher than the dot for $\pi\sigma(i')$.
If $\sigma(i)$ and $\sigma(i')$ belong to the same set $J_k$, then $\pi\sigma(i)>\pi\sigma(i')$ is equivalent to $\sigma(i)>\sigma(i')$ if $v_k=+$ (in which case our drawing preserves the relative height of the dots
$\sigma(i)$ and $\sigma(i')$ in $\G(M)$), and to $\sigma(i)<\sigma(i')$ if $v_k=-$ (in which case our drawing reverses the relative height).
If $\sigma(i)$ and $\sigma(i')$ belong to different sets $J_k$ and $J_{k'}$, respectively, then $\pi(\sigma(i))>\pi(\sigma(i'))$ implies that
$k>k'$, and our drawing places $\pi(\sigma(i))$ higher than $\pi(\sigma(i'))$.

Next we show that $\G(M^\vv)\subseteq\{\G^\vv \G(M)\}$. Given a
drawing of a permutation $\tau$ on the grid $\G(M^\vv)$, let
$\sigma$ be the permutation whose drawing is obtained by
overlaying the $r$ blocks which are copies of $\G(M)$ or
$\G(M^c)$, flipping those corresponding to $\G(M^c)$. Then we
construct $\pi$ as follows. For every $i$, if the dot for
$\tau(i)$ is in the $k$th block in the grid $\G(M^\vv)$, draw the dot
for $\pi(j)$, where $j=\sigma(i)$, in the $k$th segment of the grid
$\G^\vv$. One can check that the resulting permutations $\sigma$
and $\pi$ satisfy $\pi\sigma=\tau$.
\end{proof}

One-column grid classes are, up to taking the inverse of its
elements, equivalent to the so-called {\em $W$-sets} considered in
\cite{AMR,AAA}, which also appear in~\cite{AE} under the name
$\sigma$-classes. Compositions (i.e. products) of W-sets were
studied in \cite{AAA}. The special case of
Proposition~\ref{prop:product_onecolumnA} where $A$ is a
one-column grid class $\G^\ww$, stated as
Corollary~\ref{cor:product_onecolumn} below, is equivalent to
\cite[Theorem 6]{AAA}. It gives an explicit description of the set
product of one-column grid classes.

For a vector  $\ww=(w_1,w_2,\dots,w_s)\in\{1,-1\}^s$, we use the
notation $\ww^1=\ww$ and $\ww^{-1}=(-w_s,\dots,-w_2,-w_1)$. Given
$\vv=(v_1,\dots,v_r)\in\{1,-1\}^r$ and $\ww\in\{1,-1\}^s$, define
the product $\vv\star\ww:=(\ww^{v_1},\ww^{v_2},\dots,\ww^{v_r})$.
For example, $(-1,1)\star(1,-1,-1)=(1,1,-1,1,-1,-1)$.

\begin{corollary}[\cite{AAA}]\label{cor:product_onecolumn}
For every $\vv\in\{1,-1\}^r$ and $\ww\in\{1,-1\}^s$,
$$\{\G^\vv\G^\ww\}=\G^{\vv\star\ww}.$$
\end{corollary}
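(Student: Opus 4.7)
The plan is to derive the corollary as an immediate specialization of Proposition~\ref{prop:product_onecolumnA}. Take $A = \G^\ww$, so that its defining matrix $M$ is the single column whose entries, read from bottom to top, are $w_1, w_2, \ldots, w_s$.

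First I would unpack the definition of $M^c$: flipping $M$ upside down and negating its entries produces the column whose entries from bottom to top are $-w_s, -w_{s-1}, \ldots, -w_1$. In the compact vector notation for one-column grids introduced in Section~\ref{sec:one-col}, this column encodes the vector $\ww^{-1}$, while $M$ itself encodes $\ww^{1} = \ww$. In other words, $\G(M) = \G^{\ww^1}$ and $\G(M^c) = \G^{\ww^{-1}}$.

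Next I would invoke Proposition~\ref{prop:product_onecolumnA}. It asserts that the matrix for $\{\G^\vv A\}$ is obtained by stacking, from bottom to top, copies of $M$ and $M^c$, where the $i$th block is $M$ if $v_i = 1$ and $M^c$ if $v_i = -1$. Since both $M$ and $M^c$ are single columns, the resulting matrix is still a single column, and the entries of the $i$th block read from bottom to top form exactly $\ww^{v_i}$. Concatenating from bottom to top yields a column encoding the vector $(\ww^{v_1}, \ww^{v_2}, \ldots, \ww^{v_r})$, which is precisely the definition of $\vv\star\ww$. Therefore $\{\G^\vv\G^\ww\} = \G^{\vv\star\ww}$.

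The only thing to watch is the orientation bookkeeping: I need to check that ``flip upside down and negate'' (the definition of $M^c$) really does match the involution $\ww \mapsto \ww^{-1}$ as defined in the statement of the corollary. This is the single concrete verification in the argument, and it can be checked against the example $(-1,1)\star(1,-1,-1)=(1,1,-1,1,-1,-1)$ given just before the corollary. Once this identification is established, nothing further is required.
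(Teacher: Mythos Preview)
Your proposal is correct and follows exactly the approach intended by the paper: the corollary is stated as the special case of Proposition~\ref{prop:product_onecolumnA} with $A=\G^\ww$, and the paper gives no separate proof beyond that remark. Your unpacking of the definitions---in particular the identification of $M^c$ with the column encoding $\ww^{-1}$ and the verification that stacking produces the concatenation $(\ww^{v_1},\ldots,\ww^{v_r})=\vv\star\ww$---is precisely the bookkeeping one needs, and it is carried out correctly.
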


Figure~\ref{fig:product_onecolumn} shows how
$\G^{-+}\G^{+--}=\G^{++-+--}$ as sets.

\begin{remark} Even if $A$ is a fine grid class, it is not necessarily true that $\{\G^\vv A\}$ is a fine set. For example, if $A=\LLL^2$ and $\vv=-+$, we have that
$$\{\G^\vv A\}=\{\LL \LLL^2\}=\G\left(\begin{array}{cc}
           1 & 0 \\
           0 & 1 \\
           0 & -1 \\
           -1 & 0
         \end{array}\right),$$
which is not a fine set (or even symmetric) already for $n=6$.

On the other hand, the question of whether $\G^\vv A$ is always a
fine multiset remains open. An affirmative answer would follow
from Conjecture~\ref{conj:desfine}.
\end{remark}

\section{Final remarks and open problems}\label{sec:open}

We have shown in Corollary~\ref{cor:rotated_shuffles2} that vertically
rotated one-column grid classes are fine when all slopes
have the same sign, that is, $\{C_n\G_n^\vv\}$ is a fine set when
$\vv=+^k$ or $\vv=-^k$ (the latter case follows by symmetry using Propositon~\ref{prop_reflections}). By
Propositions~\ref{prop:rotated_leftunimodal}
and~\ref{prop_reflections}, this phenomenon also holds when
$\vv=-+$ or $\vv=+-$. Computer experiments suggest that the
following more general statement is true.

\begin{conjecture}\label{conj:rotations-onecolumn}
For every one-column grid class $\G^\vv$, the set $\{C_n\G_n^\vv\}$ is a fine set.
\end{conjecture}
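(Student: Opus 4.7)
The approach is to decompose $\{C_n\G_n^\vv\}$ into classes indexed by the necklace type of $\cDes(\pi^{-1})$ and then to prove that each such class is a fine set. The decomposition itself follows from the characterization of $\G_n^\vv$ as a union of inverse descent classes (Proposition~\ref{one-column_descent_classes}) together with the standard behaviour of $\Des$ under horizontal rotation (as used in the proof of Proposition~\ref{prop:rotated_shuffles} and in the discussion before Corollary~\ref{cor:cyc_fine}): a permutation $\pi$ lies in $\{C_n\G_n^\vv\}$ if and only if the length-$n$ cyclic word encoding $\cDes(\pi^{-1})$ admits some linear reading of length $n-1$, obtained by cutting at a single position and reading cyclically, that avoids $\vv$ as a subsequence. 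Since this property depends only on the necklace $[\cDes(\pi^{-1})]$, writing $\mathcal{E}_{[E]}:=\{\pi\in\S_n:[\cDes(\pi^{-1})]=[E]\}$, we obtain $\{C_n\G_n^\vv\}=\bigsqcup_{[E]\text{ good}}\mathcal{E}_{[E]}$.

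The core of the proof is then to establish, for every necklace orbit $[E]$, that $\mathcal{E}_{[E]}$ is fine; this is a genuine refinement of Corollary~\ref{cor:cyc_fine}, which only gives fineness of the coarser unions $\{\pi:\cdes(\pi^{-1})=k\}=\bigsqcup_{|E|=k}\mathcal{E}_{[E]}$.  The plan is to pick the unique representative $E$ in $[E]$ with $n\in E$, to set $J:=E\setminus\{n\}\subseteq[n-2]$, and to extract from the bijection of Lemma~\ref{lemma:horizontal2} (which treats $R_{n-1,J}^{-1}C_n$ as a whole) the sub-bijection whose image is the subset of $\SYT(L_{n,J})$ of a prescribed ``cyclic-descent type''.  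By Lemma~\ref{eq_skew_induced} together with Proposition~\ref{G1}, each resulting piece is fine for an induced character of the form $1_{\S_{\bar J}}\uparrow^{\S_n}$, and Observation~\ref{union} then assembles these pieces across the $\vv$-good necklaces to give fineness of $\{C_n\G_n^\vv\}$ for the corresponding sum of induced characters.

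The main obstacle is this refinement. Lemma~\ref{lemma:horizontal2} treats $R_{n-1,J}^{-1}C_n$ as a whole, but one checks directly that this horizontally rotated class splits across more than one necklace orbit; for instance, already for $n=4$ and $J=\{1\}$, the set $D_{3,\{1\}}^{-1}C_4$ spans both $[\{1,2\}]$ and $[\{1,3\}]$.  The reason is that the decomposition $\pi=\sigma c^j$ encodes a horizontal rotation of $\sigma$, whereas the necklace type of $\cDes(\pi^{-1})$ is controlled by the vertical rotation $c^{-j}\sigma^{-1}$, and by the analysis underlying Lemma~\ref{lem:cdes_rotations} a vertical rotation changes $\cDes$ only locally, in the vicinity of the position occupied by $n$.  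A careful partition of $R_{n-1,J}^{-1}C_n$ by the position of $n$ (equivalently, by $j$) should separate the necklaces, after which each part is to be matched with the corresponding refinement of $\SYT(L_{n,J})$.  If this can be carried out, it would also settle the companion Conjecture~\ref{Conj_CD_n-1} about vertical rotations of inverse descent classes in $\S_{n-1}$.
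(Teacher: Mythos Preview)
This statement is left open in the paper (Section~\ref{sec:open}); no proof is given, only the remark that the cases $\vv\in\{+^k,-^k,-+,+-\}$ follow from earlier results. So there is no paper proof to compare against, and the question is whether your proposal actually establishes the conjecture.

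Your decomposition in the first paragraph is correct: since $\Des(\pi^{-1}c^k)=(\cDes(\pi^{-1})-k)\setminus\{n\}$, membership in $\{C_n\G_n^\vv\}$ depends only on the necklace $[\cDes(\pi^{-1})]$, and it would indeed suffice to show that each $\mathcal{E}_{[E]}$ is fine. But your second paragraph does not prove this. A minor issue is that the representative $E$ with $n\in E$ is not unique (there are $|E|$ such rotations in the aperiodic case). The real gap is your appeal to Lemma~\ref{eq_skew_induced} and Proposition~\ref{G1}: those show that the \emph{full} set $\SYT(L_{n,J})$ is fine for $1_{\S_{\bar J}}\uparrow^{\S_n}$, but say nothing about a proper subset cut out by a cyclic-descent condition. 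A $\Des$-preserving bijection from $\mathcal{E}_{[E]}$ onto some subset of $\SYT(L_{n,J})$ does not by itself yield Schur-positivity of $\Q(\mathcal{E}_{[E]})$. Your third paragraph effectively concedes this, correctly identifying the horizontal/vertical mismatch between $\sigma c^j$ and $c^{-j}\sigma^{-1}$ as the obstruction and ending with ``if this can be carried out''.

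In summary, you have reduced Conjecture~\ref{conj:rotations-onecolumn} to the claim that each inverse cyclic-descent necklace class $\mathcal{E}_{[E]}$ is fine. This is an attractive refinement of Corollary~\ref{cor:cyc_fine}, and as you note it would also settle Conjecture~\ref{Conj_CD_n-1}, but it is itself open and not a consequence of anything in the paper. What you have is a plausible outline, not a proof.
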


The following conjecture suggests a far-reaching generalization of
Corollary~\ref{cor:LC=CL}. Recall that one can interpret $D_{n-1,J}^{-1}$ as a subset of $\S_n$
consisting of permutations that fix $n$.

\begin{conjecture}\label{Conj_CD_n-1}
For every  $J\subseteq [n-2]$,
\[
\Q(C_n D_{n-1,J}^{-1})=\Q(D_{n-1,J}^{-1}C_n).
\]
Thus, by Theorem~\ref{thm:horizontal1}, $C_n D_{n-1,J}^{-1}$ is
a fine set for $S^{Z_{n-1,J}}\uparrow^{\S_n}$.
\end{conjecture}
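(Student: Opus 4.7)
The plan is to mirror the proof of Theorem~\ref{thm:horizontal1} by first establishing an analogous equality for $R$-sets and then passing to $D$-sets via M\"obius inversion. Concretely, I would first aim to prove the stronger statement
\[
\Q(C_n R_{n-1,J}^{-1}) = \Q(R_{n-1,J}^{-1} C_n) \qquad \text{for every } J \subseteq [n-2].
\]
By Corollary~\ref{cor:horizontal3}, the right-hand side equals $\ch(1_{\S_{\bar J}} \uparrow^{\S_n})$, so this reduces to showing that $C_n R_{n-1,J}^{-1}$ is fine for the same induced representation. The natural route is to construct a descent-preserving bijection from $C_n R_{n-1,J}^{-1}$ to $\SYT(L_{n,J})$ paralleling the one in Lemma~\ref{lemma:horizontal2}, or to any skew shape with the same Frobenius image.

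Once this $R$-version is in place, the conjecture follows quickly. Using the decomposition $R_{n-1,I}^{-1} = \bigsqcup_{K \subseteq I} D_{n-1,K}^{-1}$ together with M\"obius inversion on the Boolean lattice of subsets of $J$, one obtains the formal identity $D_{n-1,J}^{-1} = \sum_{I \subseteq J} (-1)^{|J \setminus I|} R_{n-1,I}^{-1}$. Applying $\Q(C_n \cdot)$ termwise and invoking the $R$-version yields
\[
\Q(C_n D_{n-1,J}^{-1}) = \sum_{I \subseteq J} (-1)^{|J \setminus I|} \Q(C_n R_{n-1,I}^{-1}) = \sum_{I \subseteq J} (-1)^{|J \setminus I|} \Q(R_{n-1,I}^{-1} C_n) = \Q(D_{n-1,J}^{-1} C_n),
\]
which, combined with Theorem~\ref{thm:horizontal1}, also identifies the common value as $\ch(S^{Z_{n-1,J}} \uparrow^{\S_n})$.

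The hard part is constructing the bijection in the $R$-version. The difficulty is that left multiplication by $c^k$ cyclically shifts the \emph{values} of $\sigma$, whereas the skew shape $L_{n,J}$ naturally encodes cyclic shifts of \emph{positions} (as exploited in Lemma~\ref{lemma:horizontal2}). A first candidate sends $\rho = c^k \sigma$ to a SYT of shape $(Z_{n-1,J}, 1)$ whose disconnected cell is filled with $v := \rho^{-1}(n)$ and whose ribbon is obtained by shifting the entries of the ribbon tableau associated with $\sigma$. Small computations ($n = 4$, $J = \{1\}$) confirm this rule, but further experiments indicate that for general $J$ the ribbon entries may require an additional rearrangement beyond a simple shift, suggesting that the correct map involves an insertion-style cyclic rearrangement, possibly a variant of RSK. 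A complementary route is representation-theoretic: verify that $C_n D_{n-1,J}^{-1}$ is fine for $S^{Z_{n-1,J}} \uparrow^{\S_n}$ directly via the character criterion of Theorem~\ref{m2}, matching the sign-weighted count $\sum_{\rho \in (C_n D_{n-1,J}^{-1})_\mu} (-1)^{|\Des(\rho) \setminus S(\mu)|}$ against the Murnaghan--Nakayama expansion of $s_1 \cdot s_{Z_{n-1,J}}$ on a conjugacy class of type $\mu$.
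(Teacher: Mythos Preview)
The statement you are attempting is labeled and treated in the paper as a \emph{conjecture}; the authors do not supply a proof, so there is nothing to compare your attempt against. Your proposal is likewise not a proof but a strategy, and you yourself flag the unresolved step.

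Your reduction from $D$-sets to $R$-sets is sound. Since every $\pi\in\S_n$ factors uniquely as $c^k\sigma$ with $\sigma\in\S_{n-1}$ (the value $\pi(n)$ determines $k$), the sets $C_n D_{n-1,I}^{-1}$ for distinct $I\subseteq J$ are pairwise disjoint, and hence $\Q(C_n R_{n-1,J}^{-1})=\sum_{I\subseteq J}\Q(C_n D_{n-1,I}^{-1})$. The M\"obius inversion you describe then legitimately reduces the conjecture to the equality $\Q(C_n R_{n-1,J}^{-1})=\Q(R_{n-1,J}^{-1}C_n)$.

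The gap is exactly where you locate it: you do not construct the required $\Des$-preserving bijection from $C_n R_{n-1,J}^{-1}$ to $\SYT(L_{n,J})$, and the obstruction you name --- that left multiplication by $c^k$ shifts values rather than positions, so the mechanism of Lemma~\ref{lemma:horizontal2} does not transfer --- is precisely why the authors leave the statement open. Your candidate map (placing $\rho^{-1}(n)$ in the isolated box and shifting the rest) and the alternative character-formula route through Theorem~\ref{m2} are both plausible avenues, but neither is carried through. In summary, your outline is a correct and natural reformulation of the problem, but the essential difficulty remains untouched, which is the same state the paper leaves it in.
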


Note that
both $C_n D_{n-1,J}^{-1}$ are $D_{n-1,J}^{-1}C_n$ are sets (that
is, elements have multiplicity one).
Conjecture~\ref{Conj_CD_n-1} no longer holds when
$C_n$ is replaced by a general fine set $\BBB\subset \S_n$.

\medskip

The following generalization of
Theorem~\ref{thm:horizontal1} and Corollary~\ref{cor:horizontally_rotated_colayered} will be proved in a forthcoming
paper~\cite{ER_new}.

\begin{theorem}\label{thm:horizontal_induction}
If $\BBB\subseteq \S_{n-1}$ is a fine set for the
$\S_{n-1}$-representation $\rho$, then $\BBB C_n$ is a fine set for
$\rho\uparrow^{\S_n}$.
\end{theorem}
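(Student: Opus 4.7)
Our goal is to establish that $\Q(\BBB C_n)=\ch(\rho)\cdot s_1$, since $\ch(\chi\uparrow_{\S_{n-1}}^{\S_n})=\ch(\chi)\cdot s_1$ for every $\S_{n-1}$-character $\chi$ (the characteristic map carries induction to multiplication by $s_1$); by Remark~\ref{rem:Cn}, $\BBB C_n$ is automatically a set (not a proper multiset), so this identification suffices.

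The starting point, already observed in the proof of Corollary~\ref{cor:horizontally_rotated_colayered}, is that for every $\pi\in\S_{n-1}$ and $0\le j<n$, the descent set $\Des(\pi c^{-j})$ depends only on $\Des(\pi)$ and~$j$. This lets me define a $\mathbb{Z}$-linear map $\Phi\colon\text{QSym}_{n-1}\to\text{QSym}_n$ by
\[
\Phi(F_{n-1,D}):=\sum_{j=0}^{n-1} F_{n,\Des(\pi c^{-j})},\qquad \text{for any $\pi\in\S_{n-1}$ with $\Des(\pi)=D$},
\]
which by linearity satisfies $\Phi(\Q(A))=\Q(AC_n)$ for every (multi)set $A\subseteq\S_{n-1}$. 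Specializing $A=D_{n-1,J}^{-1}$ and combining Lemma~\ref{lem:DJ} with Theorem~\ref{thm:horizontal1} yields
\[
\Phi(s_{Z_{n-1,J}})=\Q(D_{n-1,J}^{-1}C_n)=\ch(S^{Z_{n-1,J}}\uparrow^{\S_n})=s_{Z_{n-1,J}}\cdot s_1,\qquad J\subseteq[n-2].
\]

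To conclude I would show that the ribbon Schur functions $\{s_{Z_{n-1,J}}:J\subseteq[n-2]\}$ span $\Lambda^{n-1}$. By Lemma~\ref{lem:DJ}, $s_{Z_{n-1,J}}=\sum_{\lambda\vdash n-1}M_{\lambda,J}\,s_\lambda$ with $M_{\lambda,J}:=|\{P\in\SYT(\lambda):\Des(P)=J\}|$, and the same matrix $M$ also encodes the fundamental expansion $s_\lambda=\sum_J M_{\lambda,J}\, F_{n-1,J}$. Linear independence of Schur functions in $\text{QSym}_{n-1}$, together with that of the fundamentals, forces $M$ to have full row rank $p(n-1)=\dim\Lambda^{n-1}$, so its columns span $\Lambda^{n-1}$. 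It then follows that the restriction of $\Phi$ to $\Lambda^{n-1}$ is multiplication by $s_1$, and applied to $\Q(\BBB)=\ch(\rho)\in\Lambda^{n-1}$ it gives $\Q(\BBB C_n)=\ch(\rho)\cdot s_1=\ch(\rho\uparrow^{\S_n})$. The main obstacle is the spanning statement; once the descent-distribution invariance of $\Q(\,\cdot\, C_n)$ is packaged as the linear map~$\Phi$ and Theorem~\ref{thm:horizontal1} is invoked for all $J$, the remainder reduces to the elementary linear algebra fact recorded above.
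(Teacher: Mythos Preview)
Your argument is correct. Note, however, that the paper does not actually prove Theorem~\ref{thm:horizontal_induction}: it is stated in Section~\ref{sec:open} with the comment that it ``will be proved in a forthcoming paper~\cite{ER_new}.'' So there is no proof in the present paper to compare against.

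That said, your approach is natural and efficient given the tools already developed here. The key observation---already made in the proof of Corollary~\ref{cor:horizontally_rotated_colayered}---that $\Des(\pi c^{-j})$ depends only on $\Des(\pi)$ and $j$ lets you package the map $A\mapsto\Q(AC_n)$ as a $\ZZ$-linear operator $\Phi$ on quasisymmetric functions. Theorem~\ref{thm:horizontal1} then pins down $\Phi$ on the ribbon Schur functions $s_{Z_{n-1,J}}$, and your rank argument (row rank equals column rank for the matrix $M_{\lambda,J}$) correctly shows these span $\Lambda^{n-1}$, so $\Phi$ restricted to $\Lambda^{n-1}$ is multiplication by $s_1$. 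All steps are sound. One could alternatively cite the classical fact that ribbon Schur functions span $\Lambda^{n-1}$ directly (e.g., via the Jacobi--Trudi-type determinant expressing each $s_\lambda$ as an integer combination of ribbon Schur functions), but your self-contained derivation from Lemma~\ref{lem:DJ} and Proposition~\ref{G1} is equally valid and keeps the argument internal to the paper's framework.
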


It should be noted that an analogous statement for vertical
rotation does not hold. For example,
$\{2143, 2413\}$ is a Knuth class in $\S_4$, thus fine, but
$C_5\{2143, 2413\}$ is not fine.

\medskip

Regarding multiset products of fine sets and inverse descent classes, computer experiments support the
following conjecture.

\begin{conjecture}\label{conj:desfine}
Let $\BBB\subset \S_n$ be fine. Then, for every $J\subseteq
[n-1]$,
\[
\Q(D_{n,J}^{-1} \BBB)=\Q(\BBB D_{n,J}^{-1}).
\]
In particular, by Theorem~\ref{main_FD}, the multiset
$D_{n,J}^{-1} \BBB$ is fine.
\end{conjecture}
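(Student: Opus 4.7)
The plan is to deduce Conjecture~\ref{conj:desfine} from a left-multiplication analog of Theorem~\ref{main}. Specifically, I claim that if $\BBB$ is fine for the $\S_n$-representation $\rho$, then $D_{n,J}^{-1}\BBB$ is a fine multiset for $S^{Z_{n,J}}\otimes \rho$. Granting this, commutativity of the Kronecker product of characters yields
\[
\Q(D_{n,J}^{-1}\BBB)=\ch(\chi^{Z_{n,J}}\otimes \chi^\rho)=\ch(\chi^\rho\otimes \chi^{Z_{n,J}})=\Q(\BBB D_{n,J}^{-1}),
\]
where the last equality is part~2 of Theorem~\ref{main}. This would prove the conjecture and simultaneously confirm that $D_{n,J}^{-1}\BBB$ is fine.

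To prove the left analog, I would follow the three-step outline of the proof of Theorem~\ref{main}. First, one establishes the ``unsharp'' identity $\Q(R_{n,J}^{-1}\BBB)=\Q(\BBB R_{n,J}^{-1})$, or equivalently that $\Q(R_{n,J}^{-1}\BBB)$ is the Frobenius image of $(\rho\downarrow_{\S_{\bar J}})\uparrow^{\S_n}\cong \rho\otimes (1_{\S_{\bar J}}\uparrow^{\S_n})$. Second, the same inclusion-exclusion as in Equation~\eqref{eq:QBD} passes from $\{\Q(R_{n,I}^{-1}\BBB)\}_{I\subseteq J}$ to $\Q(D_{n,J}^{-1}\BBB)$. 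Third, Equation~\eqref{eq:altsum} combined with Frobenius reciprocity identifies the resulting virtual representation as $S^{Z_{n,J}}\otimes\rho$, which is the desired left analog.

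The main obstacle lies in the first step. In the proof of Theorem~\ref{main}, fineness of $\BBB R_{n,J}^{-1}$ came from a clean combinatorial interpretation: it is the union over $\pi\in\BBB$ of all shuffles of the consecutive segments $\pi(1),\ldots,\pi(j_1);\ \pi(j_1+1),\ldots,\pi(j_2);\ \ldots\ ;\ \pi(j_t+1),\ldots,\pi(n)$, after which Lemma~\ref{lemma_shuffles} applies. Left multiplication has no analogous shuffle description: for $\tau\in R_{n,\{k\}}^{-1}$ determined by the $k$-subset $A=\tau^{-1}([k])\subseteq[n]$, the product $\tau\sigma$ is obtained from $\sigma$ by a \emph{standardized relabeling of values}, sending values of $\sigma$ lying in $A$ order-preservingly to $[1,k]$ and values lying in $[n]\setminus A$ order-preservingly to $[k+1,n]$. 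This operation modifies values rather than positions, and its effect on the descent set does not factor across blocks in the way needed for a direct shuffle argument.

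A promising alternative is to bypass the shuffle step and work directly with characters via Theorem~\ref{m2}: it would suffice to verify that for every $\mu\vdash n$,
\[
\sum_{\substack{\sigma\in\BBB,\,\tau\in D_{n,J}^{-1}\\ \tau\sigma\text{ is }\mu\text{-modal}}}(-1)^{|\Des(\tau\sigma)\setminus S(\mu)|}=\chi^\rho(\mu)\,\chi^{Z_{n,J}}(\mu),
\]
since by part~2 of Theorem~\ref{main} applied to $\BBB$ and to $D_{n,J}^{-1}$ the right-hand side already equals the corresponding sum over pairs $(\sigma,\tau)$ for which $\sigma\tau$ is $\mu$-modal. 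Because $\sigma\tau$ and $\tau\sigma$ are always conjugate they share cycle type, providing a natural rigidity; the difficulty is that $\mu$-modality is a condition on the descent set, not on the cycle type. A further possible reduction is to invoke Theorem~\ref{m1}(iii) to replace $\BBB$ by a disjoint union of sets in $\Des$-preserving bijection with Knuth classes, reducing to the case $\BBB={\mathcal C}_T$, though proving that such a replacement preserves the descent distribution after left-multiplication by $D_{n,J}^{-1}$ is essentially equivalent to the conjecture itself, so the character identity above must ultimately be established by a new argument.
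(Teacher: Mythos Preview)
The statement you are addressing is Conjecture~\ref{conj:desfine}, which the paper presents as an \emph{open conjecture} supported by computer experiments; there is no proof in the paper to compare against. Your write-up is not a proof but rather a careful analysis of why the argument for Theorem~\ref{main} does not transfer to left multiplication, together with some possible reformulations.

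Your structural outline is sound: if one could show that $R_{n,J}^{-1}\BBB$ is fine for $(\rho\downarrow_{\S_{\bar J}})\uparrow^{\S_n}$, then inclusion--exclusion and Equation~\eqref{eq:altsum} would indeed yield the left analog of Theorem~\ref{main}(2), and commutativity of the Kronecker product would finish. You have also correctly located the obstruction. In the right-multiplication case, the identity $\Q(\BBB R_{n,J}^{-1})=\ch\bigl((\rho\downarrow_{\S_{\bar J}})\uparrow^{\S_n}\bigr)$ rests on two facts: the product $\pi R_{n,\{k\}}^{-1}$ is literally the set of shuffles of two segments of $\pi$, and by Proposition~\ref{Stanley_lemma} the descent distribution of such shuffles depends only on $\Des(\pi)$. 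In particular, $\Q(\BBB D_{n,J}^{-1})$ depends only on $\Q(\BBB)$, which is what makes the reduction to Knuth classes via Theorem~\ref{m1}(iii) legitimate on the right. For left multiplication there is no analog of Proposition~\ref{Stanley_lemma}: the operation $\pi\mapsto\tau\pi$ for $\tau\in R_{n,\{k\}}^{-1}$ relabels \emph{values}, and the resulting descent set depends on the full permutation $\pi$, not merely on $\Des(\pi)$. Consequently, as you note, replacing $\BBB$ by a $\Des$-equidistributed set is not a priori justified, and the reduction to Knuth classes is circular.

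Your alternative via Theorem~\ref{m2} is a correct reformulation but not a reduction: the displayed character identity is equivalent to the conjecture, and the observation that $\sigma\tau$ and $\tau\sigma$ are conjugate does not constrain their descent sets. In short, you have accurately diagnosed why the existing machinery fails and what would need to be proved, but no step in your proposal closes the gap. The conjecture remains open.
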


Note that when $\BBB=C_n$, the conjecture
involves horizontal and vertical rotations of an inverse descent
class.

\medskip

Given a pair of Knuth classes $A$ and $B$  in $\S_n$ of shapes
$\lambda$ and $\mu$, in many cases the multiset $AB$ is fine for
the tensor product $\chi^\lambda\otimes \chi^\mu$. Equivalently,
we have that $\Q(AB)=\Q(A)*\Q(B)$, where $*$ denotes the Kronecker
product. For example, consider the Knuth class
$A=\{21435,21453,24135,24153,24513\}\subseteq \S_5$, which
satisfies $\Q(A)=s_{3,2}$. Then $A^2=\{A^2\}$ and
$$
\Q(A^2)=s_5+s_{4, 1}+s_{3, 2}+s_{3, 1, 1}+s_{2, 2, 1}+s_{2, 1, 1,
1}=\ch(\chi^{3,2}\otimes \chi^{3,2})=s_{3,2}*s_{3,2}.
$$
The identity $\Q(AB)=\Q(A)*\Q(B)$ holds for all Knuth classes $A$
and $B$ in $\S_4$, but not in general. For
$n\ge5$, there are pairs of Knuth classes $A,B\in\S_n$ for which
$\Q(AB)$ is not even symmetric.

\begin{question} For which pairs of Knuth classes are their set and multiset products fine?
\end{question}

\medskip

We conclude with a natural question regarding restriction of
Schur-positive grids.

\begin{question}
Let $\G$ be a grid class. Does $\Q(\G_n)$ being Schur-positive
imply that $\Q(\G_{n-1})$ is Schur-positive?
\end{question}

\medskip

\noindent{\bf Acknowledgements.} The authors thank Ron Adin,
Michael Albert, Christos Athanasiadis, Mike Atkinson, Zach Hamaker and Bruce Sagan for useful discussions,
comments and references. The authors also thank two anonymous referees for their thorough comments that have improved the presentation.
The first author was partially supported by grant \#280575 from the Simons Foundation and by grant H98230-14-1-0125 from the NSA.
The second author was partially supported by Dartmouth's Shapiro visitors fund.


\begin{thebibliography}{}

\bibitem{CA} R.\ M.\ Adin, C.\ Athanasiadis, S.\ Elizalde and Y.\
Roichman, {\em Character formulas and descents for the
hyperoctahedral group}. Preprint, \href{http://arxiv.org/abs/1504.01283}{arXiv:1504.01283}.

\bibitem{TFT2}  R.\ M.\ Adin and Y.\ Roichman, {\em Triangle-free triangulations,
hyperplane arrangements and shifted tableaux}. Electron. J.
Combin.~{\bf 19}, Paper 32, 19 pp. (2012).

\bibitem{Adin-R} R.\ M\ Adin and Y.\ Roichman, {\em Matrices, characters and
descents}. Linear Algebra Appl.~{\bf 469} (2015), 381--418.

\bibitem{AAA} M.\ H.\ Albert, R.\ E.\ L.\ Aldred, M.\ D.\ Atkinson,  H.\ P.\ van Ditmarsch, C.\ C.\ Handley, D.\ A.\ Holton and D.\ J.\ McCaughan, {\em Compositions of pattern restricted sets of permutations}. Australian J. Combinatorics {\bf 37} (2007), 43--56.

\bibitem{AABRV} M.\ H.\ Albert, M.\ D.\ Atkinson, M.\ Bouvel, N.\ Ruskuc and V.\ Vatter,
{\em Geometric grid classes of permutations}. Trans.\ AMS~{\bf
365} (2013), 5859--5881.

\bibitem{AMR} M.\ D.\ Atkinson, M.\ M.\ Murphy and N.\ Ru\v{s}kuc, {\em Partially well-ordered closed sets of permutations}, Order {\bf 19} (2002), 101--113.

\bibitem{AE} K.\ Archer and S.\ Elizalde, {\em Cyclic permutations realized by signed shifts}. J. Comb. 5 (2014), 1--30.

\bibitem{BV} D.\ Barbasch and D.\ Vogan, {\em Primitive ideals and orbital integrals in complex
exceptional groups}, J.\ Algebra~{\bf 80} (1983), 350--382.

\bibitem{Cellini} P.\ Cellini,
{\em Cyclic Eulerian elements}, European J.\ Combin.~{\bf 19}
(1998), 545–-552.

\bibitem{Dilks} K.\ Dilks, T.\ K.\ Petersen and J.\ R.\  Stembridge,
{\em Affine descents and the Steinberg torus}, Adv.\ in Appl.\
Math.~{\bf 42} (2009), 423-–444.

\bibitem{ER1} S.\ Elizalde and Y.\ Roichman, {\em Arc permutations}. J. Algebraic Combin.~{\bf 39} (2014), 301--334.

\bibitem{ER_new} S.\ Elizalde and Y.\ Roichman, {\em On rotated Schur-positive sets}. In preparation.

\bibitem{Fulton_Harris_book} W.\ Fulton and J.\ Harris, Representation Theory. A First Course. Graduate Texts in Mathematics, vol.
129. Springer, New York (1991).

\bibitem{Garsia-Remmel} A.\ M.\ Garsia and J.\ Remmel, {\em Shuffles of permutations and the Kronecker product}.
Graphs Combin.~{\bf 1} (1985), 217--263.

\bibitem{Gessel} I.\ M.\ Gessel, {\em Multipartite $P$-partitions and inner products of Schur
functions}. Contemp.\ Math.~{\bf 34} (1984), 289--302.

\bibitem{Gessel-Reutenauer} I.\ M.\ Gessel and C.\ Reutenauer,
{\em Counting permutations with given cycle structure and descent
set}, J.\ Combin.\ Theory Ser.\ A~{\bf  64} (1993), 189--215.

\bibitem{Goulden} I.\ P.\ Goulden,
{\em A bijective proof of Stanley's shuffling theorem}. Trans.\
Amer.\ Math.\ Soc.~{\bf 288} (1985), 147--160.

\bibitem{Md} I.\ G.\ Macdonald,
Symmetric Functions and Hall Polynomials, second ed., Oxford Math.
Monographs, Oxford Univ. Press, Oxford, 1995.


\bibitem{Petersen}
T.\ K.\ Petersen, {\em Cyclic descents and $P$-partitions}, J.\
Algebraic Combin.~{\bf 22} (2005), 343--375.

\bibitem{Reutenauer} C.\ Reutenauer, Free Lie algebras. London
Mathematical Society Monographs. Oxford University Press, New
York, 1993.

\bibitem{R_ind} Y.\ Roichman,
{\em Induction and restriction of Kazhdan-Lusztig cells}. Adv.\
Math.~{\bf 134} (1998), 384--398.

\bibitem{Sagan_book} B.\ E.\ Sagan, The symmetric group. Representations, combinatorial
algorithms, and symmetric functions, Second edition, Graduate
Texts in Mathematics, 203, Springer-Verlag, New York, 2001.

\bibitem{Sagan_talk} B.\ E.\ Sagan, {\em Pattern avoidance and
quasisymmetric functions}, The 13th International Permutation
Patterns conference, London, UK, 2015. \url{https://sites.google.com/site/pp2015london/slides}

\bibitem{Serre} J.\ P.\ Serre, Linear Representations of Finite Groups. Springer-Verlag, New
York / Berlin, 1977.

\bibitem{Solomon} L.\ Solomon, {\em The orders of the finite Chevalley groups}. J.\ Algebra~{\bf
3} (1966), 376-–393.

\bibitem{Solomon_algebra} L.\ Solomon, {\em A Mackey formula in the group ring of a Coxeter
group}. J.\ Algebra~{\bf 41} (1976), 255--264.

\bibitem{Stadler} J.\ D.\ Stadler, {\em Stanley's shuffling theorem
revisited}. J.\ Combin.\ Theory Ser.\ A~{\bf 88} (1999), 176--187.

\bibitem{Stanley} R.\ P.\ Stanley, {\em Some aspects of group acting on finite posets}. J.\
Combin.\ Theory Ser.\ A~{\bf 32} (1982), 132--161.

\bibitem{ECI} R.\ P.\ Stanley, Enumerative Combinatorics Vol. I. Second edition. Cambridge Studies in Adv.\ Math. 49. Cambridge Univ.\ Press, Cambridge, 2012.

\bibitem{Stanley_ECII}
R.\ P.\ Stanley, Enumerative combinatorics, Vol.\ 2, Cambridge
Studies in Adv.\ Math. 62. Cambridge Univ.\ Press,
Cambridge, 1999.

\bibitem{Stanley_thesis} R.\ P.\ Stanley, Ordered structures and partitions (revision of 1971 Harvard University thesis).  Memoirs of the Amer. Math. Soc. {\bf 119} (1972).

\bibitem{Stanley_problems}
R.\ P.\ Stanley, {\it Positivity problems and conjectures in
algebraic combinatorics}, in: Mathematics: Frontiers and
Perspectives (V.\ Arnold, M.\ Atiyah, P.\ Lax, and B.\ Mazur,
eds.), American Mathematical Society, Providence, RI, 2000, pp.
295--319.

\end{thebibliography}
\end{document}